\date{\today}
\newtheorem{thm}{Theorem}[section]
\newtheorem{lem}[thm]{Lemma}
\newtheorem{prop}[thm]{Proposition}
\theoremstyle{definition}
\newtheorem{defn}[thm]{Definition}
\theoremstyle{remark}
\newtheorem{rem}[thm]{Remark}
\numberwithin{equation}{section}
\newcommand{\R}{\mathbb R}
\newcommand{\He}{\mathbb H}
\newcommand{\C}{{\mathbb C}}
\DeclareMathOperator{\arccosh}{arccosh}
\renewcommand{\Im}{\operatorname{Im}}
\title[Holomorphic extensions of eigenfunctions]
{Holomorphic extensions of eigenfunctions\\ on $NA$ groups}
\author[L. Roncal and S. Thangavelu]{Luz Roncal \and Sundaram Thangavelu}
\address[L. Roncal]{BCAM - Basque Center for Applied Mathematics \\
48009 Bilbao, Spain and Ikerbasque, Basque Foundation for Science, 48011 Bilbao, Spain}
\email{lroncal@bcamath.org }
\address[S. Thangavelu]{Department of Mathematics\\
 Indian Institute of Science\\
560 012 Bangalore, India}
\email{veluma@iisc.ac.in}
\keywords{Hyperbolic spaces, $NA $ groups, Laplace-Beltrami operator, eigenfunctions, holomorphic extension, extension problem, tube domain}
\subjclass[2010]{Primary: 22E30. Secondary: 22E45, 32M05, 35R03}
\begin{document}
\maketitle

\begin{abstract} 
Let $ X = G/K $ be a rank one Riemannian symmetric space of noncompact type. In view of the Iwasawa decomposition $ G = NAK $ of the underlying semisimple Lie group, we can also view $ X $ as the solvable extension $ S = NA $ of the Iwasawa group $ N$ which is known to be a $H$-type group. In this work we study the holomorphic extendability of eigenfunctions of the Laplace-Beltrami operator $ \Delta_S$ on $ S $ to certain domains in the complexification of the nilpotent group $ N$.  We can also do the same for any $H$-type group $ N $ not necessarily an Iwasawa group. The results are accomplished by making use of the connection with  solutions of the  extension problem for the Laplacian or the sublaplacian on the corresponding $ N$.
 \end{abstract}

\tableofcontents
%%%%%%%%%%%%%%%%%%%%%%%%
\section{Introduction} 

It is known that Poisson transform of a function on the unit circle in the complex plane delivers a harmonic function on the unit disk. Moreover, each eigenfunction of the Laplace-Beltrami operator of the Poincar\'e disk can be expressed as the Poisson integral of a hyperfunction on the unit circle. In more generality, let $ X  =G/K $ be a non-compact Riemannian symmetric space where $ G $ is a connected semisimple Lie group and $ K $ a maximal compact subgroup of $ G$. The \textit{Helgason conjecture} claims that each joint eigenfunction of the invariant differential operators on $X$ has a Poisson integral representation by a hyperfunction boundary value \cite{H}. Helgason proved the statement for the Poincar\'e disk, and in the general case the conjecture was proven in \cite{KKMOOT}.

 In \cite{KS} Kr\"otz and Schlichtkrull showed that eigenfunctions of the Laplace-Beltrami operator $ \Delta_X $ on $ X $ have  holomorphic extensions to an open $ G $  invariant neighbourhood of $ X $ inside the complexification $ X_\C.$ If $ G_\C $ and $ K_\C $ stand for the complexifications of $G $ and $ K $ respectively, then $ X_\C = G_\C/K_\C$ has a natural complex structure and contains $ X $ as a totally real submanifold. Inside $ X_\C $ there is  a particularly simple  $ G $ -invariant open neighbourhood $ \Xi$ of $X $ first proposed by Akhiezer and Gidikin \cite{AG} and studied quite a lot in recent years by Kr\"otz and Stanton \cite{KSt1,KSt2}, see also the work by Camporesi and Kr\"otz \cite{CK}. It is called the complex crown and in \cite{KS} the authors proved that every eigenfunction of $ \Delta_X $ has a natural holomorphic extension to $ \Xi$. 

In the simple case of rank one symmetric spaces of non-compact type, the Iwasawa decomposition $ G = NAK $ allows us to identify $ X = G/K $ with   the  solvable group $ S = NA$. In this decomposition, $ N $ turns out to be a $H$-type group and $ A $, being identified with $ \R^+ $, has a natural action on $ N $ as dilations. Then the semidirect product $ S = NA $ can be identified with the symmetric space $ X$. In this setting, eigenfunctions of $ \Delta_S $ are functions on $ N \times A $. If $ u(n, a)$, $n \in N$, $a \in A  $, is an eigenfunction, we are interested in the holomorphic extendability of $ n \rightarrow u(n,a) $, \textit{for $ a $ fixed}. We expect that for each $ a $ there exists an open set $ \Omega_a$ inside the complexification $ N_\C $ of $ N $ to which all eigenfunctions of $ \Delta_S $ holomorphically extend. 

In particular, for hyperbolic spaces, where $ N $ is either $ \R^n $ or the Heisenberg group $ \He^n$, we pose the following slightly different problem: we fix $ a $ and ask for holomorphic extension in the $ n $ variable. In this paper we will show that this turns out to be true when $ X $ is a real or complex hyperbolic space. The motivation to study this problem comes from the relation between the eigenfunctions and the solutions to the generalised harmonic extensions. More precisely, we will prove that eigenfunctions of the Laplace-Beltrami operator $\Delta_S$ can be expressed as a Poisson integral of functions in certain weighted $L^2$ space if and only if for all $a$ the eigenfunction holomorphically extends to an open set $\Omega_{a}$, satisfying certain mild growth condition, see Theorem \ref{eigenR} and Theorem \ref{charE}. We will exploit the connection of the eigenfunctions with the solutions to the corresponding extension problems in the real and complex hyperbolic spaces. Indeed, we will prove characterisations of solutions of such extension problems, see Theorem \ref{thm:solepR} and Theorem \ref{charH}, and we will transfer the results to the eigenfunction problem. The key tool in the proofs of Theorems  \ref{thm:solepR} and Theorem \ref{charH} will be the use of the Gutzmer's formulas available in each case (in the case of the real hyperbolic space we will employ it implicitly). Our general approach to tackle both settings will be strongly based on spectral methods. As a final comment, we notice that an analogue of Helgason conjecture in general $NA$ groups remains open, see the end of Section \ref{complex}.

The paper is mainly organised in three parts. In Section \ref{real} we show the results for the real hyperbolic space, which are contained in Theorem  \ref{thm:solepR} and Theorem \ref{eigenR}, and in Section \ref{complex} for the complex hyperbolic space, Theorem \ref{charH} and Theorem \ref{charE}. Each section is self-contained and can be read independently. As expected, the case of complex hyperbolic space is far more involved.  Finally, in Section \ref{extensionH} we take up the case of general $H$-type groups. Using partial Radon transform in the central variable, we reduce the problem to the case of Heisenberg groups and we deduce results on the general $ NA $ groups from the results of Section \ref{complex}.

%%%%%%%%%%
\section{Eigenfunctions on the real hyperbolic space} 
\label{real}
%%%%%%%%%%

In this section we study the holomorphic extendability of eigenfunctions of the Laplace-Beltrami operator on real hyperbolic spaces. We do this by using the connection with the solutions of the extension problem for the Laplacian on $ \R^n$.

%%%%%%%%%%
\subsection{Holomorphic properties of solutions of the extension problem}
\label{holosol}
%%%%%%%%%%

By the extension problem for the Laplacian $ \Delta $ on $ \R^n $ we mean the  initial value problem, for $s>0$,
\begin{equation*}  
%\label{epreal}
\big( \Delta + \partial_\rho^2 + \frac{1-s}{\rho}\partial_\rho )u(x,\rho) = 0 , \quad  u(x,0) = f(x), \qquad x \in \R^n, \,\,\rho> 0.
\end{equation*}
This problem has received considerable attention in recent times, especially after the  work by Caffarelli and Silvestre \cite{CS}. We are interested in the case when the initial condition $ f $ belongs to $ L^2(\R^n) .$ In this case the solution  is given by $ u(x,\rho) = \rho^sf \ast \varphi_{s,\rho}(x) $ where $ \varphi_{s,\rho} $ is the generalised Poisson kernel 
\begin{equation*} 
\varphi_{s,\rho}(x) =  \pi^{-n/2} \frac{\Gamma(\frac{n+s}{2})}{|\Gamma(s)|} (\rho^2+|x|^2)^{-\frac{n+s}{2}}.
\end{equation*}
We observe that the generalised Poisson kernel  has a holomorphic extension 
\begin{equation}
\label{geneP}
\varphi_{s,\rho}(z) =  \pi^{-n/2} \frac{\Gamma(\frac{n+s}{2})}{|\Gamma(s)|} (\rho^2+z_1^2+z_2^2+...+z_n^2)^{-\frac{n+s}{2}}.
\end{equation}
to the tube domain 
\begin{equation}
\label{tubeR} 
\Omega_{\rho} = \{ z=x+iy \in \C^n: |y| < \rho \} .
\end{equation}
It then follows that the solution $ u(x,\rho) $ also has a holomorphic extension $u(z,\rho)$ to $ \Omega_\rho.$ 

Let us recall some definitions and properties of Bessel functions that will be needed below. Let 
$J_{\nu}(z)$ and $I_{\nu}(z)$ be respectively the Bessel function of first kind of order $\nu$ and the modified Bessel function of first kind of order $\nu$ given by the formulas (see \cite[Chapter 5, Section 5.3 and Section 5.7]{Lebedev})
\begin{equation*}
%\label{eq:Jnu}
J_{\nu}(z)=\sum_{k=0}^{\infty}\frac{(-1)^k(z/2)^{\nu+2k}}{\Gamma(k+1)\Gamma(k+\nu+1)}, \qquad |z|<\infty, \quad |\operatorname{arg} z|<\pi
\end{equation*}
and 
\begin{equation}
\label{eq:Inu}
I_{\nu}(z)=\sum_{k=0}^{\infty}\frac{(z/2)^{\nu+2k}}{\Gamma(k+1)\Gamma(k+\nu+1)}, \qquad |z|<\infty, \quad |\operatorname{arg} z|<\pi
\end{equation}
and let $ K_{\nu} $ be the Macdonald's function of order $\nu$ defined by (see \cite[Chapter 5, Section 5.7]{Lebedev})
\begin{equation}
\label{eq:Knu}
K_{\nu}(z)=\frac{\pi}{2}\frac{I_{-\nu}(z)-I_{\nu}(z)}{\sin \nu\pi}, \qquad |\operatorname{arg} z|<\pi, \quad  \nu\neq 0,\pm1, \pm 2, \ldots
\end{equation}
and, for integral $\nu=n$, $K_n(z)=\lim_{\nu\to n}K_{\nu}(z)$, $n= 0,\pm1, \pm 2, \ldots$.
From \eqref{eq:Inu} and \eqref{eq:Knu} it is clear that there exist constants $C_1, C_2, c_1,c_2>0$ such that
\begin{equation}
\label{eq:aszero}
c_1z^{\nu}\le I_{\nu}(z)\le C_1z^{\nu}, \quad c_2z^{-\nu}\le K_{\nu}(z)\le C_2z^{-\nu}, \quad \text{ for } z\to 0^+.
\end{equation}
Moreover, it is well known (see \cite[Chapter 5, Section 5.11]{Lebedev}) that, for small $\delta$,
\begin{equation*}
%\label{eq:asymptotics-infinite}
I_{\nu}(z)=C e^z z^{-1/2}+ R_{\nu}(z),\qquad 
| R_{\nu}(z)|\le C_{\nu} e^zz^{-3/2}, \quad |\operatorname{arg} z|\le\pi-\delta
\end{equation*}
and 
\begin{equation}
\label{eq:asymptotics-infiniteK}
K_{\nu}(z)=C e^{-z} z^{-1/2}+ \widetilde{R}_{\nu}(z),\qquad 
| \widetilde{R}_{\nu}(z)|\le C_{\nu} e^{-z}z^{-3/2}, \quad |\operatorname{arg} z|\le\pi-\delta.
\end{equation}

\begin{defn}
For any $ s > 0$ consider the following weight function
\begin{equation}
\label{weigh}
 w_\rho(\xi): =  c_{n,s} (\rho |\xi|)^s ( K_{s/2}(\rho |\xi|))^2 \frac{I_{s+n/2-1}(2\rho|\xi|)}{(2\rho|\xi|)^{s+n/2-1}}.
 \end{equation}
We define $\mathcal{H}^s_\rho(\R^n) $ to be the Sobolev space of all tempered distributions $ f $ whose Fourier transforms $ \widehat{f} $ are functions for which
$$
 \| f\|_{\mathcal{H}^s_\rho}^2 :=  \int_{\R^n}  |\widehat{f}(\xi)|^2 w_\rho(\xi) \,d\xi < \infty.
 $$
 Here, 
$$ 
\widehat{f}(\xi) := \int_{\R^n}f(x)e^{-ix\cdot \xi}\,dx.
$$
 \end{defn}
 
Recalling the definition of $ w_\rho $ in \eqref{weigh} and using the  asymptotic properties of $ K_\nu $ and $ I_\nu $ we check that 
\begin{equation} 
\label{asymw}
c (1+\rho |\xi|)^{-(n+1)/2}  \leq w_\rho(\xi) \leq C (1+\rho |\xi|)^{-(n+1)/2}. 
\end{equation}
From this it is clear that $ f \in L^2(\R^n) $ if and only if $ f \in \mathcal{H}^s_\rho(\R^n) $ and satisfies the uniform estimates $ \|f\|_{\mathcal{H}^s_\rho(\R^n)} \leq C $ for all $ \rho >0$.

\begin{rem}
Actually we can show that for a tempered distribution $ f $ whose Fourier transform is a function, 
$$  
\int_{\R^n} |\widehat{f}(\xi)|^2 (1+|\xi|^2)^{-(n+1)/4} d\xi < \infty 
$$ 
if and only if $ f \in \mathcal{H}^s_\rho $ for some $ \rho > 0 $ (and hence for all $ \rho >0$). However, as observed above, $ \| f\|_{\mathcal{H}^s_\rho}$ is uniformly bounded precisely when $ f \in L^2(\R^n)$.
\end{rem}
 
\begin{defn}
\label{wBerg}
For any $ s > 0 $ we define the weighted Bergman space $\mathcal{B}_s(\Omega_\rho) $ consisting of all holomorphic functions $ F $ on $ \Omega_\rho $ for which 
\begin{equation*} 
%\label{eq1}
\| F \|^2_{\mathcal{B}_s} :=  \rho^{-n} \int_{\Omega_\rho}  |F(x+iy, \rho)|^2 \Big(1-\frac{|y|^2}{\rho^2}\Big)_+^{s-1}\,dx\, dy < \infty .
\end{equation*}
\end{defn}

\begin{prop}
\label{holoms}
Let $ f \in \mathcal{H}^s_\rho(\R^n) $ and $u(x,\rho) = \rho^s f \ast \varphi_{s,\rho}(x)$ be the solution to the extension problem
\begin{equation*}  
\big( \Delta + \partial_\rho^2 + \frac{1-s}{\rho}\partial_\rho )u(x,\rho) = 0 , \quad  u(x,0) = f(x), \qquad x \in \R^n, \,\,\rho> 0.
\end{equation*}
Then the solution has a holomorphic extension $ u(z,\rho) $ to $\Omega_{\rho}$ and  belongs to the weighted Bergman space $\mathcal{B}_s(\Omega_\rho) $.
\end{prop}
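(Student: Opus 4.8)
The plan is to transfer everything to the Fourier transform side. It is classical that the generalised Poisson kernel \eqref{geneP} satisfies $\widehat{\varphi_{s,\rho}}(\xi)=c_{n,s}\,\rho^{-s}(\rho|\xi|)^{s/2}K_{s/2}(\rho|\xi|)$, so that the solution of the extension problem is the Fourier multiplier
\[
u(x,\rho)=(2\pi)^{-n}\int_{\R^n}\widehat f(\xi)\,\phi(\rho|\xi|)\,e^{ix\cdot\xi}\,d\xi,\qquad \phi(t):=c_{n,s}\,t^{s/2}K_{s/2}(t),
\]
the constant being normalised, thanks to \eqref{eq:aszero}, so that $\phi(0)=1$ (consistent with $u(\cdot,0)=f$). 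Using \eqref{eq:aszero} and \eqref{eq:asymptotics-infiniteK} one records the elementary bound $|\phi(t)|\le C(1+t)^{(s-1)/2}e^{-t}$ for $t>0$; combined with \eqref{asymw}, this shows that for each fixed $y$ with $|y|<\rho$ the function $\xi\mapsto|\phi(\rho|\xi|)|^{2}e^{-2y\cdot\xi}/w_\rho(\xi)$ is bounded on $\R^n$, uniformly for $|y|\le\rho-\delta$.

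First I would establish the holomorphic extension. Replacing $x$ by $z=x+iy\in\Omega_\rho$ yields the candidate $u(z,\rho)=(2\pi)^{-n}\int_{\R^n}\widehat f(\xi)\,\phi(\rho|\xi|)\,e^{iz\cdot\xi}\,d\xi$; since $|e^{iz\cdot\xi}|=e^{-y\cdot\xi}\le e^{|y||\xi|}$, the Cauchy--Schwarz inequality against $\|f\|_{\mathcal{H}^s_\rho}$ together with the bound just recorded shows that this integral converges absolutely and locally uniformly on $\Omega_\rho$ — the exponential decay of $\phi$ absorbing the factor $e^{|y||\xi|}$ precisely because $|y|<\rho$. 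Hence $u(\cdot,\rho)$ is holomorphic on $\Omega_\rho$ by differentiation under the integral sign (or Morera's theorem), it restricts to $u(\cdot,\rho)$ on $\R^n$, and by the identity theorem it is the unique such extension, agreeing with $\rho^s f\ast\varphi_{s,\rho}$ wherever the latter is classically defined.

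Next I would compute the Bergman norm. For fixed $y$ with $|y|<\rho$, the function $x\mapsto u(x+iy,\rho)$ is the inverse Fourier transform of $\widehat f(\xi)\phi(\rho|\xi|)e^{-y\cdot\xi}$, which lies in $L^2(\R^n)$ by the estimates above, so Plancherel gives $\int_{\R^n}|u(x+iy,\rho)|^2\,dx=(2\pi)^{-n}\int_{\R^n}|\widehat f(\xi)|^2|\phi(\rho|\xi|)|^2 e^{-2y\cdot\xi}\,d\xi$. Multiplying by $(1-|y|^2/\rho^2)_+^{s-1}$, integrating over $\{|y|<\rho\}$ and interchanging the $y$- and $\xi$-integrations (Tonelli; all integrands nonnegative) reduces matters to evaluating, after the substitution $y=\rho v$,
\[
\int_{|v|<1}(1-|v|^2)^{s-1}e^{-2\rho v\cdot\xi}\,dv
= c_{n,s}\,\frac{I_{s+n/2-1}(2\rho|\xi|)}{(2\rho|\xi|)^{s+n/2-1}} ;
\]
this classical identity follows from the Funk--Hecke expansion $\int_{\sn}e^{t\cdot\omega}\,d\sigma(\omega)=(2\pi)^{n/2}|t|^{1-n/2}I_{n/2-1}(|t|)$ followed by Sonine's finite integral for $I_\nu$. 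Inserting it, using $|\phi(\rho|\xi|)|^2=c_{n,s}(\rho|\xi|)^{s}(K_{s/2}(\rho|\xi|))^{2}$, the powers $\rho^{\pm n}$ cancel against the $\rho^{-n}$ in Definition \ref{wBerg}, and one is left with
\[
\|u(\cdot,\rho)\|_{\mathcal{B}_s}^2=c_{n,s}\int_{\R^n}|\widehat f(\xi)|^2\,(\rho|\xi|)^s(K_{s/2}(\rho|\xi|))^2\,\frac{I_{s+n/2-1}(2\rho|\xi|)}{(2\rho|\xi|)^{s+n/2-1}}\,d\xi=c\,\|f\|_{\mathcal{H}^s_\rho}^2<\infty,
\]
the integrand being exactly the weight \eqref{weigh}; this proves $u(\cdot,\rho)\in\mathcal{B}_s(\Omega_\rho)$.

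The heart of the argument is the Bessel-function identity above: it is precisely what makes the weight $w_\rho$ of \eqref{weigh} reappear, so that $f\mapsto u(\cdot,\rho)$ is, up to a constant, an isometry $\mathcal{H}^s_\rho(\R^n)\to\mathcal{B}_s(\Omega_\rho)$. The remaining points requiring care are the justifications of the interchanges of integration and of differentiation under the integral sign, but these all reduce to the asymptotics \eqref{eq:aszero} and \eqref{eq:asymptotics-infiniteK} and to the two-sided estimate \eqref{asymw}; a minor subtlety is that, since $f$ is merely a tempered distribution, both the solution $\rho^s f\ast\varphi_{s,\rho}$ and its holomorphic continuation are most cleanly handled through the Fourier integral above rather than through the convolution directly.
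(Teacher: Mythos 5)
Your proof is correct and follows essentially the same route as the paper's: the Fourier-multiplier representation of $u$ via $(\rho|\xi|)^{s/2}K_{s/2}(\rho|\xi|)$, holomorphy from the exponential decay of $K_{s/2}$, Plancherel in $x$ for each fixed $y$, and the Bessel identity \eqref{FTweight} to produce the weight $w_\rho$ and the norm equality $\|u(\cdot,\rho)\|^2_{\mathcal{B}_s}=c\,\|f\|^2_{\mathcal{H}^s_\rho}$. The only cosmetic differences are that you derive the weight identity via Funk--Hecke and Sonine's integral rather than by analytic continuation of the Fourier transform of $(1-|y|^2)^{s-1}_+$, and that you spell out the Cauchy--Schwarz justification of locally uniform convergence, which the paper leaves implicit.
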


\begin{proof}
We begin by noting that the solution can  also be written as (see \cite{BGS})
\begin{equation*}
%\label{solution}
u(x,\rho)  = (2\pi)^{-n} \frac{2^{1-s/2}}{\Gamma(s/2)} \int_{\R^n}  \widehat{f}(\xi) (\rho |\xi|)^{s/2} K_{s/2}(\rho |\xi|) e^{i x\cdot \xi} d\,\xi.
\end{equation*}
It  follows from the asymptotic properties of $ K_{s/2} $ (see \eqref{eq:aszero} and \eqref{eq:asymptotics-infiniteK})  that for each $ \rho $ the function $ u(x,\rho) $ extends to the  domain $ \Omega_{\rho}$  as a holomorphic function. Then  by the Plancherel theorem for the Fourier transform, we have
\begin{equation}
\label{planch}
\int_{\R^n} |u(x+iy, \rho)|^2 dx  = (2\pi)^{-n} \frac{2^{2-s}}{\Gamma(s/2)^2} \int_{\R^n}  |\widehat{f}(\xi)|^2 (\rho |\xi|)^s ( K_{s/2}(\rho |\xi|))^2 e^{2 y\cdot \xi} d\xi.
\end{equation}
Recall that the Fourier transform of $(1-|y|^2)^{\alpha-1}_+$ is  explicitly known:
$$  
\int_{\R^n} (1-|y|^2)^{\alpha-1}_+ e^{-i y\cdot \xi} dy = c_{n,\alpha} |\xi|^{-\alpha-n/2+1}J_{\alpha+n/2-1}(|\xi|)
$$
where $ c_{n,\alpha}:=(2\pi)^{n/2} \Gamma(\alpha) 2^{\alpha-1}$.
 As the Bessel functions extend to $ \C $ as entire functions, it follows that
 \begin{equation} 
\label{FTweight}
\rho^{-n} \int_{\R^n} (\rho^2-|y|^2)_+^{s-1} e^{2y\cdot \xi} dy = c_{n,s} \rho^{2(s-1)}  (2\rho |\xi|)^{-s-n/2+1} I_{s+n/2-1}(2\rho |\xi|).
\end{equation} 

Multiplying both sides of \eqref{planch} by $ \rho^{-n}\big(1-\frac{|y|^2}{\rho^2}\big)_+^{s-1},$ integrating  with respect to $ y $ and making use of \eqref{FTweight} we have
\begin{multline}
\label{integration} 
\rho^{-n}\int_{|y| < \rho} \int_{\R^n} |u(x+iy, \rho)|^2 \Big(1-\frac{|y|^2}{\rho^2}\Big)_+^{s-1} \,dx \,dy\\
= d_{n,s}  \int_{\R^n}  |\widehat{f}(\xi)|^2 (\rho |\xi|)^s ( K_{s/2}(\rho |\xi|))^2 \frac{I_{s+n/2-1}(2\rho|\xi|)}{(2\rho|\xi|)^{s+n/2-1}} d\xi 
\end{multline}
where $ d_{n,s} := c_{n,s} (2\pi)^{-n} \frac{2^{2-s}}{\Gamma(s/2)^2}.$ 

The above calculations show that when $ f \in \mathcal{H}^s_\rho(\R^n) $, the solution $ u(z,\rho) $ belongs to the weighted Bergman space $\mathcal{B}_s(\Omega_\rho) $.
\end{proof}

From the proof of Proposition \ref{holoms} it is clear that we have  equality of norms:
$ \| u_\rho \|^2_{\mathcal{B}_s} =  \| f \|_{\mathcal{H}^s_\rho}^2 $,
where $ u_\rho(z) := u(z,\rho).$ This means that the map $ T_\rho $ taking $ f $ into the holomorphic function $  u_\rho $ is a constant multiple of an isometry. 
\begin{prop}
\label{unitary}
The map $ T_\rho: \mathcal{H}^s _\rho(\R^n) \rightarrow \mathcal{B}_s(\Omega_\rho) $ is unitary.
\end{prop}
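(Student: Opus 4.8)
The norm identity recorded just before the statement shows that $T_\rho$ is, up to the fixed constant built into the normalization $c_{n,s}$ of the weight \eqref{weigh}, norm preserving; in particular $T_\rho$ is injective with closed range. So the only thing left is that $T_\rho$ is \emph{onto}. The plan is to establish a Paley--Wiener (``Gutzmer'') description of the elements of $\mathcal{B}_s(\Omega_\rho)$ and then to pull back a given $F\in\mathcal{B}_s(\Omega_\rho)$ through the explicit Fourier formula for the solution of the extension problem.

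First I would show that every $F\in\mathcal{B}_s(\Omega_\rho)$ has the form
\[
F(x+iy)=(2\pi)^{-n}\int_{\R^n}G(\xi)\,e^{-y\cdot\xi}e^{ix\cdot\xi}\,d\xi,\qquad |y|<\rho,
\]
for a measurable $G$ with $\int_{\R^n}|G(\xi)|^2(2\rho|\xi|)^{-s-n/2+1}I_{s+n/2-1}(2\rho|\xi|)\,d\xi<\infty$. To get this, note that on each set $\{|y|\le r\}$, $r<\rho$, the weight $(1-|y|^2/\rho^2)_+^{s-1}$ is bounded above and below, so by Fubini $x\mapsto F(x+iy)\in L^2(\R^n)$ for a.e.\ $y$, and then, by the sub-mean-value inequality for $|F|^2$, for \emph{every} $y\in B_\rho$, with $L^2$-norm locally bounded in $y$. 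Writing $H(\xi,y):=\widehat{F(\cdot+iy)}(\xi)$, the Cauchy--Riemann equations $\partial_{y_j}F=i\,\partial_{x_j}F$ (together with the fact that $\partial_{x_j}F$ again lies in a Bergman space on a slightly smaller tube, by Cauchy estimates) give, after differentiating under the integral and integrating by parts, $\partial_{y_j}H(\xi,y)=-\xi_j H(\xi,y)$; fixing an interior base point $y_0$ and using that $B_\rho$ is connected yields $H(\xi,y)=G(\xi)e^{-y\cdot\xi}$ with $G(\xi):=H(\xi,y_0)e^{y_0\cdot\xi}$, which is the claimed representation. The integrability statement for $G$ then follows by Plancherel in $x$, multiplication by $\rho^{-n}(1-|y|^2/\rho^2)_+^{s-1}$, integration in $y$, and the explicit Fourier integral \eqref{FTweight} (applied with $\xi$ replaced by $-\xi$, the left side being even in $y$), which gives
\[
\|F\|_{\mathcal{B}_s}^2=c\int_{\R^n}|G(\xi)|^2\,\frac{I_{s+n/2-1}(2\rho|\xi|)}{(2\rho|\xi|)^{s+n/2-1}}\,d\xi
\]
for an explicit $c>0$.

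Given this, surjectivity is immediate. Recall from the proof of Proposition \ref{holoms} that the holomorphic extension of the solution attached to $g\in\mathcal{H}^s_\rho$ is
\[
u(x+iy,\rho)=(2\pi)^{-n}\frac{2^{1-s/2}}{\Gamma(s/2)}\int_{\R^n}\widehat g(\xi)\,(\rho|\xi|)^{s/2}K_{s/2}(\rho|\xi|)\,e^{-y\cdot\xi}e^{ix\cdot\xi}\,d\xi.
\]
Since $K_{s/2}(\rho|\xi|)>0$ for $\xi\ne0$, I define $f$ by $\widehat f(\xi):=\tfrac{\Gamma(s/2)}{2^{1-s/2}}(\rho|\xi|)^{-s/2}\big(K_{s/2}(\rho|\xi|)\big)^{-1}G(\xi)$. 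Substituting into \eqref{weigh}, the factors $(\rho|\xi|)^{s}$ and $K_{s/2}(\rho|\xi|)^{2}$ cancel against those in $|\widehat f(\xi)|^2$, so $\|f\|_{\mathcal{H}^s_\rho}^2=c'\int_{\R^n}|G(\xi)|^2(2\rho|\xi|)^{-s-n/2+1}I_{s+n/2-1}(2\rho|\xi|)\,d\xi<\infty$, i.e.\ $f\in\mathcal{H}^s_\rho(\R^n)$; and by construction the $x$-Fourier transform of $(T_\rho f)(\cdot+iy)$ equals $G(\xi)e^{-y\cdot\xi}$, which is the $x$-Fourier transform of $F(\cdot+iy)$, so $T_\rho f=F$. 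Together with the norm identity (and the choice of $c_{n,s}$), this shows that $T_\rho$ is unitary.

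The main obstacle is the structure result of the second paragraph: the delicate points are the behaviour of $F$ near the boundary $|y|=\rho$ (where the weight degenerates when $s<1$), needed to know that the $L^2$-slices $F(\cdot+iy)$ are under control, and the rigorous passage from the Cauchy--Riemann equations to the ordinary differential equation $\partial_{y_j}H=-\xi_j H$. The cleanest way to handle the latter is to regard $z\mapsto F(\cdot+z)$ as an $L^2(\R^n)$-valued holomorphic map on a small polydisc around each interior point and to use that the Fourier transform intertwines it with the multiplier family $e^{iz\cdot\xi}$, so that uniqueness of analytic continuation for vector-valued holomorphic maps does the rest. The remaining steps are just the computations in the proof of Proposition \ref{holoms}, read in reverse.
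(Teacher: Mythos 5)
Your proof is correct, and it takes a genuinely different route to surjectivity than the paper does. Both arguments hinge on the same structural fact about an arbitrary $F\in\mathcal{B}_s(\Omega_\rho)$: that its slices satisfy $\widehat{F(\cdot+iy)}(\xi)=G(\xi)e^{-y\cdot\xi}$ with $G$ the Fourier transform of the boundary restriction, whence $\|F\|_{\mathcal{B}_s}^2$ equals a constant times $\int|G(\xi)|^2\,(2\rho|\xi|)^{-s-n/2+1}I_{s+n/2-1}(2\rho|\xi|)\,d\xi$ (this is the paper's identity \eqref{integ}). You derive the slice representation from the Cauchy--Riemann equations and vector-valued analytic continuation; the paper instead pairs $F$ against test functions $\varphi$ with compactly supported Fourier transform, notes that $z\mapsto\int F(a+z)\overline{\varphi}(a)\,da$ is holomorphic and agrees with $\int e^{ib\cdot z}\widehat F(b)\overline{\widehat\varphi}(b)\,db$ on $\R^n$, and takes a supremum over such $\varphi$ --- a slicker way of handling exactly the analytic-continuation issues you flag as delicate. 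Where the two proofs genuinely diverge is in how surjectivity is concluded from this identity: you construct the preimage explicitly by dividing $G$ by the nonvanishing multiplier $(\rho|\xi|)^{s/2}K_{s/2}(\rho|\xi|)$ and checking (correctly --- the cancellation against the weight \eqref{weigh} is exact, and the resulting $\widehat f$ is tempered because $\int|\widehat f|^2w_\rho<\infty$ with $w_\rho\simeq(1+\rho|\xi|)^{-(n+1)/2}$) that the result lies in $\mathcal{H}^s_\rho$; the paper instead argues indirectly that the (closed) range is dense, by polarising the norm identity and showing that any $F$ orthogonal to the range has vanishing boundary restriction, hence vanishes by holomorphy. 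Your explicit inversion is arguably more informative --- it exhibits $T_\rho^{-1}$ as a Fourier multiplier --- at the cost of having to justify the division; the paper's density argument sidesteps the construction entirely. Both are sound.
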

\begin{proof} Suppose we are given  that $ u(z,\rho) $  is holomorphic on $ \Omega_ \rho $ and satisfies 
$$  
\rho^{-n}  \int_{\R^n} \int_{\R^n} |u(x+iy, \rho)|^2 \Big(1-\frac{|y|^2}{\rho^2}\Big)_+^{s-1}\,dx dy \leq C(\rho).
$$
Then for almost all $ |y| < \rho $ the integral  $\int_{\R^n} |u(x+iy, \rho)|^2 dx $ is finite. Without loss of generality we can assume that $\int_{\R^n} |u(x, \rho)|^2 dx $ is finite. For  any $ \varphi \in L^2(\R^n) $ whose Fourier transform is compactly supported, the function 
$ \int_{\R^n} u(a+z,\rho) \overline{\varphi}(a) \,da $, which is holomorphic on $ \Omega_  \rho $, agrees with $ \int_{\R^n} e^{ib\cdot z} \widehat{u}(b,\rho) \overline{\widehat{\varphi}}(b) db $ on $ \R^n $ and hence  
$$
\int_{\R^n} u(x+iy,\rho) \overline{\varphi}(x) dx =  \int_{\R^n} e^{- y\cdot \xi} \widehat{u}(\xi,\rho) \overline{\widehat{\varphi}}(\xi) \,d\xi.
$$
Consequently, we have
$$ 
\sup_{\widehat{\varphi} \in C_0^\infty(\R^n)}  \Big|\int_{\R^n} e^{- y\cdot \xi} \widehat{u}(\xi,\rho) \overline{\widehat{\varphi}}(\xi) d\xi\Big|     = \sup_{\widehat{\varphi} \in C_0^\infty(\R^n)}\Big| \int_{\R^n} u(x+iy,\rho) \overline{\varphi}(x) \,dx\Big| 
$$
which simply means that
$$ 
\int_{\R^n} |u(x+iy,\rho)|^2  dx = \int_{\R^n} e^{- 2y\cdot \xi} |\widehat{u}(\xi,\rho) |^2 \,d\xi.
$$
Integrating both sides with respect to the measure $ \rho^{-n} \big(1-\frac{|y|^2}{\rho^2}\big)_+^{s-1} dy$ and arguing as in \eqref{integration}, i.e. by using \eqref{FTweight}, we get
\begin{equation}
\label{integ}
\rho^{-n} \int_{\Omega_\rho}  |u(x+iy,\rho)|^2 \Big(1-\frac{|y|^2}{\rho^2}\Big)_+^{s-1}\,dx \,dy =   c_{n,s} \int_{\R^n}  |\widehat{u}(\xi,\rho) |^2  \frac{I_{s+n/2-1}(2\rho|\xi|)} {(2\rho|\xi|)^{s+n/2-1}} \,d\xi < \infty.
\end{equation}
We will make use of this identity in proving the unitarity of  the map $ T_\rho: \mathcal{H}^s _\rho(\R^n) \rightarrow \mathcal{B}_s(\Omega_\rho)$. 

Since $ T_\rho $ is an isometry all we need is to show that it is surjective. As the image of $ \mathcal{H}^s_{\rho}(\R^n) $ under $ T_{\rho} $ is closed, we only need to show that it is dense in $ \mathcal{B}_s(\Omega_\rho).$ To see this, suppose $ F \in \mathcal{B}_s(\Omega_\rho)$
is such that 
$$  
\int_{\Omega_\rho} F(z) \overline{T_{\rho}f}(z)  \Big(1-\frac{|y|^2}{\rho^2}\Big)_+^{s-1}\,dx\, dy = 0 
$$
for all $ f \in \mathcal{H}^s_{\rho}$. Taking \eqref{integ} into account, if $ F(x) = g(x)$, $x \in \R^n $, then
$$ 
\rho^{-n} \int_{\Omega_\rho} |F(z)|^2  \Big(1-\frac{|y|^2}{\rho^2}\Big)_+^{s-1} \,dx\, dy =  c_{n,s}\int_{\R^n} |\widehat{g}(\xi)|^2 \frac{I_{s+n/2-1}(2\rho|\xi|)}{(2\rho|\xi|)^{s+n/2-1}} \,d\xi.
$$
Polarising this identity and applying it to the pair $ (F, T_{\rho}f) $ we see that
$$ \int_{\R^n} \widehat{g}(\xi) \overline{\widehat{f}}(\xi) (\rho |\xi|)^{s/2} K_{s/2}(\rho|\xi|)  \frac{I_{s+n/2-1}(2\rho|\xi|)}{(2\rho|\xi|)^{s+n/2-1}} d\xi = 0.$$
Consequently, $ g = 0 $  which in turn implies $ F = 0 $ (as $ F $ is holomorphic), proving the denseness of the image of $ \mathcal{H}^s_\rho(\R^n) $ under $ T_\rho $ in $ \mathcal{B}_s(\Omega_\rho).$
\end{proof}

\begin{rem}
The weighted Bergman space $ \mathcal{B}_s(\Omega_\rho) $ in Definition \ref{wBerg} is a reproducing kernel Hilbert space and hence possesses a kernel $ K^\rho(z,w) =K^\rho_w(z) $ which is holomorphic in $ z $ and anti-holomorphic in $ w$ such that $ K^\rho_w \in \mathcal{B}_s $, and for all $ F \in \mathcal{B}_s $ one has $ F(w) = (F, K^\rho_w)_{\mathcal{B}_s}$. The kernel $ K^\rho(z,w) $ can be calculated explicitly. Indeed, let $ g = T_\rho f $ be an element of $ \mathcal{B}_s $ where $ f \in \mathcal{H}^s_\rho.$ Then on the one hand
$$ 
(g, K^\rho_w)_{\mathcal{B}_s} = g(w)= T_\rho f(w) = \pi^{-n/2} \frac{\Gamma(\frac{n+s}{2})}{|\Gamma(s)|} \rho^s  \int_{\R^n} f(y) (\rho^2+(w-y)^2)^{-(n+s)/2} dy.
$$
On the other hand
$$  (g, K^\rho_w)_{\mathcal{B}_s} = (T_\rho f, K^\rho_w)_{\mathcal{B}_s} =( f, T_\rho^{-1} K^\rho_w)_{\mathcal{H}_s}.$$
Comparing the two expressions we see that 
$$ T_\rho^{-1}K^\rho_w(y) =  \pi^{-n/2} \frac{\Gamma(\frac{n+s}{2})}{|\Gamma(s)|}\rho^s (\rho^2+(w-y)^2)^{-(n+s)/2} $$
 and consequently
\begin{equation*}
%\label{Krho}
K^\rho(z,w) = \pi^{-n} \frac{\Gamma(\frac{n+s}{2})^2}{\Gamma(s)^2} \rho^{2s} \int_{\R^n} (\rho^2+(\overline{w}-y)^2)^{-(n+s)/2}(\rho^2+(z-y)^2)^{-(n+s)/2} \,dy.
\end{equation*}
By taking Fourier transform we also have the formula
\begin{equation}
\label{formulaKrho} 
K^\rho(z,w) =   (2\pi)^{-n} \frac{2^{2-s}}{\Gamma(s/2)^2}   \int_{\R^n}  (\rho |\xi|)^s (K_{s/2}(\rho |\xi|))^2  e^{i(\overline{w}-z)\cdot \xi}\, d\xi.
\end{equation}
To see this it is enough to consider $ z $ and $ w $ real as the function is holomorphic in $z $ and anti-holomorphic in $ w$. Then it is of the form $ \varphi_{s,\rho} \ast \varphi_{s,\rho}(z-\overline{w},\rho) $ where $ \varphi_{s,\rho} $ is the generalised Poisson kernel in \eqref{geneP}. By Fourier inversion we get that the formula for $ K^\rho(z,w) $ as the Fourier transform of  the Poisson kernel is given by the Macdonald function.

As $ K^\rho(z,w) $ is the reproducing kernel, every element $ F \in \mathcal{B}_\rho $ satisfies the pointwise estimate
$  | F(z)| \leq \|F\|_{\mathcal{B}_s} K^\rho(z,z)^{1/2} $. Indeed, 
$$
 | F(z)|=|\langle F,K_z^{\rho}\rangle_{\mathcal{B}_s}|\le  \|K_z^{\rho}\|_{\mathcal{B}_s} \|F\|_{\mathcal{B}_s}=\langle K_z^{\rho},K_z^{\rho}\rangle^{1/2}_{\mathcal{B}_s} \|F\|_{\mathcal{B}_s}= \|F\|_{\mathcal{B}_s}K^{\rho}(z,z)^{1/2}
$$
and hence by the definition of the reproducing kernel \eqref{formulaKrho},
$$ 
|F(z)|^2 \leq (2\pi)^{-n} \frac{2^{2-s}}{\Gamma(s/2)^2} \|F\|_{\mathcal{B}_s}^2 \int_{\R^n} (\rho |\xi|)^s (K_{s/2}(\rho |\xi|))^2  e^{2 y\cdot \xi} \,d\xi.
$$ 
Integrating in polar coordinates we have
$$ 
|F(z)|^2 \leq C_{n,s} \|F\|_{\mathcal{B}_s}^2 \int_0^\infty  (\rho r)^s (K_{s/2}(\rho r))^2  \frac{I_{n/2-1}(2r|y|)}{(2r|y|)^{n/2-1}} r^{n-1}\, dr.
$$ 
We observe that there is an explicit formula for  the above integral: indeed,   \cite[p. 402, 2.16.45, 6]{Prudnikov2} gives
\begin{multline*}
 \int_0^\infty  (\rho r)^s (K_{s/2}(\rho r))^2  \frac{I_{n/2-1}(2r|y|)}{(2r|y|)^{n/2-1}} r^{n-1} \,dr\\
 =\frac{1}{2^{n/2+1}}\sqrt{\pi}\frac{\Gamma(n/2+s)\Gamma((n+2)/2)}{\Gamma((n+s+1)/2)}\rho^{-n} {}_2F_1\Big(\frac{n}{2}+s,\frac{n+s}{2},\frac{n+s+1}{2};\frac{|y|^2}{\rho^2}\Big), \quad |y|<\rho.
\end{multline*}

\end{rem}

We are now ready to state and prove the following theorem on solutions of the extension problem.
\begin{thm} 
\label{thm:solepR}
A solution  of the extension problem  is of the form $ u(x,\rho) =  f \ast \rho^{s} \varphi_{s,\rho}(x)$ for some  $ f \in L^2(\R^n) $ if and only if for each $ \rho > 0$, $u(\cdot,\rho) $
extends to $ \Omega_\rho $ as a holomorphic function, belongs to $ \mathcal{B}_s(\Omega_\rho) $ and satisfies the uniform estimate  $\| u(\cdot,\rho) \|_{\mathcal{B}_s} \leq C $ for all $ \rho >0 .$ 
\end{thm}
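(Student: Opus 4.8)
The plan is to prove the two implications separately: the ``only if'' part is a direct consequence of the machinery already assembled, while the ``if'' part hinges on one new fact, namely that a solution of the extension problem all of whose $\rho$‑slices lie in the Bergman spaces $\mathcal{B}_s(\Omega_\rho)$ is forced, by the equation itself, to be the generalised Poisson integral of a single fixed boundary distribution. For the ``only if'' part, suppose $u(x,\rho)=f\ast\rho^s\varphi_{s,\rho}(x)$ with $f\in L^2(\R^n)$. Since \eqref{asymw} shows the weight $w_\rho$ to be bounded, $f$ lies in $\mathcal{H}^s_\rho(\R^n)$ for every $\rho>0$, and Proposition \ref{holoms} then gives that $u(\cdot,\rho)$ extends holomorphically to $\Omega_\rho$ and belongs to $\mathcal{B}_s(\Omega_\rho)$. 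The equality of norms $\|u(\cdot,\rho)\|_{\mathcal{B}_s}^2=\|f\|_{\mathcal{H}^s_\rho}^2$ following from \eqref{integration}, together with the equivalence — again read off from \eqref{asymw} — that $f\in L^2(\R^n)$ precisely when $\|f\|_{\mathcal{H}^s_\rho}\le C$ uniformly in $\rho$, yields the desired uniform bound $\|u(\cdot,\rho)\|_{\mathcal{B}_s}\le C$.

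For the ``if'' part, fix $\rho>0$. Because $u(\cdot,\rho)\in\mathcal{B}_s(\Omega_\rho)$ and $T_\rho\colon\mathcal{H}^s_\rho(\R^n)\to\mathcal{B}_s(\Omega_\rho)$ is unitary (Proposition \ref{unitary}), there is a unique $f_\rho\in\mathcal{H}^s_\rho(\R^n)$ with $u(\cdot,\rho)=T_\rho f_\rho=f_\rho\ast\rho^s\varphi_{s,\rho}$, and $\|f_\rho\|_{\mathcal{H}^s_\rho}=\|u(\cdot,\rho)\|_{\mathcal{B}_s}\le C$. On the Fourier transform side this reads $\widehat{u}(\xi,\rho)=\frac{2^{1-s/2}}{\Gamma(s/2)}\widehat{f_\rho}(\xi)(\rho|\xi|)^{s/2}K_{s/2}(\rho|\xi|)$. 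The remaining and central point is that $f_\rho$ does not depend on $\rho$; once this is shown, $f:=f_\rho$ satisfies $\|f\|_{\mathcal{H}^s_\rho}\le C$ for all $\rho$, hence $f\in L^2(\R^n)$ by \eqref{asymw}, and $u=f\ast\rho^s\varphi_{s,\rho}$ is of the desired form.

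To establish the $\rho$‑independence I would take the Fourier transform in $x$ of the extension equation; for almost every fixed $\xi$ the function $\rho\mapsto\widehat{u}(\xi,\rho)$ then solves the modified Bessel equation $\partial_\rho^2 v+\frac{1-s}{\rho}\partial_\rho v-|\xi|^2 v=0$, whose solution space is spanned by $\rho\mapsto(\rho|\xi|)^{s/2}K_{s/2}(\rho|\xi|)$ and $\rho\mapsto(\rho|\xi|)^{s/2}I_{s/2}(\rho|\xi|)$. Writing $\widehat{u}(\xi,\rho)=A(\xi)(\rho|\xi|)^{s/2}K_{s/2}(\rho|\xi|)+B(\xi)(\rho|\xi|)^{s/2}I_{s/2}(\rho|\xi|)$ with $A,B$ independent of $\rho$ and comparing with the expression above gives
\begin{equation*}
\widehat{f_\rho}(\xi)=\frac{\Gamma(s/2)}{2^{1-s/2}}\Big(A(\xi)+B(\xi)\,\frac{I_{s/2}(\rho|\xi|)}{K_{s/2}(\rho|\xi|)}\Big).
\end{equation*}
By the asymptotics \eqref{eq:aszero} and \eqref{eq:asymptotics-infiniteK} the ratio $I_{s/2}(z)/K_{s/2}(z)$ grows like $e^{2z}$ as $z\to\infty$, whereas $w_\rho(\xi)\ge c(1+\rho|\xi|)^{-(n+1)/2}$ by \eqref{asymw}; consequently, if $B$ were not almost everywhere zero, the norm $\|f_\rho\|_{\mathcal{H}^s_\rho}^2=\int_{\R^n}|\widehat{f_\rho}(\xi)|^2 w_\rho(\xi)\,d\xi$ would tend to infinity as $\rho\to\infty$, contradicting the uniform bound. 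Hence $B\equiv0$ and $\widehat{f_\rho}=\frac{\Gamma(s/2)}{2^{1-s/2}}A$ is independent of $\rho$, which finishes the argument.

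The hard part is precisely this $\rho$‑independence step. One has to justify that the partial Fourier transform in $x$ of a solution of the extension equation (which is uniformly elliptic, and $u$ therefore real-analytic, in the region $\rho>0$) is again a sufficiently regular solution of the associated modified Bessel equation in $\rho$, and that $\widehat{u}(\cdot,\rho)$ is a genuine function for each $\rho$ — the latter being available from the standing hypotheses via the identity \eqref{integ} established in the proof of Proposition \ref{unitary}. After that one must carry out the elimination of the exponentially growing Bessel mode rigorously from the single uniform bound on the Bergman norms. Everything else reduces to bookkeeping with the formulas \eqref{asymw}, \eqref{integration}, \eqref{integ} and the Bessel asymptotics collected in Section \ref{holosol}.
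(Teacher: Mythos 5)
Your proposal is correct and rests on the same core mechanism as the paper's proof: after taking the Fourier transform in $x$, the extension equation confines $\widehat{u}(\xi,\cdot)$ to the two-dimensional solution space spanned by $(\rho|\xi|)^{s/2}K_{s/2}(\rho|\xi|)$ and $(\rho|\xi|)^{s/2}I_{s/2}(\rho|\xi|)$, and the exponentially growing $I_{s/2}$-mode is incompatible with the uniform Bergman bound. The organisation of the converse is, however, genuinely different. The paper first manufactures the boundary datum $f$ as a weak limit of the renormalised Fourier transforms $v(\cdot,\rho_k)$ along a subsequence $\rho_k\to 0$ in $L^2(\R^n,(1+|\xi|)^{-(n+1)/2}d\xi)$, identifies $\widehat f$ with the coefficient of the $K_{s/2}$-mode, and only then eliminates the coefficient $b$ of the $I_{s/2}$-mode. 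You instead invoke the surjectivity of $T_\rho$ from Proposition \ref{unitary} to obtain, for each fixed $\rho$, a datum $f_\rho$ with $u(\cdot,\rho)=T_\rho f_\rho$ and $\|f_\rho\|_{\mathcal{H}^s_\rho}\le C$, and eliminate $B$ by letting $\rho\to\infty$; this bypasses the weak-compactness extraction entirely and yields the $\rho$-independence of the datum in one stroke, which is arguably cleaner. Both routes require the same technical justification, which you correctly flag: that $\widehat u(\xi,\cdot)$ is, for a.e.\ $\xi$, a genuine classical solution of the Bessel ODE (the paper obtains this by pairing with Schwartz functions and appealing to a lemma of \cite{BGS}). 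One small point to make airtight in your blow-up step: since $|\widehat{f_\rho}(\xi)|^2$ involves the cross term of $A$ and $B\,I_{s/2}/K_{s/2}$, restrict attention to a bounded set of positive measure, bounded away from the origin, on which $|B|$ is bounded below and $|A|$ bounded above; there the term $|B|^2e^{4\rho|\xi|}$ dominates the merely polynomially decaying lower bound $c(1+\rho|\xi|)^{-(n+1)/2}$ for $w_\rho$ from \eqref{asymw}, and Fatou's lemma delivers the contradiction with $\sup_\rho\|f_\rho\|_{\mathcal{H}^s_\rho}<\infty$.
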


\begin{proof}  
Let us first assume that $ u(x,\rho) =  f \ast \rho^{s} \varphi_{s,\rho}(x)$ for some  $ f \in L^2(\R^n) $. We have already proved in Proposition \ref{unitary} that when $ f \in \mathcal{H}^s_\rho(\R^n) $ the solution $ u = T_\rho f $ belongs to $ \mathcal{B}_s(\Omega_\rho) $ and $ \| u_\rho \|^2_{\mathcal{B}_s} =  \| f \|_{\mathcal{H}^s_\rho}^2 \leq C \|f\|_2^2.$ This proves the direct part of the theorem.

We now take up the converse.  Let us now suppose that $ u(z, \rho) \in \mathcal{B}_s $ for each $ \rho > 0 $ be such that 
\begin{equation*}
%\label{conve} 
\rho^{-n} \int_{\Omega_\rho} |u(z,\rho)|^2  \Big(1-\frac{|y|^2}{\rho^2}\Big)_+^{s-1}  \,dx\, dy \leq C.
\end{equation*} 
In view of \eqref{integ} we see that $ \widehat{u}(\xi, \rho) $ is a locally integrable function. From \eqref{asymw} it follows that the function $ v(\xi,\rho) $ defined by $ \widehat{u}(\xi,\rho) = v(\xi,\rho) (\rho |\xi|)^{s/2} K_{s/2}(\rho |\xi|) $
satisfies the uniform estimates
$$ 
\int_{\R^n} |v(\xi,\rho)|^2 (1+|\xi|)^{-(n+1)/2} d\xi \leq C  \int_{\R^n} |v(\xi,\rho)|^2 w_\rho(\xi)  d\xi   = C \int_{\R^n}  |\widehat{u}(\xi,\rho) |^2  \frac{I_{s+n/2-1}(2\rho|\xi|)} {(2\rho|\xi|)^{s+n/2-1}} \,d\xi \leq C 
$$
for all $ 0< \rho \leq 1.$ Consequently, there is a subsequence $ \rho_k $ tending to $ 0 $ and a function $ g \in L^2(\R^n, (1+|\xi|)^{-(n+1)/2} d\xi) $ such that $ v(\cdot,\rho_k) \rightarrow g $ as $ k \rightarrow \infty $ weakly in
$L^2(\R^n, (1+|\xi|)^{-(n+1)/2} d\xi) .$ If $ f $ is the tempered distribution defined by $ \widehat{f} = g $ it then follows that $ f \in \mathcal{H}^s_\rho(\R^n) $ for every $ \rho >0.$ Moreover, for any Schwartz class function $ \varphi$,
$$ 
\int_{\R^n}  u(x,\rho_k) \varphi(x) dx = \int_{\R^n}  \widehat{u}(\xi,\rho_k) \widehat{\varphi}(\xi)\, d\xi =  \int_{\R^n} v(\xi,\rho_k) (\rho_k |\xi|)^{s/2} K_{s/2}(\rho_k |\xi|) \widehat{\varphi}(\xi)\, d\xi 
$$
from which we conclude that $ u(\cdot, \rho_k) $ converges to $ f $ in the sense of distributions.

If we further assume that $ u(x,\rho) $ is a solution of the extension problem then for any Schwartz class function $ \varphi $
we have 
$$ 
\int_{\R^n} -\Delta u(x,\rho) \overline{ \varphi}(x) dx = \Big( \partial_\rho^2 +\frac{1-s}{\rho} \partial_\rho \Big)\int_{\R^n} u(x,\rho)\overline{ \varphi}(x)\, dx
$$
which by Plancherel theorem leads to the equation
$$
\int_{\R^n}  |\xi|^2  \widehat{u}(\xi,\rho) \overline{ \widehat{\varphi}}(\xi)\, d\xi = \big( \partial_\rho^2 +\frac{1-s}{\rho} \partial_\rho \big)\int_{\R^n} \widehat{u}(\xi,\rho)\overline{ \varphi}(\xi) \,d\xi.
$$
Thus we see that $ \widehat{u}(\xi,\rho) $ satisfies the equation
$$ 
|\xi|^2  \widehat{u}(\xi,\rho)  = \Big( \partial_\rho^2 +\frac{1-s}{\rho} \partial_\rho \Big) \widehat{u}(\xi,\rho).
$$
Any solution of the above equation can be written as a linear combination of two linearly independent solutions (see \cite[Lemma 2.3]{BGS}):
$$ 
\widehat{u}(\xi,\rho)  = a(\xi) (\rho |\xi|)^{s/2} K_{s/2}(\rho |\xi|) + b(\xi) (\rho |\xi|)^{s/2} I_{s/2}(\rho |\xi|).
$$
As we have proved that $ u(\cdot, \rho_k) $ converges to the  tempered distribution $ f $ as $ \rho_k $ goes to zero,  for any $ \varphi \in C_0^\infty(\R^n) $ we have
$ (\widehat{f}, \varphi)  = c  \int_{\R^n} a(\xi) \varphi(\xi) \,d\xi $ as $(\rho_k |\xi|)^{s/2} K_{s/2}(\rho_k |\xi|) $ goes to a constant and $ (\rho_k |\xi|)^{s/2} I_{s/2}(\rho_k |\xi|)$ goes to zero. Thus the distribution $ \widehat{f} $ is given by the function $ c\, a $ and so we have
$$ 
\int_{\R^n} \widehat{u}(\xi,\rho) \varphi(\xi)\, d\xi  =  \int_{\R^n} \big( \widehat{f}(\xi) (\rho |\xi|)^{s/2} K_{s/2}(\rho |\xi|) + b(\xi) (\rho |\xi|)^{s/2} I_{s/2}(\rho |\xi|)\big) \varphi(\xi) \,d\xi.
$$
This means $ u(x,\rho) = \rho^{s} f \ast \varphi_{s,\rho}(x) + u_1(x,\rho) $ where
$$ 
u_1(x,\rho) = (2\pi)^{-n} \int_{\R^n} e^{i x\cdot \xi} b(\xi) (\rho |\xi|)^{s/2} I_{s/2}(\rho |\xi|)\, d\xi.
$$
As $I_{s/2}(\rho |\xi|)$ has exponential growth, the convergence  of the integral defining $ u_1 $ imposes severe restrictions of $ b.$ Our aim is to show that $ b = 0 $ under the hypothesis on $ u.$
Using the hypothesis on $ u $ and the relation \eqref{integ} we conclude that $ u_1 \in \mathcal{B}_s(\Omega_\rho) $ and 
$$  
\int_{\R^n} |b(\xi)|^2   (\rho |\xi|)^{s} I_{s/2}(\rho |\xi|)^2 \frac{I_{s+n/2-1}(2\rho|\xi|)} {(2\rho|\xi|)^{s+n/2-1}} \,d\xi \leq C .
$$ 
In view of the exponential growth of $ I_{s/2}(\rho |\xi|) $ this is possible only if $ b =0$. Thus $ u(x,\rho) =\rho^{s} f \ast \varphi_{s,\rho}(x) $  and the hypothesis gives
$$  
\int_{\R^n} |\widehat{f}(\xi)|^2 w_\rho(\xi) \,d\xi \leq C.
$$
By taking limit as $\rho $ tends to zero we obtain $ f \in L^2(\R^n)$ and this completes the proof of the theorem.

\end{proof}

%%%%%%%%%%%%%%%%%%%%%%%%%%%%%
\subsection{Holomorphic extensions of eigenfunctions of $ \Delta_g$}
%%%%%%%%%%%%%%%%%%%%%%%%%%%%%

We now consider the real hyperbolic space $ \mathbf{H} = G/K $ where $ G = SO_e(n,1)$ and  $K = SO(n) $. Here $SO_e(n,1)$ is the identity component of the group $SO(n,1).$ Eigenfunctions of the Laplace-Beltrami operator on $ \mathbf{H}$ (more generally on non-compact Riemannian symmetric spaces) have been characterised in the literature as Poisson integrals.  In the case of $ \mathbf{H} $, the Iwasawa decomposition $ G = NAK $ is given by $ N = \R^n$, $K = SO(n) $ and $ A = \R_+.$ Thus we can identify $ \mathbf{H} $ with the upper half-space $ \R^{n+1}_+  = \R^n \times \R_+ $  equipped with the Riemannian metric $ g = \rho^{-2} (|dx|^2+d\rho^2 ).$ 

We denote by $ \Delta_{g}$ the  Laplace-Beltrami operator associated to this metric. As this operator does not behave well with conformal change of metrics, we replace this with the new operator $ L_g(w) = -\Delta_g w- \frac{n^2-1}{4} w
$,  which is conformally covariant. By letting $ g_0 = \rho^2 g $ and making use of the conformal covariant property of $ L_g $ we calculate (see e.g. \cite{CG}) 
$$ L_gw = \rho^{\frac{n+3}{2}} L_{g_0}(\rho^{-\frac{n-1}{2}}w) =   - \rho^{\frac{n-1}{2}} \rho^2(\Delta+\partial_\rho^2) (\rho^{-\frac{n-1}{2}}w).$$
A simple calculation shows that
$$ L_g  =  -\rho^2\big(\Delta w+\partial_\rho^2 \big)w +(n-1) \rho \partial_\rho w -\frac{n^2-1}{4} w.$$
As $-  \Delta_{g} = L_g + \frac{n^2-1}{4} $,  the eigenfunction equation
$$  -\Delta_{g} w = \gamma(n-\gamma)w ,\quad \gamma = (s+n)/2$$
 becomes
$$ 
L_g w  = -\rho^2( \Delta + \partial_\rho^2)w + (n-1) \rho \partial_\rho w = \big( \frac{n^2-s^2}{4} \big) w.
$$
Defining $ u = \rho^{-\frac{n-s}{2}} w $ we easily check that $ w $ satisfies the above equation if and only if $ u $ satisfies the equation 
\begin{equation*}
 \big(\Delta+\partial_\rho^2+ \frac{1-s}{\rho} \partial_\rho\big) u = 0.
 \end{equation*}
 This establishes the connection between  certain eigenfunctions of $ \Delta_g $ and solutions of the extension problem.

 For any $ \lambda \in \C $  which is not a pole of $\Gamma(\frac{n-i\lambda}{2})$  we consider the kernels 
 $$ 
 \mathcal{P}_\lambda(x,\rho) =  \pi^{-n/2} \frac{\Gamma(\frac{n-i\lambda}{2})}{\Gamma(-i\lambda)} \rho^{\frac{n-i\lambda}{2}}(\rho^2+|x|^2)^{-\frac{n-i\lambda}{2}},
 $$
 which play the role of the Poisson kernels when $ \mathbf{H} $ is identified with the group $ S = NA$, $N =\R^n$, $A = \R_+$. Using these kernels, we can define the Helgason Fourier transform and prove the basic results in harmonic analysis. For example, the spherical functions $ \varphi_\lambda(x,\rho) $ on $ \mathbf{H} $ are given by
$$ \varphi_\lambda(x,\rho)  = \int_{\R^n} \mathcal{P}_\lambda(x-u,\rho) \mathcal{P}_{-\lambda}(u,1) du.$$  From the above representation, it is easy to prove the  following property which is crucial in the study of Fourier transform on $ \mathbf{H}$
\begin{equation}
\label{property}
\varphi_{\lambda}((y,r)^{-1}(x,\rho)) = \int_{\R^n}  \mathcal{P}_\lambda(x-u,\rho) \mathcal{P}_{-\lambda}(y-u,r) du.
\end{equation}
Recalling that the group law on $ S = \R^n \times \R^+ $ is given by 
$ (x,r)(x',r') = (x+rx', rr')$, the identity \eqref{property} can be easily verified by making a change of variables in the integral defining $ \varphi_\lambda$.   Indeed,
\begin{multline*}
 \varphi_{\lambda}((y,r)^{-1}(x,\rho))= \varphi_{\lambda}((-r^{-1}y, r^{-1})(x,\rho))=\varphi_{\lambda}(-r^{-1}y+r^{-1}x, r^{-1}\rho)\\
 = \int_{\R^n}
 \mathcal{P}_\lambda((-y+x)r^{-1}-u,^{-1}r\rho) \mathcal{P}_{-\lambda}(u,1)\, du,
\end{multline*}
and perform the change of variables $u\mapsto (u-y)/r$ to conclude the proof of the claim.

Given a reasonable function $ f $ on $ \R^n $ we define its Poisson transform by the equation
$$ 
\mathcal{P}_\lambda f(x,\rho)  =  \int_{\R^n} \mathcal{P}_\lambda(x-y,\rho) f(y) \mathcal{P}_{-\lambda}(y,1) du.
$$
In view of the discussion in Subsection \ref{holosol} and due to the connection between eigenfunctions and solutions to the extension problem just described at the beginning of this subsection, the Poisson transform $ \mathcal{P}_\lambda f $ is an eigenfunction of the hyperbolic Laplacian: $ \Delta_g (\mathcal{P}_\lambda f) =  -\frac{1}{4}(n^2+\lambda^2) \mathcal{P}_\lambda f$. Recall the definition of the tube domain $\Omega_{\rho}$ in \eqref{tubeR} and the weighted Bergman space in Definition \ref{wBerg}.

We can now prove  the following result on eigenfunctions of the hyperbolic Laplacian $ \Delta_g$ as a corollary of Theorem \ref{thm:solepR}.

\begin{thm} 
\label{eigenR}
An eigenfunction $ w(x,\rho) $ of the Laplacian $ \Delta_g $ with eigenvalue $ -\frac{1}{4}(n^2-s^2) $  is the Poisson integral $ \mathcal{P}_{is}f $ with $ f \in L^2(\R^n, (1+|y|^2)^{-n+s}dy)  $ if and only if $ w(x,\rho) $ extends to $ \Omega_\rho $ as a holomorphic function, belongs to $ \mathcal{B}_s(\Omega_\rho) $ and satisfies the estimate  $\| w(\cdot,\rho) \|_{\mathcal{B}_\rho} \leq C \rho^{(n-s)/2} $ for all $ \rho >0.$

\end{thm}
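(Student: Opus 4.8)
The plan is to deduce Theorem~\ref{eigenR} from Theorem~\ref{thm:solepR} by unwinding the two substitutions that relate eigenfunctions of $\Delta_g$ to solutions of the extension problem. Recall from the discussion preceding the statement that, for $\gamma=(s+n)/2$, an eigenfunction $w$ of $\Delta_g$ with eigenvalue $-\frac{1}{4}(n^2-s^2)$ corresponds bijectively, via $u(x,\rho)=\rho^{-(n-s)/2}w(x,\rho)$, to a solution $u$ of
\[
\big(\Delta+\partial_\rho^2+\frac{1-s}{\rho}\partial_\rho\big)u=0 .
\]
Next I would rewrite the Poisson transform as a generalised Poisson integral. For $s>0$,
\[
\mathcal{P}_{is}(x,\rho)=\pi^{-n/2}\frac{\Gamma(\frac{n+s}{2})}{\Gamma(s)}\,\rho^{(n+s)/2}(\rho^2+|x|^2)^{-(n+s)/2}=\rho^{(n-s)/2}\,\rho^{s}\varphi_{s,\rho}(x),
\]
so that, writing $g(y):=f(y)\,\mathcal{P}_{-is}(y,1)$,
\[
\mathcal{P}_{is}f(x,\rho)=\rho^{(n-s)/2}\big(g\ast\rho^{s}\varphi_{s,\rho}\big)(x).
\]
Thus $w=\mathcal{P}_{is}f$ holds precisely when the associated $u=\rho^{-(n-s)/2}w$ equals the extension-problem solution $g\ast\rho^{s}\varphi_{s,\rho}$.

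Since $\mathcal{P}_{-is}(y,1)$ is a nonzero constant multiple of $(1+|y|^2)^{-(n-s)/2}$, the map $f\mapsto g=f\,\mathcal{P}_{-is}(\cdot,1)$ is a bijection of $L^2(\R^n,(1+|y|^2)^{-n+s}dy)$ onto $L^2(\R^n)$, because
\[
\int_{\R^n}|g(y)|^2\,dy=c_{n,s}\int_{\R^n}|f(y)|^2(1+|y|^2)^{-(n-s)}\,dy .
\]
Hence $f\in L^2(\R^n,(1+|y|^2)^{-n+s}dy)$ if and only if $g\in L^2(\R^n)$, and applying Theorem~\ref{thm:solepR} to $u$ we obtain: $w=\mathcal{P}_{is}f$ for some such $f$ if and only if, for every $\rho>0$, $u(\cdot,\rho)$ extends holomorphically to $\Omega_\rho$, belongs to $\mathcal{B}_s(\Omega_\rho)$, and $\|u(\cdot,\rho)\|_{\mathcal{B}_s}\le C$ uniformly in $\rho$.

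It remains to transfer these conditions from $u$ back to $w$. Since $w(x,\rho)=\rho^{(n-s)/2}u(x,\rho)$ and the factor $\rho^{(n-s)/2}$ depends on $\rho$ alone (not on $x$), $w(\cdot,\rho)$ extends holomorphically to $\Omega_\rho$ if and only if $u(\cdot,\rho)$ does, the extension being $\rho^{(n-s)/2}u(\cdot,\rho)$; consequently $w(\cdot,\rho)\in\mathcal{B}_s(\Omega_\rho)$ iff $u(\cdot,\rho)\in\mathcal{B}_s(\Omega_\rho)$, with $\|w(\cdot,\rho)\|_{\mathcal{B}_s}=\rho^{(n-s)/2}\|u(\cdot,\rho)\|_{\mathcal{B}_s}$. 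Therefore the uniform estimate $\|u(\cdot,\rho)\|_{\mathcal{B}_s}\le C$ is exactly $\|w(\cdot,\rho)\|_{\mathcal{B}_s}\le C\rho^{(n-s)/2}$, the growth condition in the statement, and the theorem follows. Since the substance is already contained in Theorem~\ref{thm:solepR}, I expect no serious obstacle here; the only delicate point is keeping the normalising constants straight, so as to confirm that the weight $(1+|y|^2)^{-n+s}$ for $f$ is precisely the one under which $g$ lands in $L^2(\R^n)$, and that the scaling factor $\rho^{(n-s)/2}$ converts the uniform Bergman bound into the stated one.
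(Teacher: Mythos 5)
Your proposal is correct and is essentially the paper's own proof: the authors likewise set $g(y)=f(y)\mathcal{P}_{-is}(y,1)$, note that $u=\rho^{-(n-s)/2}w=\rho^{s}g\ast\varphi_{s,\rho}$ with $g\in L^2(\R^n)$ iff $f\in L^2(\R^n,(1+|y|^2)^{-n+s}dy)$, and invoke Theorem~\ref{thm:solepR}. Your version merely spells out the kernel identity $\mathcal{P}_{is}(x,\rho)=\rho^{(n-s)/2}\rho^{s}\varphi_{s,\rho}(x)$ and the scaling of the Bergman norm, which the paper leaves implicit.
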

\begin{proof}
If we let $ w(x,\rho) = \mathcal{P}_{is}f(x,\rho) $ it follows that $ u(x,\rho) = \rho^{-\frac{(n-s)}{2}} w(x,\rho) = \rho^{s} g \ast \varphi_{s,\rho}(x) $ where $ g(y) = f(y) \mathcal{P}_{-is}(y,1).$ Observe that $ g \in L^2(\R^n) $ if and only if $  f \in L^2(\R^n, (1+|y|^2)^{-n+s}dy)  .$ The theorem follows immediately from Theorem \ref{thm:solepR}.

\end{proof}

%%%%%%%%%%
\section{ Eigenfunctions on complex hyperbolic spaces}
\label{complex}
%%%%%%%%%%%%%%%%%%%%%%%%

In this section we consider eigenfunctions of the Laplace-Beltrami operator on the complex hyperbolic space $ X = G/K $ where $ G = SU(n+1,1) $ and $ K= SU(n)$. Here $SU(n)$ is the special unitary group, that is, the Lie group of $n \times n$ unitary matrices with determinant $1$. 
As explained in the Introduction, it is known that the eigenfunctions holomorphically extend to a domain called the crown domain in the complexification $X_{\C} $ of $ X.$ As in the case of real hyperbolic case we identify $ X $ with a solvable group $ S = NA $ where $ G=NAK $ is the Iwasawa decomposition.
And as in the case of real hyperbolic space,  we treat $ X $ as the solvable group $ NA $  and consider eigenfunctions $ w(n,a) $ written in the coordinates $ n \in N $ and $ a \in A.$ In this setting we can fix $ a $ and consider the holomorphic extension of $ w(\cdot,a) $ to a domain in the complexification of $ N.$ We are going to show in this context similar results to what we have proved in Section \ref{real} for the real hyperbolic space.

%%%%%%%%%%%%%%%%%%%%%%%%%%%%%
\subsection{Solvable extension of the Heisenberg group} 
\label{solvable}
%%%%%%%%%%%%%%%%%%%%%%%%%%%%%

In the case of the complex hyperbolic space, the Iwasawa decomposition of $ G = SU(n+1,1) $ is explicitly given by $ N = \He^n$, $K = SU(n) $ and $ A = \R_+.$ Here $ \He^n $ is the Heisenberg group $\He^n=\C^n\times \R$ equipped with the group law
\begin{equation}
\label{law}
(z,\xi)(z',\xi')=\big(z+z', \xi+\xi'+\frac12 \Im (z\cdot \bar{z'})\big),
\end{equation}
where $z,z'\in \C^n$ and $\xi, \xi' \in \R$. We will often use real coordinates: thus identifying $ \He^n $ with $ \R^{2n+1} $ and considering coordinates $ (x,u,\xi) $ we can write the group law as
$$
(x,u,\xi)(y,v,\eta) = \big(x+y,u+v,\xi+\eta+\frac{1}{2}(u\cdot y-v\cdot x)\big), 
$$
where $x,u,y,v\in \R^n$ and $\xi,\eta \in \R$. 
Note that  $ \Im \big((x+iu)\cdot(y-iv)\big)= u\cdot y-v\cdot x = [(x,u)(y,v)] $ is the symplectic form on $ \R^{2n}.$

Let us recall that the convolution of $ f $ with $ g $ on $\He^n$ is defined by
$$
f*g(x) = \int_{\He^n} f(xy^{-1})g(y) \,dy, \quad x,y\in \He^n.
$$
 With  $x=(z,\xi)$ and $y=(z',\xi')$ the above takes the form
$$
f*g(z,\xi) = \int_{\He^n} f\big((z,\xi)(-z',-\xi')\big)g(z',\xi') \,dz' \,d\xi'.
$$
Let $ f^\lambda $ stand for the inverse Fourier transform of $ f $ in the \textit{last variable} $\xi$
\begin{equation}
\label{eq:inverseFT}
f^\lambda(z) = \int_{-\infty}^\infty f(z,\xi) e^{i\lambda \xi} \,d\xi.
\end{equation}
A simple computation shows that
\begin{equation}
\label{twist}
(f*g)^\lambda(z) = \int_{\C^n} f^\lambda(z-z')g^\lambda(z') e^{\frac{i}{2}\Im(z\cdot \bar{z'})} dz'.
\end{equation}
The convolution appearing on the right hand side is called the $ \lambda$-twisted convolution and is denoted by $ f^\lambda*_\lambda g^\lambda(z)$.

Observe that the center of this group is given by $ Z = \{ 0\} \times \R.$ Along with $ Z ,$ the following one parameter subgroups
$$ 
\Gamma_j = \{ (t e_j,0): t \in \R \},\qquad \Gamma_{n+j} = \{ (it e_j,0): t \in \R \}, \qquad j =1,2,..,n
$$ give rise to $ (2n+1) $ left invariant vector fields in the usual way. These are explicitly given by
\begin{equation}
\label{vfields}
X_j = \frac{\partial}{\partial{x_j}}+\frac{1}{2}u_j \frac{\partial}{\partial \xi},\qquad Y_j = \frac{\partial}{\partial{u_j}}-\frac{1}{2}x_j \frac{\partial}{\partial \xi},\qquad  T = \frac{\partial}{\partial \xi}.
\end{equation}
These vector fields form a basis for the Heisenberg Lie algebra $ \mathfrak{h}_n.$ It is easily checked that the only non-trivial Lie brackets in $ \mathfrak{h}_n $ are given by $ [ X_j, Y_j] = T$ as all other brackets vanish.  
The second order operator $ \mathcal{L} = -\sum_{j=1}^n ( X_j^2 +Y_j^2) $, known as the sublaplacian, plays the role of  Laplacian $ \Delta $ for the group $ \He^n$. Though not elliptic this operator shares several properties with its counterpart $ \Delta $ on $ \R^n$.

The group $ \He^n $  admits a family of automorphisms indexed by $ \R_+ $ and given by the non-isotropic dilations $ \delta_r (z,a) = ( rz, r^2 a).$ With respect to these dilations, the vector fields $ X_j, Y_j, T $ are homogeneous of degree one and $ \mathcal{L}$ is homogeneous of degree 2. Recall that a fundamental solution for $ \Delta $ on $ \R^n $ is given by a constant multiple of $ |x|^{-n+2}$, $n \neq 2 $. 
In the same way, a fundamental solution for $ \mathcal{L} $ is given by a constant multiple of $ |(z,\xi)|^{-Q+2} $ where $ Q = 2n+2$ and $ |(z,\xi)|^4 = |z|^4+16 \xi^2.$ The function $ (z,\xi) \rightarrow |(z,\xi)| $ is known as the Koranyi norm, which is homogeneous of degree one with respect to the dilations $ \delta_r$.

The quantity $ Q = 2n+2 $ is known as the homogeneous dimension for the following reason. The group $ \He^n $ turns out to be unimodular and the Haar measure is simply given by the Lebesgue measure $ \,dz \,d\xi $ on $ \C^n \times \R$. For $ f \in L^1(\He^n) $ we have
$$  
\int_{\He^n} f(\delta_r(z,\xi))\, dz\, d\xi =  r^{-Q} \int_{\He^n} f(z,\xi)\, dz\, d\xi .
$$ 
Moreover, on the Koranyi sphere $ K_1 = \{ (z,\xi) : |(z,\xi)| = 1 \} $ there exists a measure $ d\sigma $ so that the Haar measure on $  \He^n $ has the polar decomposition
$$ 
\int_{\He^n} f(z,\xi) \,dz \,d\xi = \int_0^\infty \Big( \int_{K_1} f(\delta_r(z,\xi)) d\sigma(z,\xi) \Big)   r^{Q-1} \,dr.
$$ 
By defining $ \sigma_r $ on the sphere $ K_r = \{ (z,\xi) : |(z,\xi)| = r \} $  by the prescription
$$ 
\int_{K_r} f(z,\xi) d\sigma_r = \int_{K_1} f(\delta_r(z,\xi)) d\sigma 
$$ 
we can write the polar decomposition in the form
$$ 
\int_{\He^n} f(z,\xi)\, dz\, d\xi = \int_0^\infty \Big( \int_{K_r} f(z,\xi)  d\sigma_r(z,\xi) \Big)   r^{Q-1} dr.
$$

As $ \R_+ $ acts on $ \He^n $ as automorphisms, we can form the semi-direct product $ S = \He^n \times \R_+$. The group law in $ S $ is given by
$$ 
(z,\xi,\rho) (w,\eta,\rho') = ((z,\xi)\delta_{\sqrt{\rho}}(w,\eta), \rho \rho') = \big(z+\sqrt{\rho}w, \xi+ \rho \eta+\frac{1}{2} \sqrt{\rho}  \Im(z\cdot \bar{w}), \rho\rho'\big).
$$
This group turns out to be a solvable group which is non-unimodular. Indeed, the left Haar measure on $ S $ is $ \rho^{-n-2}\, dz\, d\xi\, d\rho $ whereas the right Haar measure is $ \rho^{-1}\, dz\, d\xi\, d\rho.$ 
The Lie algebra $ \mathfrak{s} $ of the Lie group $ S $ can be identified with $ \R^{2n+1} \times \R.$  An easy calculation shows that the vector fields $ E_0 = \rho \partial_\rho$, $ E_j = \sqrt{\rho}X_j$, $E_{n+j} = \sqrt{\rho}Y_j $ for $ j = 1,2,...,n $ and $ E_{2n+1} = \rho T $ are left invariant.  The non zero Lie brackets are given by
$$  
[E_0, E_j] = \frac{1}{2} E_j, \qquad [E_0, E_{n+j}] = \frac{1}{2} E_{n+j}, \qquad [E_0, E_{2n+1}] = E_{2n+1}.
$$ 
Equipping $ \mathfrak{s} $ with the standard inner product on $ \R^{2n+2} $ these $2n+2 $ vector fields can be made to form an orthonormal basis for $ \mathfrak{s}. $ 
This induces a Riemannian metric on $ \mathfrak{s}.$

The Laplace-Beltrami operator on the Riemannian manifold $ S $ can be expressed in terms of the vector fields $ E_j.$ Indeed, we have
$$ 
\Delta_S = \sum_{j=0}^{2n+1} E_j^2-(n+1) E_0   = - \rho \mathcal{L}+\rho^2 \partial_{\xi}^2 +(\rho \partial_\rho)^2 -(n+1) \rho \partial_\rho.
$$
Recalling the expressions for the vector fields $ X_j $ and $ Y_j $ in \eqref{vfields} we have the more explicit formula
$$ 
\Delta_S = \rho \Big( \Delta_{\R^{2n}}+\frac{1}{4}(|x|^2+|u|^2) \partial_\xi^2-\sum_{j=1}^n \big(x_j \frac{\partial}{\partial u_j} - u_j \frac{\partial}{\partial x_j}\big) \partial_\xi \Big)+\rho^2 \partial_\xi^2 + (\rho \partial_\rho)^2 -(n+1) \rho\partial_\rho.
$$
As in the case of the hyperbolic Laplacian $ \Delta_g $ on $ \R^n \times \R_+$ we now consider the following eigenvalue problem 
$$ 
- \Delta_S \widetilde{W}(x,u,\xi,\rho) = \gamma(n+1-\gamma)\widetilde{W}(x,u,\xi,\rho),\quad \gamma=\frac{1}{2}(n+1+s),\quad s>0.
$$
We define $ W $ and $ U $ in terms of $ \widetilde{W} $ as follows:
$$  
\widetilde{W}(x,u,\xi,\rho)= \rho^{\frac{n+1-s}{2}}W(x,u,\xi,\rho) = \rho^{\frac{n+1-s}{2}}U(2^{-1/2}(x,u),2^{-1}\xi,\sqrt{ 2\rho}).
$$
An easy calculation establishes the following relation.

\begin{prop} 
\label{conne}
Let $s>0$. The function $ \widetilde{W} $ is an eigenfunction of the Laplace-Beltrami operator $ \Delta_S $ with eigenvalue $ -\gamma(n+1-\gamma)$, $\gamma=\frac{1}{2}(n+1+s)$ if and only if the function $ U $ defined as
$$
\widetilde{W}(x,u,\xi,\rho)=\rho^{\frac{n+1-s}{2}}U(2^{-1/2}(x,u),2^{-1}\xi,\sqrt{ 2\rho})
$$
 satisfies the equation
\begin{equation} 
\label{eph}
\big( -\mathcal{L} + \partial_\rho^2 +\frac{1-2s}{\rho} \partial_\rho +\frac{1}{4}\rho^2 \partial_\xi^2 \big) U(x,u,\xi,\rho) = 0.
\end{equation}
\end{prop}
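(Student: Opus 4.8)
The plan is to establish the equivalence by a direct computation carried out in two stages: first peel off the factor $\rho^{(n+1-s)/2}$ relating $\widetilde W$ and $W$, then perform the anisotropic change of variables $(x,u,\xi,\rho)\mapsto\big(2^{-1/2}(x,u),2^{-1}\xi,\sqrt{2\rho}\big)$ relating $W$ and $U$. Both operations are invertible, so it suffices to verify that each one turns the relevant equation into the next. Note first that, with $\gamma=\frac{1}{2}(n+1+s)$, the eigenvalue is $\gamma(n+1-\gamma)=\frac{1}{4}\big((n+1)^2-s^2\big)$.

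First I would substitute $\widetilde W=\rho^{a}W$ with $a=\frac{n+1-s}{2}$ into $\Delta_S=-\rho\mathcal{L}+\rho^2\partial_\xi^2+(\rho\partial_\rho)^2-(n+1)\rho\partial_\rho$. Since $\mathcal{L}$ and $\partial_\xi$ involve only the $\He^n$ variables, they commute with multiplication by $\rho^{a}$, so only $(\rho\partial_\rho)^2-(n+1)\rho\partial_\rho$ requires attention. Writing $D=\rho\partial_\rho$ one has $D(\rho^{a}W)=\rho^{a}(a+D)W$, hence
\[
\big(D^2-(n+1)D\big)(\rho^{a}W)=\rho^{a}\Big(D^2+(2a-n-1)D+a(a-n-1)\Big)W;
\]
for our choice of $a$ the first-order coefficient $2a-n-1$ equals $-s$ and the zeroth-order term $a(a-n-1)$ equals $-\frac{1}{4}\big((n+1)^2-s^2\big)$, which exactly cancels the eigenvalue on the other side. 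Using $D^2=\rho\partial_\rho+\rho^2\partial_\rho^2$ and then dividing through by $\rho$, the eigenvalue equation $-\Delta_S\widetilde W=\gamma(n+1-\gamma)\widetilde W$ becomes
\[
\mathcal{L}W-\rho\,\partial_\xi^2 W-\rho\,\partial_\rho^2 W+(s-1)\partial_\rho W=0 .
\]

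Next I would carry out the change of variables. On the Heisenberg factor the map is the dilation $\delta_{2^{-1/2}}$, and since $X_j,Y_j$ are homogeneous of degree one one gets $\mathcal{L}_{(x,u,\xi)}W=\frac{1}{2}(\mathcal{L}U)$ evaluated at the new variables; likewise $\partial_\xi=\frac{1}{2}\partial_{\xi'}$ yields $\partial_\xi^2 W=\frac{1}{4}(\partial_{\xi'}^2U)$, while $\rho'=\sqrt{2\rho}$ gives $\partial_\rho=\frac{1}{\rho'}\partial_{\rho'}$ and $\partial_\rho^2=\frac{1}{\rho'^2}\partial_{\rho'}^2-\frac{1}{\rho'^3}\partial_{\rho'}$. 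Substituting these together with $\rho=\rho'^2/2$ into the previous display, the two first-order terms combine into $\frac{2s-1}{2\rho'}\partial_{\rho'}U$; multiplying the resulting identity by $-2$ produces
\[
\Big(-\mathcal{L}+\partial_\rho^2+\frac{1-2s}{\rho}\partial_\rho+\frac{1}{4}\rho^2\partial_\xi^2\Big)U=0 ,
\]
that is, exactly \eqref{eph} (we revert to unprimed letters for the arguments of $U$). Running the whole computation backwards gives the converse implication.

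The only point that needs care is the bookkeeping: one must track correctly the powers of $2$ coming from the anisotropic scaling $\delta_{2^{-1/2}}$ together with the $\sqrt{2\rho}$ substitution, and verify that the zeroth-order term produced by commuting $\rho^{(n+1-s)/2}$ past $(\rho\partial_\rho)^2-(n+1)\rho\partial_\rho$ coincides with $\gamma(n+1-\gamma)$ so that the two cancel. Apart from that, everything is an application of the Leibniz and chain rules, so there is no genuine obstacle.
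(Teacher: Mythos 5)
Your computation is correct and is precisely the ``easy calculation'' the paper alludes to without writing out: conjugating $(\rho\partial_\rho)^2-(n+1)\rho\partial_\rho$ by $\rho^{(n+1-s)/2}$ to cancel the eigenvalue $\frac{1}{4}((n+1)^2-s^2)$, then using the homogeneity of $X_j,Y_j$ under $\delta_{2^{-1/2}}$ together with $\rho'=\sqrt{2\rho}$ to land on \eqref{eph}. All the coefficients ($2a-n-1=-s$, the combined first-order term $\frac{1-2s}{\rho}$, and the factor $\frac{1}{4}\rho^2$ on $\partial_\xi^2$) check out, and invertibility of both substitutions gives the equivalence.
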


We will make use of this connection between eigenfunctions of $ \Delta_S $ and solutions $ U $ of the equation~\eqref{eph}. The holomorphic properties of $ W $ (and $\widetilde{W}$) follow from that of $ U.$  By studying solutions of the equation \eqref{eph} with certain initial conditions, we will show that the solutions extend holomorphically to certain domains in the complexification of the Heisenberg group, which is naturally identified with $ \C^{2n+1}.$ Such a connection then allows us to interpret the results as properties of eigenfunctions of $ \Delta_S.$

%%%%%%%%%%%%%%%%%%%%%%%%%%%%%%%%%%%%
\subsection{An extension problem for the sublaplacian on  $ \He^n$ and the Gutzmer formula}  
%%%%%%%%%%%%%%%%%%%%%%%%%%%%%%%%%%%%

By the extension problem for the sublaplacian $ \mathcal{L}$ on $ \He^n $ we mean the following initial value problem, for $s>0$:
\begin{equation}
\label{epg}
\big( -\mathcal{L} + \partial_\rho^2 +\frac{1-2s}{\rho} \partial_\rho +\frac{1}{4}\rho^2 \partial_\xi^2 \big) U(x,u,\xi,\rho) = 0,\qquad  U(x,u,\xi,0) = f(z,\xi).
 \end{equation}
 As remarked earlier, the coordinates on $ \He^n $ will be denoted by $ (x,u,\xi).$ This change of notation is necessitated since we have to complexify the variables $x,u$ and $\xi.$
The extension problem \eqref{epg} has been studied extensively in the literature, see the works by Frank et al. \cite{FGMT}, M\"ollers et al. \cite{MOZ} and the authors \cite{RTimrn}. A solution of this problem is explicitly given by (see e.g. \cite[Theorem 1.2]{RTimrn})
 $$ 
 U(x,u,\xi,\rho) = \rho^{2s}  f \ast \Phi_{s,\rho}(x,u,\xi),\quad x, u \in \R^n,\quad \xi \in \R, \quad \rho>0 
 $$
 where the kernel $ \Phi_{s,\rho}$ is 
\begin{equation}
\label{Phi}
\Phi_{s,\rho}(x,u,\xi) = \frac{2^{n+1+s} }{ \pi^{n+1}\Gamma(s)}  \Gamma\Big(\frac{n+1+s}{2}\Big)^2 \big((\rho^2+|x|^2+|u|^2)^2+16\xi^2\big)^{- \frac{n+1+s}{2}} . 
\end{equation}
We make the observation that $ \Phi_{s,\rho}(x,u,\xi) $ has a holomorphic extension to $ \C^n \times \C^n \times \R $ as (below $C_{n,s}$ is the constant in \eqref{Phi})
$$  
\Phi_{s,\rho}(z,w,\zeta) =C_{n,s} \big( (\rho^2+z^2+w^2)^2+16 \zeta^2\big)^{-(n+1+s)/2},\quad z =x+iy,\quad w =u+iv, \quad \zeta = \xi+i\eta 
$$
where $ z^2 =(x+iy)^2 = \sum_{j=1}^n(x_j+iy_j)^2 $ (and analogously for $w$ and $\zeta$) provided 
$$ 
|y|^2+|v|^2+4|\eta| <\rho^2.
$$ 
Therefore, it is reasonable to expect that under suitable condition on $ f $, the function $U(x,u,\xi ,\rho)$
can be holomorphically extended to certain domain in $ \C^n \times \C^n \times \C$.  However, this domain has to be invariant under translations by elements of $ \He^n$. We will show that for $ f \in L^2(\He^n) $ the solution $ U$ defined above holomorphically extends to the domain $ \Omega_{\rho/4} $ where 
\begin{equation}
\label{tube}
\Omega_r = \Big\{ (z,w,\zeta) \in \C^{2n+1}: \big|\operatorname {Im}(z,w,\zeta-\frac{1}{4}(z\bar{w}-w\bar{z}))\big|< r \Big\}.
 \end{equation}

In order to proceed further, we need to recall the spectral decomposition of the sublaplacian $ \mathcal{L}.$  We begin by defining the scaled Laguerre functions of type $n-1$
$$
\varphi_{k}^{\lambda}(z)=L_k^{n-1}\Big(\frac12|\lambda||z|^2\Big)e^{-\frac14|\lambda||z|^2}.
$$
Here $L_k^{n-1}$ are the Laguerre polynomials of type $n-1$, see \cite[Chapter 1.4]{STH} for the definition and properties. 
Moreover, we use the notation 
\begin{equation*}
%\label{varphir}
\varphi_k^{\lambda}(x,u):=L_k^{n-1}\Big(\frac12|\lambda|(x^2+u^2)\Big)e^{-\frac14|\lambda|(x^2+u^2)},
\end{equation*} 
and the definition above remains valid for $x,u\in \C^n$.
 Let $ d\mu(\lambda) = (2\pi)^{-n-1}|\lambda|^n \, d\lambda $, which plays the role of the Plancherel measure for $ \He^n.$ For any $ f \in L^2(\He^n) $ we have the decomposition
\begin{equation}
\label{decol}
f(x,u,\xi) =  \int_{-\infty}^\infty  e^{-i\lambda \xi} \big(\sum_{k=0}^\infty f^\lambda \ast_\lambda \varphi_k^\lambda(x,u)\big)  \, d\mu(\lambda) 
\end{equation}
where $ f^\lambda(x,u) $ stands for the inverse Fourier transform of $ f $ in the last variable as defined in \eqref{eq:inverseFT} and, by \eqref{twist},
$$ 
f^\lambda \ast_\lambda \varphi_k^\lambda(x,u) = \int_{\R^{2n}} f^\lambda(x-a,u-b) \varphi_k^\lambda(a,b) e^{\frac{i}{2}\lambda(u \cdot a-x \cdot b)} \,da\, db.
$$
In view of the fact that the functions $ e^{i\lambda \xi}\varphi_k^{\lambda}(x,u) $ are eigenfunctions of the sublaplacian with eigenvalues $ (2k+n)|\lambda| $, the expression in \eqref{decol} gives the spectral decomposition of $ f $ in terms of eigenfunctions of the sublaplacian.  The Plancherel theorem then reads as
$$ \int_{\He^n} |f(x,u,\xi)|^2 dx\,du\,d\xi = c_n  \int_{-\infty}^\infty  \big(\sum_{k=0}^\infty \| f^\lambda \ast_\lambda \varphi_k^\lambda\|_2^2 \big)  \,d\mu(\lambda) $$
for an explicit constant $ c_n.$ Under some assumptions on the decay of $ \| f^\lambda \ast_\lambda \varphi_k^\lambda\|_2 $ as a function of $ k $, the function $ f $ will extend to $ \C^n \times \C^n \times \C $ as a holomorphic function. It is then natural to ask for a formula for the $ L^2(\He^n) $ norm of the extended function. An answer to this question is provided by the so called Gutzmer's formula.

Let $G_n$ be the Heisenberg motion group, that is the semi-direct product $G_n=\He^n\ltimes U(n)$, where $U(n)$ is the group of $n\times n$ complex unitary matrices acting on $\He^n$ by the automorphisms
$$
\sigma\cdot (x,u,\xi)=(\sigma \cdot(x, u),\xi), \quad \sigma \in U(n).
$$
The Heisenberg motion group acts on $\He^n$ in the following way:
$$
(x,u,\xi,\sigma)(x',u',\xi')=\big((x,u)+\sigma\cdot(x',u'),\xi+\xi'+\frac12 \Im \sigma \cdot(x'+iu')\overline{(x+iu)})\big).
$$
This action has a natural extension to $ \C^n \times \C^n \times \R $. Let $dg $ stand for the Haar measure on $ G_n.$

\begin{thm}[\cite{TPac} Theorem 4.2] 
\label{thm:gutzmer}
Let $ F $ be an entire function on $ \C^{2n+1}$ and let $ f $ stand for the restriction of $ F $ to $ \He^n.$ Then we have the identity
$$ 
\int_{G_n} |F(g\cdot(z,w,\zeta))|^2 dg = c_n \int_{-\infty}^\infty e^{\lambda(u \cdot y-v \cdot x)} e^{2\lambda \eta} \Big( \sum_{k=0}^\infty \|f^\lambda \ast_\lambda \varphi_k^\lambda\|_2^2 \frac{k!(n-1)!}{(k+n-1)!} \varphi_k^\lambda(2iy,2iv) \Big) d\mu(\lambda)
$$
under the assumption that either the left hand side or the right hand side is finite.
\end{thm}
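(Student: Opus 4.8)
The plan is to derive the identity from the spectral decomposition \eqref{decol} together with the orthogonality relations satisfied by the scaled Laguerre functions $\varphi_k^\lambda$ under $\lambda$-twisted convolution and under the action of $U(n)$. Since both sides depend only on the restriction $f$ of $F$ to $\He^n$, and since the identity is asserted under the proviso that one of the two sides is finite, I would first reduce to the case where $\sum_k \|f^\lambda\ast_\lambda\varphi_k^\lambda\|_2^2$ has enough decay in $k$ and in $\lambda$ for all the interchanges of sums and integrals below to be legitimate (for instance $f$ of Schwartz type), and then recover the general statement at the end by monotone convergence, using that after performing the $U(n)$-integration the right-hand side is a sum of nonnegative terms, so the equality persists as an identity in $[0,\infty]$.

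Next I would record how $G_n=\He^n\ltimes U(n)$ acts on the building blocks. Writing the spectral piece of $f$ at level $(k,\lambda)$ as $e^{-i\lambda\xi}\,f^\lambda\ast_\lambda\varphi_k^\lambda(x,u)$, one checks that left translation by an element of $\He^n$ turns $f^\lambda\ast_\lambda\varphi_k^\lambda$ into a $\lambda$-twisted translate of itself times a unimodular factor, while a rotation $\sigma\in U(n)$ simply replaces $f^\lambda$ by $f^\lambda\circ\sigma^{-1}$; crucially, since $\varphi_k^\lambda$ is radial, twisted convolution with $\varphi_k^\lambda$ commutes with $\sigma$, so the $(k,\lambda)$-level is preserved by all of $G_n$. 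Passing to the complexification, the real parameter $(x,u,\xi)$ is replaced by the complex point $(z,w,\zeta)=(x+iy,u+iv,\xi+i\eta)$, and it is precisely the symplectic cocycle in the group law \eqref{law} together with the center variable that produce the factors $e^{\lambda(u\cdot y-v\cdot x)}e^{2\lambda\eta}$. I would then expand $|F(g\cdot(z,w,\zeta))|^2$ via \eqref{decol} applied to the entire function $F$, and integrate $dg$.

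The heart of the argument is the computation of the resulting integral over $G_n$. Integrating first over $\He^n$, the $\lambda$-twisted Plancherel theorem forces the pieces at levels $(k,\lambda)$ and $(j,\lambda')$ to be orthogonal unless $\lambda=\lambda'$, and within a fixed $\lambda$ reduces the $\He^n$-integral to a Hilbert--Schmidt inner product of Weyl-transformed operators; integrating then over $U(n)$ and invoking Schur orthogonality for the matrix coefficients of the oscillator representation — equivalently, the fact that $\varphi_k^\lambda\ast_\lambda\varphi_j^\lambda$ is a multiple of $\delta_{kj}\varphi_k^\lambda$ together with radiality — kills the cross-terms $k\ne j$. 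The diagonal term at level $(k,\lambda)$ is left equal to $\|f^\lambda\ast_\lambda\varphi_k^\lambda\|_2^2$ multiplied by an explicit integral of $\varphi_k^\lambda$ against itself, $\lambda$-twisted-translated to the purely imaginary point $(iy,iv)$. Evaluating that last integral through the addition (Mehler-type) formula for the special Hermite functions yields exactly $\frac{k!(n-1)!}{(k+n-1)!}\,\varphi_k^\lambda(2iy,2iv)$, and assembling the pieces together with the exponential prefactor gives the stated formula.

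I expect the main obstacle to be this final evaluation: pinning down the closed form of the twisted self-convolution of $\varphi_k^\lambda$ at the complex point with the precise constant $\frac{k!(n-1)!}{(k+n-1)!}$, which requires careful use of the generating function for the Laguerre polynomials and meticulous bookkeeping of the extra factors generated by the symplectic form under complexification, as well as a clean justification of interchanging the $k$-sum with the $G_n$-integration — which is exactly what the hypothesis ``either side is finite'' is there to supply. A secondary technical point is making precise the region in $(z,w,\zeta)$ in which the term-by-term complexification of \eqref{decol} converges, but this is not needed for the $L^2$-identity itself, which holds as an equality of (possibly infinite) nonnegative quantities.
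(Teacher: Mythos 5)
The paper does not prove Theorem \ref{thm:gutzmer}; it is imported verbatim from \cite{TPac} (Theorem 4.2), so there is no in-paper proof to compare against. Your outline reproduces the essential structure of the argument in that reference: reduce the $\He^n$-integral to a Plancherel computation via translation invariance, diagonalise in $k$ by averaging over $U(n)$ and Schur orthogonality (you are right that the twisted Plancherel alone does not kill the $k\neq j$ cross-terms, since the complex translation does not commute with the spectral projections), and evaluate the resulting twisted self-convolution of $\varphi_k^\lambda$ at the purely imaginary point to produce the factor $\frac{k!(n-1)!}{(k+n-1)!}\varphi_k^\lambda(2iy,2iv)$, with the density-plus-monotone-convergence step supplying the ``either side finite'' hypothesis. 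This is essentially the same approach as the cited proof, and I see no gap in the plan beyond the technical bookkeeping you already flag.
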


We would like to apply Gutzmer's formula to the holomorphic extension $ U(z,w,\zeta,\rho) $ of the solution $ U = \rho^{2s} f \ast \Phi_{s,\rho} $ of the extension problem. To do this it is  convenient to work with the functions
\begin{equation}
\label{varph}
 \varphi_{s,\delta}(z,a) = \Big((\delta+\frac{1}{4}|z|^2)^2+a^2\Big)^{-\frac{n+1+s}{2}}, \quad z=x+iu.
 \end{equation}
Observe that 
\begin{equation}  
\label{Phisr}
\Phi_{s,\rho}(z,a) = \frac{2^{-(n+1+s)} }{ \pi^{n+1}\Gamma(s)}  \Gamma\Big(\frac{n+1+s}{2}\Big)^2 \varphi_{s,\delta}(z,a)
\end{equation}
 with $ \delta = \frac{1}{4}\rho^2.$ We are therefore led to compute $ f^\lambda \ast_\lambda \varphi_{s,\delta}^\lambda \ast_\lambda \varphi_k^\lambda$. As $ \varphi_{s,\delta}^\lambda $ is radial, we  can write
 \begin{equation*}  
 \varphi_{s,\delta}^\lambda(z) = (2\pi)^{-n} |\lambda|^n \sum_{k=0}^\infty  c_{k,\delta}^\lambda(s) \varphi_k^\lambda(z) 
 \end{equation*}
and  the twisted convolution $\varphi_{s,\delta}^\lambda \ast_\lambda\varphi_k^\lambda$ is a constant multiple of $ \varphi_k^\lambda$, i.e.,
$$
\varphi_{s,\delta}^\lambda \ast_\lambda \varphi_k^\lambda =c_n c_{k,\delta}^\lambda(s) \varphi_k^\lambda(z).
$$
Hence it is enough to calculate the Laguerre coefficients   $ c_{k,\delta}^\lambda(s) $ of $\varphi_{s,\delta}^\lambda(z)$.  These constants are explicitly known and are given  in terms of the auxiliary function defined, for $ a, b \in \R_+ $ and $ c \in \R$, as
\begin{equation*}
%\label{L}
L(a,b,c) = \int_0^\infty e^{-a(2x+1)}x^{b-1}(1+x)^{-c} dx.
\end{equation*} 
\begin{prop} [\cite{CRT} Lemma 3.8, \cite{RT} Proposition 3.2]
\label{propck}
For any $ \delta > 0 $ and  $ 0 < s <  \frac{n+1}{2} $ we have
$$  
c_{k,\delta}^\lambda(s) = \frac{(2\pi)^{n+1}|\lambda|^s}{ \Gamma(\frac{n+1+s}{2})^2} L\Big(\delta |\lambda|, \frac{2k+n+1+s}{2}, \frac{2k+n+1-s}{2}\Big).
$$

\end{prop}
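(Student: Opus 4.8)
The plan is to compute the Laguerre expansion of $\varphi_{s,\delta}^\lambda$ directly. By \eqref{eq:inverseFT} and \eqref{varph}, with $\nu:=\frac{n+1+s}{2}$,
\begin{equation*}
\varphi_{s,\delta}^\lambda(z)=\int_{-\infty}^\infty\big((\delta+\tfrac14|z|^2)^2+a^2\big)^{-\nu}e^{i\lambda a}\,da,
\end{equation*}
a radial function of $z$ which is even in $\lambda$, so it suffices to treat $\lambda>0$. Writing $(\delta+\tfrac14|z|^2)^2+a^2=(\delta+\tfrac14|z|^2+ia)(\delta+\tfrac14|z|^2-ia)$ and inserting the Euler--Laplace representation $w^{-\nu}=\Gamma(\nu)^{-1}\int_0^\infty e^{-pw}p^{\nu-1}\,dp$ (valid since the real parts are positive) for each factor, Fubini's theorem together with the identity that $\int_{-\infty}^\infty e^{i(\lambda-p+q)a}\,da$ is $2\pi$ times the Dirac mass at $p=q+\lambda$ gives, after integrating out $p$,
\begin{equation*}
\varphi_{s,\delta}^\lambda(z)=\frac{2\pi}{\Gamma(\nu)^2}\int_0^\infty e^{-(2q+\lambda)(\delta+\frac14|z|^2)}(q+\lambda)^{\nu-1}q^{\nu-1}\,dq .
\end{equation*}
The gain is that the exponent in $|z|^2$ is now \emph{linear}.

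With this in hand I would use the generating function $\sum_{k\ge0}L_k^{n-1}(x)w^k=(1-w)^{-n}e^{-xw/(1-w)}$, valid for $|w|<1$, with $x=\tfrac12\lambda|z|^2$ and $w=\tfrac{q}{q+\lambda}\in[0,1)$, which yields
\begin{equation*}
e^{-\frac14(2q+\lambda)|z|^2}=\Big(\frac{\lambda}{q+\lambda}\Big)^{n}\sum_{k=0}^{\infty}\Big(\frac{q}{q+\lambda}\Big)^{k}\varphi_k^\lambda(z).
\end{equation*}
Substituting into the previous display and interchanging the sum with the $q$-integral (a routine application of Fubini's theorem, using the standard bound on Laguerre polynomials) produces
\begin{equation*}
\varphi_{s,\delta}^\lambda(z)=\frac{2\pi\lambda^{n}}{\Gamma(\nu)^2}\sum_{k=0}^{\infty}\varphi_k^\lambda(z)\int_0^\infty e^{-(2q+\lambda)\delta}q^{\nu-1+k}(q+\lambda)^{\nu-1-n-k}\,dq .
\end{equation*}
The change of variables $q=\lambda x$ turns the inner integral into $\lambda^{2\nu-1-n}L\big(\delta\lambda,\nu+k,\,n+1+k-\nu\big)$; since $2\nu-1-n=s$, $\nu+k=\tfrac{2k+n+1+s}{2}$ and $n+1+k-\nu=\tfrac{2k+n+1-s}{2}$, this equals $\lambda^{s}L\big(\delta\lambda,\tfrac{2k+n+1+s}{2},\tfrac{2k+n+1-s}{2}\big)$. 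Replacing $\lambda$ by $|\lambda|$ and comparing with the normalization $\varphi_{s,\delta}^\lambda(z)=(2\pi)^{-n}|\lambda|^n\sum_k c_{k,\delta}^\lambda(s)\varphi_k^\lambda(z)$—which is legitimate because the $\varphi_k^\lambda$ are mutually orthogonal, so the Laguerre expansion is unique—one reads off precisely $c_{k,\delta}^\lambda(s)=\dfrac{(2\pi)^{n+1}|\lambda|^{s}}{\Gamma(\frac{n+1+s}{2})^{2}}\,L\big(\delta|\lambda|,\tfrac{2k+n+1+s}{2},\tfrac{2k+n+1-s}{2}\big)$.

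The genuinely delicate step is the first one: the formal manipulation with the Dirac mass must be justified, either by inserting a regularizing factor $e^{-\epsilon(p+q)}$ and letting $\epsilon\to0^+$, or—more cleanly—by proving the displayed linear-exponent integral representation of $\varphi_{s,\delta}^\lambda$ independently, e.g. by matching it against the classical evaluation of $\int(A^2+a^2)^{-\nu}e^{i\lambda a}\,da$ in terms of $K_{\nu-1/2}$ via the substitution $q=\tfrac{\lambda}{2}(\cosh\theta-1)$. Everything after that representation is elementary. Throughout one keeps the standing hypothesis $0<s<\frac{n+1}{2}$ of \cite{CRT,RT}, under which all the integrals above (in particular $\varphi_{s,\delta}^\lambda$ and the auxiliary integrals $L(\delta|\lambda|,\cdot,\cdot)$) and the interchanges of summation and integration are unambiguously valid.
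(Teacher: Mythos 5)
Your computation is correct, and it checks out line by line: the Euler--Laplace representation of each factor $(\delta+\tfrac14|z|^2\pm ia)^{-\nu}$, the resulting single integral with linear exponent in $|z|^2$, the Laguerre generating function $\sum_k L_k^{n-1}(x)w^k=(1-w)^{-n}e^{-xw/(1-w)}$ with $w=q/(q+\lambda)$, the bookkeeping of powers ($2\nu-1-n=s$, $\nu+k=\tfrac{2k+n+1+s}{2}$, $n+1+k-\nu=\tfrac{2k+n+1-s}{2}$), and the final matching against the normalisation $\varphi_{s,\delta}^\lambda=(2\pi)^{-n}|\lambda|^n\sum_k c_{k,\delta}^\lambda(s)\varphi_k^\lambda$ all give exactly the stated constant $(2\pi)^{n+1}|\lambda|^s\Gamma(\tfrac{n+1+s}{2})^{-2}$. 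Note that the paper itself gives no proof of this proposition --- it is quoted from \cite{CRT} and \cite{RT} --- so your argument is a self-contained derivation rather than a reproduction. The two points you flag are indeed the only ones needing care, and both are fine: the Dirac-mass step is rigorous once rephrased as the convolution theorem, since $(\delta+\tfrac14|z|^2\pm ia)^{-\nu}$ is the Fourier transform of $\Gamma(\nu)^{-1}e^{-pA}p^{\nu-1}\mathbf{1}_{p>0}$ at $\pm a$, both factors lie in $L^1\cap L^2$, and the product $(A^2+a^2)^{-\nu}$ is in $L^1(da)$ because $2\nu=n+1+s>1$, so Fourier inversion applies pointwise and produces precisely your $q$-integral for $\lambda>0$ (evenness in $\lambda$ handles the rest); and the interchange of sum and integral is justified by the bound $|L_k^{n-1}(x)|e^{-x/2}\le\binom{k+n-1}{k}$, which makes the majorised series sum to $C((q+\lambda)/\lambda)^n$ and reduces the dominating integral to the convergent one you started from. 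The restriction $0<s<\tfrac{n+1}{2}$ is not actually used anywhere in your argument, which is consistent with it being a standing hypothesis inherited from the cited sources rather than a necessity for this identity.
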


 Thus we see that if $ U_\rho(x,u,\xi):= U(x,u,\xi,\rho) =  \rho^{2s} f \ast \Phi_{s,\rho}(x,u,\xi)  $ then 
 $$ 
 U_\rho^\lambda \ast_\lambda \varphi_k^\lambda(x,u)  = \frac{2^{-(n+1+s)} }{ \pi^{n+1}\Gamma(s)}  \Gamma\Big(\frac{n+1+s}{2}\Big)^2  \rho^{2s} c_{k,\rho^2}^{\lambda/4}(s) f^\lambda \ast_\lambda \varphi_k^\lambda(x,u).
 $$
 Hence the task boils down to studying the convergence of the integral (by considering $ (z,w,\zeta) = (iy,iv,\eta)$) 
 \begin{equation}
 \label{identityG} 
 \rho^{4s}\int_{-\infty}^\infty e^{2\lambda \eta} \Big( \sum_{k=0}^\infty \|f^\lambda \ast_\lambda \varphi_k^\lambda\|_2^2 \frac{k!(n-1)!}{(k+n-1)!} (c_{k,\rho^2}^{\lambda/4}(s))^2 \varphi_k^\lambda(2iy,2iv) \Big)\, d\mu(\lambda).
\end{equation}
 We denote  the integrand in \eqref{identityG} by  $ F_\lambda(y,v,\eta) .$  In order to study the convergence of the integral
 $  \int_{|h| < \rho} F_\lambda(h) dh $ where $ |h| $ stands for the homogeneous norm for $ h \in \He^n, $  we  need to investigate the growth of 
 \begin{equation*}    
 \int_{|(y,v,\eta)| < \rho} e^{2\lambda \eta} \varphi_k^\lambda(2iy,2iv) dy\,dv\,d\eta 
 \end{equation*}
and the decay of $ c_{k,\rho^2}^{\lambda/4}(s) $ as functions of $ k$ and $ \lambda.$  These estimates are done in the next subsection.

%%%%%%%%%%%%%%%%%%%%%%%%%%%%%%%%%%%%%%%%%%%%
\subsection{Estimates on Laguerre functions and  the coefficients $ c_{k,\delta}^\lambda(s)$} 
%%%%%%%%%%%%%%%%%%%%%%%%%%%%%%%%%%%%%%%%%%%%

Recall from Subsection~\ref{solvable} that the homogeneous norm of $ (x,u,\xi) \in \He^n $ is given by $ |(x,u,\xi)|^4 
= (|x|^2+|u|^2)^2+16\xi^2.$ The Haar measure on $ \He^n $ has a polar decomposition: there exists a measure $ \sigma_r $ on the Koranyi sphere $ K_r = \{ (y,v,\eta) \in  \He^n : |(y,v,\eta)| = r \} $ such that 
$$  
\int_{\He^n} f(z,a) dz\,da = \int_0^\infty \int_{K_r} f(z,a) d\sigma_r(z,a) \, r^{Q-1} dr.
$$  
 Moreover, $ \sigma_r = \delta_r \sigma_1$, where $\delta_r \varphi(z,\xi)=\varphi(rz,r^2\xi)$. If we let $ \mu_r $ stand for the surface measure on the sphere $ S_r = \{ (z,0)\in\He^n: |z| =r \} $ and $ \delta_t $ for the Dirac measure on $ \R $ supported at the point $ t$, then the measure $ \mu_{r,t} = \mu_r  \ast \delta_t $ is supported on the set $ S_{r,t} = \{ (z,t)\in \He^n: |z| =r \}.$ The measure $ \sigma_r $ can be expressed in terms of the measures $ \mu_{r,t} $ as follows  (see \cite{FH}):
 $$  
 \sigma_r =\frac{\Gamma\big(\frac{n+1}{2}\big)}{\sqrt{\pi}\Gamma\big(\frac{n}{2}\big)} \int_{-\pi /2}^{\pi/2} \mu_{r \sqrt{\cos \theta}, \frac{1}{4}r^2 \sin \theta} \, (\cos \theta)^{n-1} \,d \theta.
 $$
 Integrating the function in \eqref{identityG} using polar coordinates,  we are reduced to proving that 
 $$
 \int_{-\infty}^\infty  \Big( \sum_{k=0}^\infty \|f^\lambda \ast_\lambda \varphi_k^\lambda\|_2^2  (c_{k,\rho^2}^{\lambda/4}(s))^2 \psi_k^\lambda(r)  \Big) d\mu(\lambda) < \infty 
 $$
 where 
 \begin{equation}
 \label{psisp}
 \psi_k^\lambda(r) = \frac{k!(n-1)!}{(k+n-1)!} \int_{ |(y,v,\eta)| = r} e^{2\lambda \eta}  \varphi_k^\lambda(2iy,2iv) d\sigma_r(y,v,\eta).
 \end{equation}
 From the definition of $ \sigma_r $  it follows that 
$$  
\int_{K_r}  f(y,v,-\eta) d\sigma_r = \int_{K_r} f(y,v,\eta)\, d\sigma_r
$$ 
and consequently 
\begin{equation}
\label{simme}
\psi_k^\lambda(r) = \frac{1}{2} \frac{k!(n-1)!}{(k+n-1)!} \int_{ |(y,v,\eta)| = r} \cosh(2\lambda \eta)\,  \varphi_k^\lambda(2iy,2iv) \,d\sigma_r(y,v,\eta).
\end{equation}
In view of the expression for $ \sigma_r $ in terms of $ \mu_{r,t} $ we see that 
\begin{equation}
\label{eq:psi}
\psi_k^\lambda(r) =\frac{\Gamma\big(\frac{n+1}{2}\big)}{\sqrt{\pi}\Gamma\big(\frac{n}{2}\big)}  \frac{k!(n-1)!}{(k+n-1)!} \int_{-\pi /2}^{\pi/2} e^{\frac{1}{2}\lambda r^2 \sin \theta}  \varphi_k^\lambda(2ir \sqrt{\cos \theta}) (\cos \theta)^{n-1} d\theta 
\end{equation}
where we have written 
\begin{equation*}
%\label{varphiL}
\varphi_k^\lambda(2ir \sqrt{\cos \theta})
= L_k^{n-1}(-2|\lambda|r^2\cos \theta) e^{|\lambda| r^2 \cos \theta}.
\end{equation*}
By combining \eqref{simme} and \eqref{eq:psi}, we also have 
$$
\psi_k^\lambda(r) = \frac{1}{2}\frac{\Gamma\big(\frac{n+1}{2}\big)}{\sqrt{\pi}\Gamma\big(\frac{n}{2}\big)}  \frac{k!(n-1)!}{(k+n-1)!} \int_{-\pi /2}^{\pi/2} \cosh\Big(\frac{1}{2}\lambda r^2 \sin \theta\Big)  \varphi_k^\lambda(2ir \sqrt{\cos \theta}) (\cos \theta)^{n-1} \,d\theta .
$$
From this expression it follows that $ \psi_k^\lambda(r) $ is an increasing function of $ r.$ Indeed, as $ L_k^\alpha(-s) = \sum_{j=0}^k c_{k,j} s^k $ with non-negative coefficients, the integrand in the above expression for $ \psi_k^\lambda(r) $ is an increasing function of $r$, which proves that $ \psi_k^\lambda(r) $ itself is increasing in $ r$.

In order to estimate $ \psi_k^\lambda(r) $   we need certain asymptotic properties of the Laguerre  functions similar to the estimates obtained in the proof of \cite[Proposition 3.2]{TPac}. Such inequalities are based on the Perron's estimate \cite[Theorem 8.22.3]{Sz}, valid for large $ k$ when $ s< 0$ (the error has a uniform bound for $ s \leq -c$, $c>0$),
\begin{equation}
\label{perron}
L_k^{\alpha}(s)=\frac12\pi^{-1/2}e^{s/2}(-s)^{-\alpha/2-1/4}k^{\alpha/2-1/4}e^{2(-ks)^{1/2}}(1+\mathcal{O}(k^{-1/2})),
\end{equation}
which is valid for $s$ in the complex plane cut along the positive real axis.
The asymptotic properties of Laguerre polynomials in the complex domain lead us to the first estimate.

\begin{lem}
\label{lem:upperpsi} 
As  $(2k+n)|\lambda| \to \infty,$ we have the estimate 
 $$ 
|\psi_k^\lambda(r)| \leq C_n e^{\frac{1}{2} |\lambda| r^2} e^{ 2r \sqrt{(2k+n)|\lambda|}}  .$$
We also have the following lower bound: for any small $ \delta >0 $ and $ |\lambda|r^2 \geq 1,$
$$  
|\psi_k^\lambda(r)|  \geq C_n(\delta) e^{\frac{1}{2} \sin(\pi /4-\delta) |\lambda| r^2} e^{ 2r \cos(\pi /4+\delta)\sqrt{(2k+n)|\lambda|}} ((2k+n)|\lambda|)^{-(2n-1)/4}.
$$
\end{lem}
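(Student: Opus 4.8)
The plan is to substitute Perron's asymptotics \eqref{perron} into the integral representation \eqref{eq:psi} for $\psi_k^\lambda(r)$ and then estimate the resulting integral over $\theta$ by elementary means, in the spirit of the estimates used in the proof of \cite[Proposition 3.2]{TPac}. The structural fact that drives everything is a cancellation. In \eqref{eq:psi} the Laguerre factor enters as $\varphi_k^\lambda(2ir\sqrt{\cos\theta})=L_k^{n-1}(-2|\lambda|r^2\cos\theta)\,e^{|\lambda|r^2\cos\theta}$, i.e.\ with the Laguerre polynomial evaluated at the negative point $s=-2|\lambda|r^2\cos\theta$, which lies on the cut covered by \eqref{perron} with $\alpha=n-1$; and the factor $e^{s/2}=e^{-|\lambda|r^2\cos\theta}$ produced by Perron's formula cancels exactly the explicit Gaussian $e^{|\lambda|r^2\cos\theta}$. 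Hence, up to the factor $1+\mathcal{O}(k^{-1/2})$, the quantity $\varphi_k^\lambda(2ir\sqrt{\cos\theta})$ behaves like $\tfrac12\pi^{-1/2}(2|\lambda|r^2\cos\theta)^{-(n-1)/2-1/4}\,k^{(n-1)/2-1/4}\,e^{2r\sqrt{2k|\lambda|\cos\theta}}$, and both inequalities of the lemma are read off from this after inserting it into \eqref{eq:psi}.

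For the upper bound I would first use that $t\mapsto L_k^{n-1}(-t)$ is nondecreasing on $[0,\infty)$, all its Taylor coefficients being nonnegative, to replace $\cos\theta$ by $1$ inside $L_k^{n-1}$ while simultaneously bounding $e^{|\lambda|r^2\cos\theta}\le e^{|\lambda|r^2}$; this makes the cancellation $\theta$-independent, and one applies \eqref{perron} to $L_k^{n-1}(-2|\lambda|r^2)$. Bounding the hyperbolic cosine by $e^{\frac12|\lambda|r^2}$, the exponential by $e^{2r\sqrt{2k|\lambda|}}\le e^{2r\sqrt{(2k+n)|\lambda|}}$, and carrying out the convergent integral $\int_{-\pi/2}^{\pi/2}(\cos\theta)^{n-1}\,d\theta$, one is left with the polynomial prefactor $\frac{k!(n-1)!}{(k+n-1)!}\,k^{(n-1)/2-1/4}(|\lambda|r^2)^{-(n-1)/2-1/4}$; since $\frac{k!(n-1)!}{(k+n-1)!}$ is comparable to $(1+k)^{-(n-1)}$, this prefactor is comparable to $\big((1+k)|\lambda|r^2\big)^{-(n-1)/2-1/4}$ and so is $\le C_n$ in the range of $(k,\lambda,r)$ relevant to the application. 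In the complementary range, where the argument $2|\lambda|r^2$ is too small for \eqref{perron} to be uniform, one instead uses that $L_k^{n-1}(-t)$ is comparable to $L_k^{n-1}(0)=\binom{k+n-1}{k}$ there, together with the identity $\frac{k!(n-1)!}{(k+n-1)!}\binom{k+n-1}{k}=1$, to bound that part of $\psi_k^\lambda(r)$ by $C_n$.

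For the lower bound I would work with the symmetrized form obtained by combining \eqref{simme} and \eqref{eq:psi}, and retain only the contribution of a fixed window $\theta\in[\pi/4-\delta,\pi/4+\delta]$, on which $\sin\theta\ge\sin(\pi/4-\delta)>0$ and $\cos\theta\ge\cos(\pi/4+\delta)>0$. Under the hypothesis $|\lambda|r^2\ge1$ the argument $2|\lambda|r^2\cos\theta$ then stays in a compact subset of the positive half-line, so \eqref{perron} furnishes a matching lower bound, its error $1+\mathcal{O}(k^{-1/2})$ exceeding $\tfrac12$ once $(2k+n)|\lambda|$ is large. On the window the hyperbolic cosine is at least $\tfrac12 e^{\frac12\sin(\pi/4-\delta)|\lambda|r^2}$; bounding $e^{2r\sqrt{2k|\lambda|\cos\theta}}$ from below using $\cos\theta\ge\cos(\pi/4+\delta)$, the inequality $\sqrt{c}\ge c$ for $c\in(0,1)$, and the comparability of $\sqrt{2k}$ with $\sqrt{2k+n}$ for $k$ large (after shrinking $\delta$ a little if necessary) produces $e^{2r\cos(\pi/4+\delta)\sqrt{(2k+n)|\lambda|}}$. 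Collecting the polynomial factor $\frac{k!(n-1)!}{(k+n-1)!}\,k^{(n-1)/2-1/4}(|\lambda|r^2)^{-(n-1)/2-1/4}$ and using the identity $\frac{2n-1}{4}=\frac{n-1}{2}+\frac14$ then gives the claimed power $\big((2k+n)|\lambda|\big)^{-(2n-1)/4}$.

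The main obstacle is the non-uniformity of Perron's estimate \eqref{perron}: as noted after its statement, the error is uniformly controlled only when the argument remains in a compact subset of the cut plane bounded away from the origin, so the behaviour near the endpoints $\theta\to\pm\pi/2$ (argument tending to $0$) must be recovered by the elementary estimates on $L_k^{n-1}$ near the origin indicated above — it is precisely the reduction to a monotone, $\theta$-independent bound in the upper estimate and the restriction to a fixed window in the lower estimate that confine the argument of $L_k^{n-1}$ to a usable range. The rest is bookkeeping: carefully tracking the accumulated polynomial factors against $\big((2k+n)|\lambda|\big)^{-(2n-1)/4}$ and checking that the various $\cos$-versus-$\sqrt{\cos}$ and $2k$-versus-$2k+n$ losses can be absorbed into $C_n$ and the choice of $\delta$.
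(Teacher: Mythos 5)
Your treatment of the main regime is essentially the paper's: for $|\lambda|r^2$ bounded away from $0$ you use monotonicity of $t\mapsto L_k^{n-1}(-t)$ to freeze the argument at $-2|\lambda|r^2$, apply Perron, and absorb the polynomial prefactor; for the lower bound you localise to the window $[\pi/4-\delta,\pi/4+\delta]$ and invoke Perron's two-sided asymptotics there, exactly as in the paper (with the same mild looseness about small $k$ versus large $|\lambda|$ that the paper itself has).

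However, your handling of the complementary regime $2|\lambda|r^2\le\delta$ contains a genuine error. You claim that there $L_k^{n-1}(-t)$ is comparable to $L_k^{n-1}(0)=\binom{k+n-1}{k}$, so that this part of $\psi_k^\lambda(r)$ is bounded by $C_n$. This is false uniformly in $k$: from the series $L_k^{n-1}(-t)=\sum_{j}\binom{k+n-1}{k-j}t^j/j!$ one sees that the ratio $L_k^{n-1}(-t)/L_k^{n-1}(0)$ behaves like $\sum_j (kt)^j/(j!)^2\asymp e^{2\sqrt{kt}}$ (up to polynomial factors), which tends to infinity as $k\to\infty$ for any fixed $t\in(0,\delta]$. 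Correspondingly, $\frac{k!(n-1)!}{(k+n-1)!}\varphi_k^\lambda(2ir\sqrt{\cos\theta})$ genuinely grows like $e^{2r\sqrt{2k|\lambda|\cos\theta}}$ in this regime, so the contribution cannot be bounded by a constant; it must be bounded by (a constant times) $e^{2r\sqrt{(2k+n)|\lambda|}}$, which is what the lemma asserts. The paper closes this gap by writing $L_k^{n-1}$ via the integral representation \cite[(5.6.5)]{Sz} in terms of Hermite polynomials and using the complex-plane asymptotics \cite[(8.22.7)]{Sz} of $H_{2k}(is)$ for $|s|\le\delta$, which produces precisely the factor $e^{2r\sqrt{(2k+1/2)|\lambda|}}$ times a quantity controlled by $\Gamma(k+n)/\Gamma(k+1)$. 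Alternatively, the elementary series bound $\frac{k!(n-1)!}{(k+n-1)!}L_k^{n-1}(-t)\le\sum_j (kt)^j/(j!)^2\le\cosh(2\sqrt{kt})\le e^{2\sqrt{kt}}$ would also do. Either way, your step as written does not establish the upper bound in the small-argument regime and needs to be replaced.
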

\begin{proof} The upper bound follows from the asymptotic property of the Laguerre polynomials stated in \eqref{perron}. Indeed, by \eqref{perron}, making use of the fact that $ L_k^\alpha(-s)$, $s> 0 $ is an increasing function of $ s $ we get the estimate
 \begin{equation}
 \label{firste}
\varphi_k^\lambda(2ir \sqrt{\cos \theta})\le C_\delta (2|\lambda| r^2)^{-\frac{n-1}{2}-\frac14}k^{\frac{n-1}{2}-\frac14}e^{2r \sqrt{2k|\lambda|}}
\end{equation}
valid for $ |\lambda| r^2 \geq \delta >0$ and $\theta\in (-\pi/2,\pi/2)$. In order to estimate $\varphi_k^\lambda(2ir \sqrt{\cos \theta})$ for $ |\lambda| r^2 \leq \delta $ we make use of the formula (see \cite[(5.6.5)]{Sz})
$$ 
L_k^\alpha(s) = \frac{(-1)^k \pi^{-1/2}}{\Gamma(\alpha+1/2)} \frac{\Gamma(k+\alpha+1)}{\Gamma(2k+1)} \int_{-1}^1 (1-t^2)^{\alpha-1/2} H_{2k}(\sqrt{s}t) dt, \qquad \alpha>-1/2 
$$
which expresses Laguerre polynomials in terms of Hermite polynomials. The asymptotic properties of Hermite polynomials in the complex plane are given in \cite[(8.22.7)]{Sz}, from which we have
$$ 
\frac{\Gamma(k+1)}{\Gamma(2k+1)} e^{s^2/2}|H_{2k}(is)| \leq C_\delta e^{\sqrt{(4k+1)} s}, \qquad   |s| \leq \delta.
$$
This leads to the estimate 
\begin{equation}
\label{seconde}
 \varphi_k^\lambda(2ir \sqrt{\cos \theta})\le C_\delta \frac{\Gamma(k+n)}{\Gamma(k+1)} e^{2r\sqrt{(2k+1/2)|\lambda|} }\int_{-1}^1 (1-t^2)^{n-3/2} e^{|\lambda|r^2(1-t^2)} dt
 \end{equation}
valid for $ |\lambda| r^2 \leq \delta$ and $\theta\in (-\pi/2,\pi/2)$. Combining \eqref{firste} and \eqref{seconde} we obtain
\begin{equation*}
 \frac{k! (n-1)!}{(k+n-1)!} \varphi_k^\lambda(2ir \sqrt{\cos \theta})\le C e^{2r\sqrt{(2k+n)|\lambda|} }.
 \end{equation*}
Recalling the definition of $ \psi_k^\lambda(r) $ we get the stated upper bound.

On the other hand, for any $\delta>0$ small enough, we get  the following lower bound  using the asymptotic property stated in \eqref{perron}: under the assumption that $ |\lambda| r^2 \geq 1,$
\begin{align*}
\psi_k^{\lambda}(r)&\ge \frac{\Gamma\big(\frac{n+1}{2}\big)}{\sqrt{\pi}\Gamma\big(\frac{n}{2}\big)}  \frac{k!(n-1)!}{(k+n-1)!} \int_{\pi /4-\delta}^{\pi/4+\delta} e^{\frac{1}{2}\lambda r^2 \sin \theta}  \varphi_k^\lambda(2ir \sqrt{\cos \theta}) (\cos \theta)^{n-1} d\theta \\
&\ge C_n \frac{k!(n-1)!}{(k+n-1)!} e^{\frac{1}{2} \sin(\pi /4-\delta) |\lambda| r^2} \int_{\pi /4-\delta}^{\pi/4+\delta} \varphi_k^\lambda(2ir \sqrt{\cos \theta}) (\cos \theta)^{n-1} d\theta\\
&\ge C_n(\delta) e^{\frac{1}{2} \sin(\pi /4-\delta) |\lambda| r^2} e^{ 2r (\cos(\pi /4+\delta)\sqrt{(2k+n)|\lambda|}} \big((2k+n)|\lambda|\big)^{-(2n-1)/4}.
\end{align*}

The proof is complete.

\end{proof}

In the next lemma we obtain good estimates on the coefficients $ c_{k,\delta}^\lambda(s) $. 
\begin{lem}
\label{lem:estimatec}
For $0<s\le 1/2$, $\rho>0$ and $\lambda\in \R$, we have the following two estimates, as $(2k+n)|\lambda| \to \infty$, 
$$ 
  c_{k,\rho^2}^{\lambda /4}(s)
 \leq  C_{n,s} \rho^{-2s}e^{-\frac12 \rho \sqrt{(2k+n+1-s)|\lambda|}}, \qquad c_{k,\rho^2}^{\lambda /4}(s) \leq C_{n,s} \rho^{-2s} e^{-\frac14 |\lambda| \rho^2}.
 $$
 Combining the above two estimates, we have, for  any $ 0 < \gamma < 1/2,$ 
 $$ 
  (c_{k,\rho^2}^{\lambda /4}(s))^2
 \leq  C_{n,s} \rho^{-4s}e^{-2\gamma \rho \sqrt{(2k+n+1-s)|\lambda|}}  e^{-\frac12 (1-2\gamma) |\lambda| \rho^2}.
 $$
\end{lem}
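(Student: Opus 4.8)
The plan is to feed the exact formula for $ c_{k,\delta}^\lambda(s) $ from Proposition~\ref{propck} into two elementary estimates for the one–dimensional integral $ L $. Put $ a=\tfrac14\rho^2|\lambda| $ and $ c=\tfrac{2k+n+1-s}{2} $, so that the second argument of $ L $ in Proposition~\ref{propck} is $ c+s $, and $ c>0 $ since $ 2k+n+1\ge n+1>\tfrac12\ge s $. Using the factorization $ x^{(c+s)-1}(1+x)^{-c}=x^{s-1}\bigl(\tfrac{x}{1+x}\bigr)^{c} $ and pulling $ e^{-a} $ out of $ e^{-a(2x+1)} $ gives
\[
L(a,c+s,c)=e^{-a}\int_0^\infty e^{-2ax}\,x^{s-1}\Bigl(\tfrac{x}{1+x}\Bigr)^{c}\,dx ,
\]
while Proposition~\ref{propck} reads $ c_{k,\rho^2}^{\lambda/4}(s)=C_{n,s}\,(|\lambda|/4)^{s}\,L(a,c+s,c) $. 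Everything now reduces to bounding this last integral; in each bound the factor $ (|\lambda|/4)^{s} $ cancels against a power $ a^{-s} $, leaving only $ \rho^{-2s} $.

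For the second estimate, use $ \bigl(\tfrac{x}{1+x}\bigr)^{c}\le 1 $ (legitimate since $ c>0 $) and integrate to get $ L(a,c+s,c)\le e^{-a}\int_0^\infty e^{-2ax}x^{s-1}\,dx=\Gamma(s)(2a)^{-s}e^{-a} $; substituting $ a=\tfrac14\rho^2|\lambda| $ yields $ c_{k,\rho^2}^{\lambda/4}(s)\le C_{n,s}\rho^{-2s}e^{-\frac14|\lambda|\rho^2} $. For the first estimate one must read off decay in $ \sqrt{k|\lambda|} $. From $ 1-t\le e^{-t} $ with $ t=\tfrac1{1+x} $ one gets $ \bigl(\tfrac{x}{1+x}\bigr)^{c}\le e^{-c/(1+x)} $; splitting $ e^{-2ax}=e^{-ax}e^{-ax} $ and applying the arithmetic–geometric mean inequality,
\[
ax+\frac{c}{1+x}=a(1+x)+\frac{c}{1+x}-a\ \ge\ 2\sqrt{ac}-a ,
\]
so $ e^{-ax}\bigl(\tfrac{x}{1+x}\bigr)^{c}\le e^{a-2\sqrt{ac}} $, and the leftover factor $ e^{-ax}x^{s-1} $ integrates to $ \Gamma(s)a^{-s} $. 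Hence $ L(a,c+s,c)\le\Gamma(s)a^{-s}e^{-2\sqrt{ac}} $, and since $ 2\sqrt{ac}=\tfrac1{\sqrt2}\,\rho\sqrt{(2k+n+1-s)|\lambda|}\ge\tfrac12\rho\sqrt{(2k+n+1-s)|\lambda|} $ the cancellation of $ |\lambda| $–powers gives $ c_{k,\rho^2}^{\lambda/4}(s)\le C_{n,s}\rho^{-2s}e^{-\frac12\rho\sqrt{(2k+n+1-s)|\lambda|}} $. (We are in fact discarding a factor $ \sqrt2 $ in the exponent, so the inequality is comfortable and no sharper saddle–point analysis of $ L $ is required.)

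The combined estimate is pure interpolation: since $ c_{k,\rho^2}^{\lambda/4}(s)>0 $, write $ \bigl(c_{k,\rho^2}^{\lambda/4}(s)\bigr)^2=\bigl(c_{k,\rho^2}^{\lambda/4}(s)\bigr)^{4\gamma}\bigl(c_{k,\rho^2}^{\lambda/4}(s)\bigr)^{2-4\gamma} $ with $ 0<\gamma<1/2 $ (so $ 2-4\gamma>0 $), and bound the first factor by the first estimate and the second by the second estimate. The powers of $ \rho^{-2s} $ then recombine to $ \rho^{-4s} $ and the two exponentials multiply to $ \exp\bigl(-2\gamma\rho\sqrt{(2k+n+1-s)|\lambda|}-\tfrac12(1-2\gamma)|\lambda|\rho^2\bigr) $, which is exactly the claimed bound. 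I do not foresee a real obstacle here; the only points to watch are the index bookkeeping in the factorization of the integrand and the positivity $ c>0 $, which is what validates the two pointwise bounds on $ (x/(1+x))^{c} $. Since all three inequalities hold for every $ k $ and every $ \lambda\neq0 $, the stated regime $ (2k+n)|\lambda|\to\infty $ is merely where they are subsequently used.
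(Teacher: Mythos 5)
Your proof is correct, and for the first (and harder) estimate it takes a genuinely different and more elementary route than the paper. The paper rewrites $L(a,b,c)$ in terms of the confluent hypergeometric function $U$, applies the Kummer transformation $U(a,b,z)=z^{1-b}U(a-b+1,2-b,z)$, and then invokes Temme's uniform asymptotic expansion of $U$ in terms of the Macdonald function $K_s$, which requires tracking the auxiliary variables $w=\arccosh(1+x/(2a))$ and $(w+\sinh w)/2$, verifying $2\gamma a\ge\sqrt{ax}$, and using asymptotics of ratios of Gamma functions. Your factorization $x^{b-1}(1+x)^{-c}=x^{s-1}(x/(1+x))^{c}$ combined with $(x/(1+x))^{c}\le e^{-c/(1+x)}$ and the AM--GM bound $a(1+x)+c/(1+x)\ge 2\sqrt{ac}$ replaces all of that special-function machinery with a pointwise supremum estimate; it is non-asymptotic (valid for every $k$ and $\lambda\ne 0$, not just as $(2k+n)|\lambda|\to\infty$) and even yields the better exponent $\tfrac{1}{\sqrt2}\rho\sqrt{(2k+n+1-s)|\lambda|}$ in place of $\tfrac12\rho\sqrt{(2k+n+1-s)|\lambda|}$. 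What the paper's heavier approach buys is precision: the uniform asymptotics also produce matching lower bounds (recorded as asides in the paper's proof), which your one-sided argument cannot give, but the lemma as stated needs only upper bounds. Your second estimate is essentially identical to the paper's (both reduce to $t^{b-1}(1+t)^{-c}\le t^{s-1}$), and your interpolation $(c_{k,\rho^2}^{\lambda/4}(s))^{2}=(c_{k,\rho^2}^{\lambda/4}(s))^{4\gamma}(c_{k,\rho^2}^{\lambda/4}(s))^{2-4\gamma}$ correctly delivers the combined bound for every $0<\gamma<1/2$, whereas the paper's displayed combination actually only writes out the case $\gamma=1/4$.
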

\begin{proof}
We write $ L(a,b,c) $ in terms of the confluent hypergeometric function of second type $ U(a,b,c) $ as follows (see \cite[p. 19]{CRT})
$$ 
L(a,b,c) =e^{-a} \Gamma(b) U(b,b-c+1,2a).
$$
Let us call $ \alpha := (n+1+s)/2$, $ \beta := (n+1-s)/2$.
Take $ a = \delta|\lambda|$, $b = k+\alpha$, $c = k+\beta $, then we have
\begin{equation}
\label{LU}
L(\delta|\lambda|,k+\alpha,k+\beta) =  e^{-\delta |\lambda|} \Gamma(k+\alpha) U(k+\alpha, \alpha-\beta+1,2\delta|\lambda|).
\end{equation}
On the other hand, the following asymptotic properties of $ U(a,b,z) $ are known (see \cite[13.8.8]{OlMax}, also \cite{T}): when $a\to \infty$ and $b\le 1$ fixed,
\begin{equation}
\label{uas}
U(a,b,x) = \frac{2e^{x/2}}{\Gamma(a)}\bigg(\sqrt{\frac{2}{\gamma}\tanh\Big(\frac{w}{2}\Big)}\Big(\frac{1-e^{-w}}{\gamma}\Big)^{-b}\gamma^{1-b}K_{1-b}(2\gamma a)+a^{-1}\Big(\frac{a^{-1}+\gamma}{1+\gamma}\Big)^{1-b}e^{-2\gamma a}\mathcal{O}(1)\bigg),
\end{equation} 
where $w=\arccosh(1+(2a)^{-1}x)$, and  $\gamma=(w+\sinh w)/2$. Since in \eqref{uas} we have $b= \alpha-\beta+1 = s+1>1$, we use the transformation (see \cite[13.2.40]{OlMax})
$$
U(a,b,z)=z^{1-b}U(a-b+1,2-b,z).
$$ 
Then,
\begin{equation}
\label{us}
 U(k+\alpha, \alpha-\beta+1,2\delta|\lambda|)=(2\delta|\lambda|)^{-s}U(k+\beta,1-s,2\delta|\lambda|).
\end{equation}
So, as $k\to\infty$, in view of \eqref{LU}, \eqref{us} and \eqref{uas}
\begin{multline*}
L(\delta|\lambda|,k+\alpha,k+\beta)=e^{-\delta|\lambda|}\Gamma(k+\alpha)(2\delta|\lambda|)^{-s} \frac{2e^{\delta|\lambda|}}{\Gamma(k+\beta)}\\\times\bigg(\sqrt{\frac{2}{\gamma}\tanh\Big(\frac{w}{2}\Big)}\Big(\frac{1-e^{-w}}{\gamma}\Big)^{s-1}\gamma^{s}K_{s}(2\gamma (k+\beta))
+\frac{1}{k+\beta}\Big(\frac{(k+\beta)^{-1}+\gamma}{1+\gamma}\Big)^{s}e^{-2\gamma (k+\beta)}\mathcal{O}(1)\bigg),
\end{multline*}
where we chose, in \eqref{uas}, $a:=k+\beta$, $b:=1-s$ and $x:=\delta|\lambda|$, so that we have 
$
\cosh w=1+\frac{x}{2a}=1+\frac{\delta|\lambda|}{k+\beta}$, 
and 
$$
2\gamma=w+\sinh w=\arccosh\Big(1+\frac{\delta|\lambda|}{k+\beta}\Big)+\sinh\Big(\arccosh\Big(1+\frac{\delta|\lambda|}{k+\beta}\Big)\Big).
$$
We will first show that $ \gamma a \geq c \sqrt{ax} $ for some $ c> 0$  as $ a $ tends to infinity, or equivalently that $ \gamma(k+\beta) \geq c \sqrt{(k+\beta)\delta|\lambda|} $ as $k$ tends to infinity. It is enough to prove $ \frac{a \gamma^2}{x}  \geq c^2$. Consider 
\begin{equation}
\label{comput}
\frac{a \gamma^2}{x} = \frac{1}{x} \frac{x}{2(\cosh w-1)} \frac{(w+\sinh w)^2}{4} = \frac18 \frac{(w+\sinh w)^2}{(\cosh w-1)}.
\end{equation}
Then
$$
\frac18 \frac{(w+\sinh w)^2}{(\cosh w-1)}\ge \frac18 \frac{\sinh^2w}{\cosh w-1}= \frac18 \frac{\sinh w}{\tanh \frac{w}{2}}=\frac14 \cosh^2\frac{w}{2}\ge \frac14.
$$
Thus our claim is proved with $ c = \frac12$, i.e., we have proved that
\begin{equation}
\label{eq:estimateag}
\gamma a \geq \frac12 \sqrt{ax}.
\end{equation}
Observe that we could do even a bit better. Indeed, observe that, when $k\to \infty$ and $\delta$ and $|\lambda|$ are fixed, we have that $w\to 0$, so that $\gamma \sim w \sim \sinh{w}$. Then, in \eqref{comput} we can write
$$
\frac18 \frac{(w+\sinh w)^2}{(\cosh w-1)}\sim \frac12 \frac{(\sinh w)^2}{(\cosh w-1)}= \frac12 \frac{(\sinh w)^2}{\tanh \frac{w}{2}}=\cosh^2\frac{w}{2}\sim 1.
$$

Now we will estimate the terms in front of $ K_{1-b}(2\gamma a) $ and $ e^{-2\gamma a} $ (i.e. $ K_{s} $ and $ e^{-2\gamma (k+\beta)}$). On one hand, in view of \eqref{eq:estimateag},
$$
\frac{a^{-1}+\gamma}{1+\gamma}=\frac{(\gamma a)^{-1}+1}{\gamma^{-1}+1}\le (\gamma a)^{-1}+1\le 2(\sqrt{ax})^{-1}+1.
$$
Therefore, $a^{-1}\big(\frac{a^{-1}+\gamma}{1+\gamma}\big)^{s}\le a^{-1}(2(\sqrt{ax})^{-1}+1)^s$.
So, as $a x\to +\infty$, 
\begin{equation}
\label{primer}
a^{-1}\Big(\frac{a^{-1}+\gamma}{1+\gamma}\Big)^{s}e^{-2\gamma a}\le a^{-1}e^{-2\gamma a}\le a^{-1}e^{-\sqrt{ax} }.
\end{equation}
Again we could also get a lower bound with a pay of $a^{-s}$. Indeed, 
$$
\frac{a^{-1}+\gamma}{1+\gamma}\ge a^{-1}\frac{1}{1+\gamma}\ge c a^{-1},
$$
since $\gamma\to 0$ as $a\to \infty$. Thus, altogether (observe the difference with the upper bound \eqref{primer}),
$$
a^{-1}\Big(\frac{a^{-1}+\gamma}{1+\gamma}\Big)^{s}e^{-2\gamma a}\ge a^{-1}a^{-s}e^{-2\gamma a}\sim a^{-1-s}e^{-2\sqrt{ax} }.
$$
On the other hand, it remains to take care of the term in front of $ K_{1-b}(2 \gamma a)$:
$$
\sqrt{\frac{2}{\gamma}\tanh\Big(\frac{w}{2}\Big)}\Big(\frac{1-e^{-w}}{\gamma}\Big)^{s-1}\gamma^{s}.
$$ 
By the Taylor expansions of $\sinh w$ and $e^{-w}$, we have that $w\sim \sinh w\sim 1- e^{-w}$. Hence,  $\gamma=w+\sinh w\sim w$ as $a\to +\infty$. Moreover,
$$
\tanh\Big(\frac{w}{2}\Big)=\frac{1-e^{-w}}{1+e^{-w}}\le 1-e^{-w}\sim w. 
$$
From here, we conclude that 
\begin{equation}
\label{segun}
\sqrt{\frac{2}{\gamma}\tanh\Big(\frac{w}{2}\Big)}\Big(\frac{1-e^{-w}}{\gamma}\Big)^{s-1}\sim C, \quad \text{as } a\to +\infty.
\end{equation}
Finally, since  $K_{1-b}(2\gamma a)\sim e^{-2\gamma a}(2\gamma a)^{-1/2}$ as $a\to +\infty$, we have 
\begin{equation}
\label{tercer}
\gamma^se^{-2\gamma a}(2\gamma a)^{-1/2}= 2^{-s}\frac{(2\gamma a)^s}{a^s}e^{-2\gamma a}(2\gamma a)^{-1/2}\le (\sqrt{ax})^{s-1/2}a^{-s}e^{-2\gamma a}\le a^{-s}e^{-\sqrt{ax} }
\end{equation}
where we have used the fact that $ 2 \gamma a \ge \sqrt{ax}$, $ax \to \infty$ as $ (2k+n)|\lambda| \to \infty $ and $s\le 1/2$. Observe that here we are getting an upper bound. 
So, in view of \eqref{primer}, \eqref{segun} and \eqref{tercer}, we get
$$
L(\delta|\lambda|,k+\alpha,k+\beta)\le C\frac{\Gamma(k+\alpha)}{\Gamma(k+\beta)}(2\delta|\lambda|)^{-s}\Big[\frac{e^{- \sqrt{(2k+n+1-s)\delta|\lambda|} }}{(k+\beta)^s}+\frac{e^{- \sqrt{(2k+n+1-s)\delta|\lambda|} }}{k+\beta}\Big],
$$
thus, as $k\to +\infty$, taking into account the asymptotics for the quotient of Gamma functions $\frac{\Gamma(k+x)}{\Gamma(k+y)}\sim k^{x-y}$, we conclude
$$
L(\delta|\lambda|,k+\alpha,k+\beta)\le C(k+\beta)^s(2\delta|\lambda|)^{-s}\frac{1}{(k+\beta)^s}e^{- \sqrt{(2k+n+1-s)\delta|\lambda|} }=C (\delta|\lambda|)^{-s}e^{- \sqrt{(2k+n+1-s)\delta|\lambda|} }.
$$
From here and in view of Proposition \ref{propck} we obtain
\begin{equation}
\label{eq:estimateck1}
c_{k,\rho^2}^{\lambda/4}(s)\le C_{n,s}\rho^{-2s} e^{-\frac12\rho \sqrt{(2k+n+1-s)|\lambda|} }.
\end{equation}
We also have the estimate
\begin{align*} 
L(\delta|\lambda|, k+\alpha,k+\beta) &= e^{-\delta |\lambda|} \int_0^\infty  e^{-2\delta |\lambda|t} t^{k+\alpha-1}(1+t)^{-k-\beta} dt\\
& \leq e^{-\delta |\lambda|} \int_0^\infty  e^{-2\delta |\lambda|t} t^{s-1} dt
 = \Gamma(s) (2\delta|\lambda|)^{-s} e^{-\delta |\lambda|}.
\end{align*}
This gives another estimate
\begin{equation}
\label{eq:estimateck2}
c_{k,\rho^2}^{\lambda /4}(s) \leq C  \rho^{-2s} e^{-\frac{1}{4}|\lambda|\rho^2}.
\end{equation}
Combining the two estimates \eqref{eq:estimateck1} and \eqref{eq:estimateck2} for $ c_{k,\rho^2}^{\lambda/4} $ we get
$$  
(c_{k,\rho^2}^{\lambda /4}(s))^2 \leq C   \rho^{-4s}  e^{-\frac12\rho \sqrt{(2k+n+1-s)|\lambda|}} e^{-\frac{1}{4}|\lambda|\rho^2} .$$
This completes the proof of the lemma.
\end{proof} 

%%%%%%%%%%%%%%%%%%%%%%%%
\subsection{Certain $ L^2 $ spaces of holomorphic functions}
\label{certainL2}
%%%%%%%%%%%%%%%%%%%%%%%%

We now consider a domain in $ \C^n \times \C^n \times \C $ with the key property that it is invariant under the action of the Heisenberg motion group $ G_n.$  For each $ r > 0 $, recall the definition in \eqref{tube}
$$ 
\Omega_r = \Big\{  (z,w,\zeta) = (x+iy,u+iv,\xi+i\eta):  (|y|^2+|v|^2)^2+16 \big( \eta-\frac{1}{2}(u\cdot y-v\cdot x)\big)^2 < r^4 \Big\}.
$$ 
Let us first check that $ \Omega_r $ is indeed invariant under the action of $ G_n$. When $ g = (x',u',\xi') \in \He^n $ we have 
$$ 
g\cdot(z,w,\zeta) =(x',u',\xi')(z,w,\zeta) = \big(x'+x+iy, u'+u+iv, \xi'+\xi+i\eta+ \frac{1}{2}(u'\cdot z-x'\cdot w)\big).
$$ 
Therefore,
$$ 
(|y|^2+|v|^2)^2 + 16 \big( \eta+ \frac{1}{2}(u'\cdot y-x'\cdot v) -\frac{1}{2}(u+u')\cdot y- (x+x')\cdot v\big)^2 = \big (|y|^2+|v|^2)^2+16 ( \eta-\frac{1}{2}(u\cdot y-v\cdot x)\big)^2
$$
which proves that $ g\cdot(z,w,\zeta) \in \Omega_r $ whenever $ (z,w,\zeta) \in \Omega_r.$ We need to check the same for $ g =(0,\sigma)\in G_n$, $\sigma \in U(n)$. The action of $ \sigma = a+ib $  on  $ \C^n \times \C^n \times \C $ is given by $ \sigma\cdot(z,w,\zeta) = (az-bw,aw+bz, \zeta)$. In real coordinates, we have
$$ 
\sigma\cdot (x+iy,u+iv,\xi+i\eta) = \big((ax-bu)+i(ay-bv),(au+bx)+i(av+by),\xi+i\eta\big).
$$ 
Thus we need to check that
$$ ( |(ay-bv)|^2+|(av+by)|^2)^2+16 \Big( \eta -\frac{1}{2} \big((au+bx)\cdot (ay-bv)- (ax-bu)\cdot (av+by)\big)\Big)^2$$
is the same as $  (|y|^2+|v|^2)^2+16 \big( \eta-\frac{1}{2}(u\cdot y-v\cdot x)\big)^2.$ As 
$$ 
(ay-bv)+i(av+by) = (a+ib)(y+iv) =\sigma\cdot (y+iv) 
$$ 
with $ \sigma \in U(n) $ it is clear that 
$|(ay-bv)|^2+|(av+by)|^2 = |y|^2+|v|^2.$ If we let $ [(x,u), (y,v)] = (u\cdot y- v\cdot x) $ stand for the symplectic form on $ \R^{2n} $ (see Subsection \ref{solvable}), it can be checked that $ [\sigma\cdot (x,u), \sigma\cdot (y,v)] = [(x,u), (y,v)]$ and consequently
$$ 
(au+bx)\cdot (ay-bv)- (ax-bu)\cdot(av+by) = [\sigma\cdot(x,u), \sigma \cdot(y,v)] = [(x,u), (y,v)] .
$$

\begin{defn}
 Consider $ \mathcal{O}(\C^{2n+1}),$  the space of all holomorphic functions on $\C^{2n+1}$
and equip it with the $ L^2 $ norm
\begin{equation}
\label{norm}
\| F\|^2 = \int_{\Omega_{\rho}}\big|F(z,w,\zeta)\big|^2\,dz\,dw\,d\zeta.
\end{equation}
We will denote by $\widetilde{\mathcal{H}}(\Omega_{\rho})$ the completion of $ \mathcal{O}(\C^{2n+1})$ with respect to the norm \eqref{norm}. 
\end{defn}
 It is easy to see that
$
\widetilde{\mathcal{H}}(\Omega_{\rho})\subset \mathcal{O}(\Omega_{\rho})\cap L^2(\Omega_{\rho}).
$
 Indeed, if  $ F \in \widetilde{\mathcal{H}}(\Omega_{\rho}),$ then there exists a sequence $\{F_j\} $
 with $F_j\in \mathcal{O}(\C^{2n+1})$ such that 
 $$
 \int_{\Omega_{\rho}}|F_j(z,w,\zeta)-F(z,w,\zeta)|^2\,dz\,dw\,d\zeta \to 0\quad \text{ as } j\to \infty. 
 $$
 By the mean value property of holomorphic functions
 $$
 F_j(z,w,\zeta)=\frac{1}{|D_{\delta}(z,w,\zeta)|}\int_{D_{\delta}(z,w,\zeta)} F_j(z',w',\zeta')\,dz'\,dw'\, d\zeta',
 $$
 where $D_{\delta}(z,w,\zeta):=B_{\delta}(z)\times B_{\delta}(w)\times B_{\delta}(\zeta)$ is a polydisc contained in $ \Omega_\rho$, and we can check that, for $K\subset \Omega_{\rho}$ compact,  
 \begin{align*}
 \sup_{K\subset \Omega_\rho} | F_j(z,w,\zeta)| &= \sup_{ (z,w,\zeta) \in K } \frac{1}{|D_{\delta}(z,w,\zeta)|}\Big|\int_{D_{\delta}(z,w,\zeta)} F_j(z',w',\zeta')\,dz'\,dw'\,d\zeta'\Big|\\
 &\le C_\delta \Big(\int_{\Omega_{\rho}}| F_j(z',w',\zeta')|^2\,dz'\,dw'\,\,d\zeta' \Big)^{1/2}.
 \end{align*} 
Hence $F_j$ is uniformly Cauchy and converges to $F $ over compact subsets. This shows that $F $ is holomorphic.
Let us introduce one more space $ \mathcal{H}_\rho(\He^n)$,  a subspace of $ L^2(\He^n).$    We let
\begin{equation}
\label{Psiint} 
\Psi_k^\lambda(\rho) = c_n \int_0^{\rho} \psi_k^\lambda(r) r^{Q-1} \,dr,
\end{equation}
where  $\psi_k^{\lambda}(r)$ is  the function already defined in \eqref{psisp}. In other words,
\begin{equation}
\label{Psi}  
\Psi_k^\lambda(\rho) =  \frac{k!(n-1)!}{(k+n-1)!} \int_{ |(y,v,\eta)| \leq \rho} e^{2\lambda \eta}  \varphi_k^\lambda(2iy,2iv) dy\,dv\,d\eta.
\end{equation} 

\begin{defn}
\label{defHrho}
We say that $ f \in   \mathcal{H}_\rho(\He^n)$ if 
$$
\int_{-\infty}^\infty  \Big( \sum_{k=0}^\infty \|f^\lambda \ast_\lambda \varphi_k^\lambda\|_2^2 \, \Psi_k^\lambda(\rho) \Big)\, d\mu(\lambda) < \infty
$$
where $ \Psi_k^\lambda$ is the one in \eqref{Psi}.
\end{defn}

\begin{thm}
\label{thm:general}
Let $ \rho > 0.$ A function $ F $  from $  L^2(\Omega_\rho) $ belongs to  $\widetilde{\mathcal{H}}(\Omega_{\rho})$ if and only if its restriction $ f $ belongs to $  \mathcal{H}_\rho(\He^n). $ Moreover, 
$$  
\int_{\Omega_{\rho}} | F(z,w,\zeta)|^2 \,dz\,dw\,d\zeta  = C_{n}  \int_{-\infty}^\infty  \Big( \sum_{k=0}^\infty \|f^\lambda \ast_\lambda \varphi_k^\lambda\|_2^2 \,\, \Psi_k^\lambda(\rho) \Big) d\mu(\lambda).
$$
\end{thm}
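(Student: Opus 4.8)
The plan is to prove the identity first for $F$ holomorphic on all of $\C^{2n+1}$, using Gutzmer's formula, and then to transfer it to the completion $\widetilde{\mathcal H}(\Omega_\rho)$ by a soft Hilbert--space argument together with the identity theorem for holomorphic functions.

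\emph{Reduction to a $G_n$-orbit integral.} Writing a point of $\C^{2n+1}$ as $(z,w,\zeta)=(x+iy,u+iv,\xi+i\eta)$ and setting $\eta_0:=\eta-\tfrac12(u\cdot y-v\cdot x)$, the map $((x,u,\xi),(y,v,\eta_0))\mapsto (x,u,\xi)\cdot(iy,iv,i\eta_0)$ is a bijection of $\He^n\times\R^{2n+1}$ onto $\C^{2n+1}$ whose Jacobian is $1$ (it only shears the central variable), and --- these being precisely the computations carried out before the statement to show that $\Omega_\rho$ is $G_n$-invariant --- it carries $\He^n\times B_\rho$ onto $\Omega_\rho$, where $B_\rho:=\{(y,v,\eta_0):(|y|^2+|v|^2)^2+16\eta_0^2<\rho^4\}$. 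Hence, for $F$ entire, $\int_{\Omega_\rho}|F|^2\,dz\,dw\,d\zeta=\int_{B_\rho}(\int_{\He^n}|F(h\cdot(iy,iv,i\eta_0))|^2\,dh)\,dy\,dv\,d\eta_0$. Since $\Omega_\rho$ and Lebesgue measure are also $U(n)$-invariant, $\|F\|_{L^2(\Omega_\rho)}^2=\int_{U(n)}\int_{\Omega_\rho}|F(\sigma\cdot p)|^2\,dp\,d\sigma$; running the same change of variables inside, and absorbing each $\sigma\in U(n)$ --- a group automorphism of $\He^n$ fixing the centre --- by the measure-preserving substitution $h\mapsto\sigma^{-1}h$, the $\He^n$-integral together with the $\sigma$-average becomes exactly $\int_{G_n}|F(g\cdot(iy,iv,i\eta_0))|^2\,dg$, so that
$$\int_{\Omega_\rho}|F|^2\,dz\,dw\,d\zeta=\int_{B_\rho}\Big(\int_{G_n}|F(g\cdot(iy,iv,i\eta_0))|^2\,dg\Big)\,dy\,dv\,d\eta_0.$$

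\emph{Gutzmer and identification of $\Psi_k^\lambda$.} At the base point $(iy,iv,i\eta_0)$ the real coordinates $x,u,\xi$ vanish, so the factor $e^{\lambda(u\cdot y-v\cdot x)}$ in Theorem~\ref{thm:gutzmer} equals $1$ and the inner integral equals $c_n\int_{-\infty}^\infty e^{2\lambda\eta_0}(\sum_{k=0}^\infty\|f^\lambda\ast_\lambda\varphi_k^\lambda\|_2^2\tfrac{k!(n-1)!}{(k+n-1)!}\varphi_k^\lambda(2iy,2iv))\,d\mu(\lambda)$, where $f:=F|_{\He^n}$. Every factor here is nonnegative --- in particular $\varphi_k^\lambda(2iy,2iv)=L_k^{n-1}(-2|\lambda|(|y|^2+|v|^2))e^{|\lambda|(|y|^2+|v|^2)}\ge0$ since Laguerre polynomials have nonnegative coefficients --- so Tonelli lets me interchange the $B_\rho$-integral with the $\lambda$-integral and the $k$-sum; by \eqref{Psi} the resulting inner integral is $\int_{B_\rho}e^{2\lambda\eta_0}\varphi_k^\lambda(2iy,2iv)\,dy\,dv\,d\eta_0=\tfrac{(k+n-1)!}{k!(n-1)!}\Psi_k^\lambda(\rho)$, which yields
$$\int_{\Omega_\rho}|F|^2\,dz\,dw\,d\zeta=C_n\int_{-\infty}^\infty\Big(\sum_{k=0}^\infty\|f^\lambda\ast_\lambda\varphi_k^\lambda\|_2^2\,\Psi_k^\lambda(\rho)\Big)\,d\mu(\lambda),$$
with $C_n=c_n$, for every $F\in\mathcal O(\C^{2n+1})$, both sides being simultaneously finite or infinite.

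\emph{Passing to the completion.} Pairing $\eta_0$ with $-\eta_0$ in \eqref{Psi} and using $L_k^{n-1}(0)=\tfrac{(k+n-1)!}{k!(n-1)!}$ gives $\Psi_k^\lambda(\rho)\ge|B_\rho|>0$ uniformly in $k$ and $\lambda$, so $\mathcal H_\rho(\He^n)\hookrightarrow L^2(\He^n)$ continuously and is a Hilbert space. By the inclusion $\widetilde{\mathcal H}(\Omega_\rho)\subset\mathcal O(\Omega_\rho)\cap L^2(\Omega_\rho)$ proved above, restriction to the maximal totally real submanifold $\He^n=\R^{2n+1}\subset\Omega_\rho$ gives a linear map $T\colon\widetilde{\mathcal H}(\Omega_\rho)\to\mathcal H_\rho(\He^n)$; applying the identity of the previous step to the differences of an approximating sequence of entire functions, and using the local uniform convergence from the mean value property to identify the limit of the restrictions, one sees that $T$ is $\sqrt{C_n}$ times an isometry onto a closed subspace --- this gives the norm identity and the ``only if'' part. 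For surjectivity it suffices to show the range is dense. Given a compact $I\subset\R\setminus\{0\}$, an integer $N\ge0$, and smooth $\lambda$-families $h_k(\lambda;\cdot)$ of finite linear combinations of special Hermite functions lying in the range of $\ast_\lambda\varphi_k^\lambda$, the function $F(z,w,\zeta)=\int_I e^{-i\lambda\zeta}\sum_{k\le N}h_k(\lambda;z,w)\,d\mu(\lambda)$ is entire (each special Hermite function is a polynomial times the Gaussian $e^{-\frac14|\lambda|(z^2+w^2)}$, hence entire, see \cite{STH}), lies in $\widetilde{\mathcal H}(\Omega_\rho)$ by the identity above (each $\Psi_k^\lambda(\rho)$ is finite and continuous, so bounded on $I$ for $k\le N$), and $TF$ is the Heisenberg function whose only nonzero spectral components are $(TF)^\lambda\ast_\lambda\varphi_k^\lambda=\mathbf 1_I(\lambda)h_k(\lambda;\cdot)$, $k\le N$. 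Since these range over a dense subset of the $k$-th eigenspace of $\ast_\lambda\varphi_k^\lambda$ for a.e.\ $\lambda$, any $g\in\mathcal H_\rho(\He^n)$ orthogonal in $\mathcal H_\rho$ to all such $TF$ must satisfy $g^\lambda\ast_\lambda\varphi_k^\lambda=0$ for a.e.\ $\lambda$ and all $k$, hence $g=0$ by \eqref{decol}. Thus $T$ is onto, and if $F$ is holomorphic on $\Omega_\rho$ with $F|_{\He^n}\in\mathcal H_\rho(\He^n)$, the preimage $\widetilde F\in\widetilde{\mathcal H}(\Omega_\rho)$ of $F|_{\He^n}$ agrees with $F$ on $\He^n$, hence on all of the connected domain $\Omega_\rho$, so $F=\widetilde F\in\widetilde{\mathcal H}(\Omega_\rho)$.

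The main obstacle is concentrated in the first two steps: implementing the $U(n)$-average so that the $\He^n$-integral is promoted to a full $G_n$-integral amenable to Gutzmer's formula, and then, after the Tonelli interchange, matching the integral of the Gutzmer integrand over the Koranyi ball $B_\rho$ with $\Psi_k^\lambda(\rho)$ via the polar decomposition of the Haar measure of $\He^n$ --- the only genuinely delicate point being the bookkeeping of all normalizing constants. The remaining functional-analytic argument is routine, given the standard facts (see \cite{STH}) that special Hermite functions are entire and span the twisted-convolution eigenspaces densely.
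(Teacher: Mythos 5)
Your proposal is correct. The first half --- the change of variables $\eta\mapsto\eta_0=\eta-\tfrac12(u\cdot y-v\cdot x)$ fibering $\Omega_\rho$ over $\He^n\times B_\rho$, the $U(n)$-average promoting the $\He^n$-integral to a $G_n$-orbital integral, Gutzmer's formula at the purely imaginary base point, and the Tonelli interchange identifying $\int_{B_\rho}e^{2\lambda\eta_0}\varphi_k^\lambda(2iy,2iv)$ with $\tfrac{(k+n-1)!}{k!(n-1)!}\Psi_k^\lambda(\rho)$ --- is exactly the paper's computation, and your lower bound $\Psi_k^\lambda(\rho)\ge|B_\rho|$ (via $\cosh\ge1$ and $L_k^{n-1}(0)=\binom{k+n-1}{k}$) correctly justifies completeness of $\mathcal H_\rho(\He^n)$, which the paper uses tacitly. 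Where you genuinely diverge is the converse. The paper takes a given $f\in\mathcal H_\rho(\He^n)$, forms the entire extensions $F_{t}$ of $e^{-t\Delta}f$, and uses the weighted identity with the extra factor $e^{-2t(\lambda^2+(2k+n)|\lambda|)}$ plus dominated convergence to show $F_{t_j}$ is Cauchy in $L^2(\Omega_\rho)$ as $t_j\to0$; the limit is the required element of $\widetilde{\mathcal H}(\Omega_\rho)$. You instead run a closed-range-plus-density argument: the restriction map is a multiple of an isometry with closed range, and wave packets $\int_I e^{-i\lambda\zeta}\sum_{k\le N}h_k(\lambda;z,w)\,d\mu(\lambda)$ built from special Hermite functions exhaust a dense subspace of $\mathcal H_\rho(\He^n)$, so the range is everything. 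The heat-semigroup route is more constructive (it hands you the extension as a concrete limit of explicit entire functions), while your route is softer but has the merit of making explicit a point the paper glosses over: that an arbitrary holomorphic $F\in L^2(\Omega_\rho)$ whose restriction lies in $\mathcal H_\rho(\He^n)$ must itself coincide with the constructed extension, which you settle by the identity theorem on the maximal totally real submanifold $\He^n\subset\Omega_\rho$. One small caveat worth recording: Gutzmer's formula as quoted assumes one side is finite, so in your ``simultaneously finite or infinite'' statement you should either invoke the monotone-convergence structure of its proof or, as you effectively do, restrict to the a.e.\ base points where the orbital integral of an $L^2(\Omega_\rho)$ function is finite before integrating over $B_\rho$.
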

\begin{proof}  For $ F \in \widetilde{\mathcal{H}}(\Omega_{\rho})$ consider its norm squared
$$  
\int_{\Omega_\rho} |F(z,w,\zeta)|^2 \,dz\,dw\,d\zeta = \int_{\He^n} \Big(\int_{\Omega_\rho(x,u,\xi)} |F(x+iy,u+iv,\xi+i\eta)|^2 \,dy\,dv\,d\eta\Big) \,dx\,du\,d\xi
$$ 
where 
$$ 
\Omega_\rho(x,u,\xi) =\big \{ (y,v,\eta): (|y|^2+|v|^2)^2+16 \big( \eta-\frac{1}{2}[(x,u),(y,v)]\big)^2 < \rho^4\big \}.
$$
By making a change of variables in $ \eta $ the above integral takes the form
\begin{multline*}
\int_{\He^n} \Big(\int_{\Omega_\rho(0)} \big|F\big(x+iy,u+iv,\xi+i(\eta+\frac{1}{2}[(x,u),(y,v)])\big)\big|^2\, dy\,dv\,d\eta\Big)\, dx\,du\,d\xi\\
= \int_{\He^n} \Big(\int_{\Omega_\rho(0)} \big|F\big((x,u,\xi)(iy,iv,i\eta)\big)\big|^2\, dy\,dv\,d\eta\Big) \,dx\,du\,d\xi,
\end{multline*}
where we used that 
\begin{equation}
\label{produ}
(x,u,\xi)\cdot (iy,iv,i\eta) = \big(z,w, \zeta+\frac{1}{4}(z\cdot \bar{w}-w\cdot\bar{z})\big)= \big(z,w, \zeta+\frac{i}{2}[(x,u),(y,v)]\big),
\end{equation}
which folllows from the definition \eqref{law}.
As the Lebesgue measure $ dy \,dv $ is invariant under the change of variables $ (y,v) \rightarrow \sigma\cdot(y,v)$, $\sigma \in U(n) $, the last integral can be written as
$$
\int_{\He^n} \Big(  \int_{U(n)}\int_{\Omega_\rho(0)} |F((x,u,\xi)(i \sigma\cdot (y,v),i\eta))|^2 \,dy\,dv\,d\eta\,  d\sigma \Big) \,dx\,du\,d\xi
$$
which proves that 
$$  \int_{\Omega_{\rho}} | F(z,w,\zeta)|^2 \,dz\,dw\,d\zeta  = \int_{\Omega_\rho(0)} \Big(  \int_{G_n} | F(g\cdot(iy,iv,i\eta)|^2 dg\Big) \,dy\,dv\,d\eta.
$$
The inner integral can be evaluated using Gutzmer's formula in Theorem \ref{thm:gutzmer}, which gives
$$ 
\int_{G_n} | F(g\cdot(iy,iv,i\eta)|^2 dg=c_n \int_{-\infty}^\infty e^{2\lambda \eta} \Big( \sum_{k=0}^\infty \|F^\lambda \ast_\lambda \varphi_k^\lambda\|_2^2 \frac{k!(n-1)!}{(k+n-1)!} \varphi_k^\lambda(2iy,2iv) \Big) d\mu(\lambda).
$$
Since $ \Omega_\rho(0) = \{ h \in \He^n: |h| < \rho \} $, integrating in polar coordinates and recalling the definition of $ \Psi_k^\lambda(\rho) $ in \eqref{Psi} we obtain 
$$ 
 \int_{\Omega_{\rho}} | F(z,w,\zeta)|^2 \,dz\,dw\,d\zeta  = C_{n}  \int_{-\infty}^\infty  \Big( \sum_{k=0}^\infty \|f^\lambda \ast_\lambda \varphi_k^\lambda\|_2^2 \,\, \Psi_k^\lambda(\rho) \Big) d\mu(\lambda).
$$

In order to prove the converse, we consider the full Laplacian on the Heisenberg group $\Delta:=-\partial_\xi^2+\mathcal{L}$. Note that this is a non-negative operator. The holomorphic extension of solutions of the heat equation associated to this operator has been studied in \cite{KTX}. It is known that for $ f \in L^2(\He^n) $ the function 
$e^{-t\Delta}f(x,u,\xi) $ has an entire extension to $\C^{2n+1}$ which we call $ F_t(z,w,\zeta)$.  Then Gutzmer's formula from Theorem \ref{thm:gutzmer} is valid and we have
\begin{multline}
\label{eq:gutheat}
\int_{G_n} |F_t(g\cdot(iy,iv,i\eta))|^2 \,dg  \\
 = c_n \int_{-\infty}^\infty e^{2\lambda \eta} \Big( \sum_{k=0}^\infty \|f^\lambda \ast_\lambda \varphi_k^\lambda\|_2^2 e^{-2t\lambda^2}e^{-2t(2k+n)|\lambda|}\frac{k!(n-1)!}{(k+n-1)!} \varphi_k^\lambda(2iy,2iv) \Big)\, d\mu(\lambda).
\end{multline}
The right hand side of \eqref{eq:gutheat} reads, after integration over  $\Omega_{\rho}(0)$, in view of \eqref{Psi}:
$$
C_{n}  \int_{-\infty}^\infty  \Big( \sum_{k=0}^\infty \|f^\lambda \ast_\lambda \varphi_k^\lambda\|_2^2 \, e^{-2t(\lambda^2+(2k+n)|\lambda|)}\Psi_k^\lambda(\rho) \Big)\, d\mu(\lambda).
$$
The left hand side of \eqref{eq:gutheat} reads, after integration over $\Omega_{\rho}(0)$:
\begin{align*}
%\label{lhs}
&\int_{|(y,v,\eta)|<\rho}\int_{U(n)}\int_{\He^n}|F_t((x,u,\xi,k)\cdot (iy,iv,i\eta))|^2\,dx\,du\,d\xi\,dkdy\,dv\,d\eta\\
&\qquad=\int_{|(y,v,\eta)|<\rho} \int_{U(n)} \int_{\He^n}|F_t((x,u,\xi)\cdot (k(iy,iv),i\eta)|^2\,dx\,du\,d\xi\, dk\, dy\,dv\,d\eta\\
&\qquad = \int_{|(y,v,\eta)|<\rho}\int_{\He^n}\Big|F_t(z,w,\zeta+\frac{i}{2}[(x,u),(y,v)]))\Big|^2\,dz\,dw\,d\zeta
\end{align*}
where in the second equality we have used again the fact that the measure $dy\,dv$ is  invariant under the action of the unitary group $ U(n) $ and \eqref{produ}.  Thus we have proved the identity
\begin{equation}  
\label{omegar}
\int_{\Omega_{\rho}} | F_t(z,w,\zeta)|^2 \,dz\,dw\,d\zeta  = C_{n,s}  \int_{-\infty}^\infty  \Big( \sum_{k=0}^\infty \|f^\lambda \ast_\lambda \varphi_k^\lambda\|_2^2 \,\, e^{-2t(\lambda^2+(2k+n)|\lambda|)}\Psi_k^\lambda(\rho) \Big) d\mu(\lambda).
\end{equation}

 Now let us take a sequence $t_j\to 0$  and call $F_j:=F_{t_j}$. In view of the identity \eqref{omegar} we write
  \begin{multline*}
% \label{ident1}
 \int_{\Omega_{\rho}} \Big|F_j(z,w,\zeta)-F_m(z,w,\zeta)\Big|^2 \,dz\,dw\,d\zeta
 \\= c_n \int_{-\infty}^\infty  \Big( \sum_{k=0}^\infty \|f^\lambda \ast_\lambda \varphi_k^\lambda\|_2^2 \big|e^{-2t_j\lambda^2}e^{-2t_j(2k+n)|\lambda|}-e^{-2t_m\lambda^2}e^{-2t_m(2k+n)|\lambda|}\big|  \Psi_k^\lambda(\rho)\Big) d\mu(\lambda).
 \end{multline*}
  If we assume that the function $ f \in \mathcal{H}_\rho(\He^n) $, it follows that $ F_j $ is Cauchy in $ L^2(\Omega_\rho) $ and hence converges to an $ F \in \widetilde{\mathcal{H}}(\Omega_\rho)$ which gives the holomorphic extension of $ f $, since $ F_t \rightarrow f $ in $ L^2(\He^n).$ Moreover, by taking limit as $ t $ goes to zero in \eqref{omegar} we obtain the equality of norms.
 \end{proof} 
 
%%%%%%%%%%%%%%%%%%%%%%%
\subsection{Holomorphic extensions of solutions of the extension problem}
%%%%%%%%%%%%%%%%%%%%%%%

Theorem \ref{thm:general} has two interesting consequences.  First of all, when  $ F = e^{-t\sqrt{\Delta}}f$, for $f \in L^2(\He^n) $,    it follows that $ e^{-t\sqrt{\Delta}}f \in  \mathcal{H}_{t/\sqrt{2}}(\He^n).$ This is a consequence of the estimate  $ (|\lambda|^2+ (2k+n)|\lambda|) \geq \frac{1}{2} (|\lambda|+\sqrt{(2k+n)|\lambda|})^2$ and the growth estimates on $ \psi_k^\lambda(r).$ We therefore have the following result  on Poisson integrals associated to $ \Delta $ on $ \He^n.$ Compare this with the result obtained in \cite{TPac}.

\begin{thm}  Fix $ t >0.$ Then for any $ f \in L^2(\He^n) $ the function  $ F = e^{-t\sqrt{\Delta}}f $ extends to $ \Omega_{t/\sqrt{2}}$ as a holomorphic function $\widetilde{F}$ and satisfies the estimate
$$  \int_{\Omega_{t/\sqrt{2}}} |\widetilde{F}(z,w,\zeta)|^2 \,dz\,dw\,d\zeta \leq C   \int_{\He^n} |f(x,u,\xi)|^2 dx du d\xi.$$
\end{thm}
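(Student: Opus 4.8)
The plan is to apply Theorem \ref{thm:general} with the specific choice $F = e^{-t\sqrt{\Delta}}f$, so the only real work is to verify that the restriction $f$ lies in $\mathcal{H}_{t/\sqrt{2}}(\He^n)$, which by Definition \ref{defHrho} amounts to establishing the finiteness of
$$
\int_{-\infty}^\infty \Big(\sum_{k=0}^\infty \|f^\lambda \ast_\lambda \varphi_k^\lambda\|_2^2\, e^{-2t\sqrt{\lambda^2+(2k+n)|\lambda|}}\, \Psi_k^\lambda(t/\sqrt{2})\Big)\, d\mu(\lambda).
$$
Here I am using that the subordination formula (or direct functional calculus) expresses $e^{-t\sqrt{\Delta}}f$ spectrally by replacing the heat multiplier $e^{-t(\lambda^2+(2k+n)|\lambda|)}$ appearing in \eqref{eq:gutheat} by $e^{-t\sqrt{\lambda^2+(2k+n)|\lambda|}}$, and that this function of $f$ has an entire extension to $\C^{2n+1}$ just as $F_t$ does (this can be obtained by integrating the heat-kernel extension against the subordination measure, since the estimates are uniform enough to pass the extension through the integral). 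Strictly, one works with $F_t' := e^{-t\sqrt{\Delta}}f$ and reruns the argument of Theorem \ref{thm:general}: the identity \eqref{omegar} holds verbatim with $e^{-2t(\lambda^2+(2k+n)|\lambda|)}$ replaced by $e^{-2t\sqrt{\lambda^2+(2k+n)|\lambda|}}$.

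The key step is then the pointwise bound: I would combine the upper estimate for $\psi_k^\lambda(r)$ from Lemma \ref{lem:upperpsi}, namely $|\psi_k^\lambda(r)| \le C_n e^{\frac12|\lambda| r^2} e^{2r\sqrt{(2k+n)|\lambda|}}$, integrated against $r^{Q-1}\,dr$ over $0 < r < t/\sqrt 2$ (so $r^2 \le t^2/2$) to get
$$
\Psi_k^\lambda(t/\sqrt 2) \le C_n\, t^{Q}\, e^{\frac14 |\lambda| t^2}\, e^{\sqrt{2}\,t\sqrt{(2k+n)|\lambda|}},
$$
and then check that this growth is dominated by the decay of the subordinated heat factor. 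For this I use the elementary inequality $\lambda^2 + (2k+n)|\lambda| \ge \tfrac12\big(|\lambda| + \sqrt{(2k+n)|\lambda|}\big)^2$, already highlighted in the text preceding the statement, which gives
$$
e^{-2t\sqrt{\lambda^2+(2k+n)|\lambda|}} \le e^{-\sqrt 2\, t\,(|\lambda| + \sqrt{(2k+n)|\lambda|})} = e^{-\sqrt 2\, t\, |\lambda|}\, e^{-\sqrt 2\, t\, \sqrt{(2k+n)|\lambda|}}.
$$
Multiplying the two displays, the factors $e^{\pm\sqrt 2\, t\sqrt{(2k+n)|\lambda|}}$ cancel exactly, and one is left with $e^{\frac14 |\lambda| t^2} e^{-\sqrt 2\, t |\lambda|}$ times polynomial factors — this decays (or is at worst bounded, and in fact decays for all $\lambda$ once one keeps a little room) in $\lambda$, so that the series and integral are controlled by $\sum_k \int \|f^\lambda\ast_\lambda\varphi_k^\lambda\|_2^2\, d\mu(\lambda) = c_n^{-1}\|f\|_{L^2(\He^n)}^2$ via the Plancherel theorem for $\He^n$.

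Once $f \in \mathcal{H}_{t/\sqrt2}(\He^n)$ is established, Theorem \ref{thm:general} immediately provides an $\widetilde F \in \widetilde{\mathcal{H}}(\Omega_{t/\sqrt2})$ extending $f$, together with the norm identity, and the chain of inequalities above upgrades that identity to the claimed bound
$$
\int_{\Omega_{t/\sqrt2}} |\widetilde F(z,w,\zeta)|^2\, dz\,dw\,d\zeta = C_n \int_{-\infty}^\infty\Big(\sum_{k=0}^\infty \|f^\lambda\ast_\lambda\varphi_k^\lambda\|_2^2\, e^{-2t\sqrt{\lambda^2+(2k+n)|\lambda|}}\,\Psi_k^\lambda(t/\sqrt2)\Big)d\mu(\lambda) \le C\,\|f\|_{L^2(\He^n)}^2.
$$
The main obstacle I anticipate is not any single inequality but the bookkeeping around the exponent matching: one must make sure the cancellation $e^{2r\sqrt{(2k+n)|\lambda|}}$ against $e^{-2t\sqrt{\lambda^2+(2k+n)|\lambda|}}$ leaves a genuinely negative exponent with a fixed positive margin (which is why $r$ ranges only up to $t/\sqrt2$ rather than $t$ — the factor $\sqrt 2$ in $\Omega_{t/\sqrt2}$ is exactly tuned so that $\sqrt 2\,t > $ the constant $2\cdot(t/\sqrt2) = \sqrt2\, t$ from the $\psi_k^\lambda$ bound is matched and the $|\lambda|$-term $e^{\frac14|\lambda|t^2 - \sqrt2 t|\lambda|}$ is negative for all $\lambda$), and to justify rigorously that $e^{-t\sqrt{\Delta}}f$ inherits the entire extension and the Gutzmer identity from the heat semigroup via subordination. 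Both are routine given the estimates already in hand, so I would state them with brief justification and refer to Theorem \ref{thm:general} and Lemma \ref{lem:upperpsi} for the substance.
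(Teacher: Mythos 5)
Your proposal follows the paper's own route exactly: the paper disposes of this theorem in a single sentence preceding the statement, citing the inequality $\lambda^2+(2k+n)|\lambda|\ge\tfrac12\big(|\lambda|+\sqrt{(2k+n)|\lambda|}\big)^2$ together with the growth estimate on $\psi_k^\lambda$ from Lemma \ref{lem:upperpsi} and then invoking Theorem \ref{thm:general}, and you have simply written that sentence out in full, correctly. One caveat: after the cancellation of the $\sqrt{(2k+n)|\lambda|}$ exponentials the surviving factor is $e^{(\frac14 t^2-\sqrt2\,t)|\lambda|}$, which is bounded only for $t\le 4\sqrt2$, so your parenthetical claim that this exponent is negative for all $\lambda$ holds only in that range of $t$ --- but this quantitative limitation is already present in the paper's own one-line justification and is not something you introduced.
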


The second consequence concerns the heat semigroup $ e^{-t\Delta} $, in which case we can do better. When $ f \in L^2(\He^n) $ it is clear that  $ F = e^{-t\Delta}f $ belongs to $ \mathcal{H}_\rho(\He^n)$ for any $ \rho > 0.$ Moreover, $ F $ extends to $ \C^{2n+1} $ as an entire function, see \cite{KTX}. 

\begin{thm}  
\label{entire}
Fix $ t >0.$ Then for any $ f \in L^2(\He^n) $ the function $ F = e^{-t\Delta}f $ has an entire extension $\widetilde{F}$ to $ \C^{2n+1} $ and for each $ r>0$ there exists a constant $ C(r) $ such that
$$  
\int_{\Omega_{r}} |\widetilde{F}(z,w,\zeta)|^2 \,dz\,dw\,d\zeta \leq C(r)   \int_{\He^n} |f(x,u,\xi)|^2 \,dx \,du \,d\xi .
$$ 
Moreover, we also have
$$ 
\int_{\C^{2n+1}} |\widetilde{F}(z,w,\zeta)|^2 e^{-\frac{2}{t} |\Im(z,w,\zeta)|^4} \,dz\,dw\,d\zeta \leq C   \int_{\He^n} |f(x,u,\xi)|^2 \,dx\, du \,d\xi.
$$
\end{thm}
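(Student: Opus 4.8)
The plan is to read off both inequalities from the Gutzmer machinery behind Theorem~\ref{thm:general}, using that $F=e^{-t\Delta}f$ is entire for $f\in L^2(\He^n)$ by \cite{KTX} and that $\|F^\lambda\ast_\lambda\varphi_k^\lambda\|_2^2=e^{-2t(\lambda^2+(2k+n)|\lambda|)}\|f^\lambda\ast_\lambda\varphi_k^\lambda\|_2^2$. For the estimate on $\Omega_r$, the identity \eqref{omegar} (which is the proof of Theorem~\ref{thm:general} run for the heat semigroup) reads
\begin{equation*}
\int_{\Omega_r}|\widetilde F(z,w,\zeta)|^2\,dz\,dw\,d\zeta=C_n\int_{-\infty}^\infty\Big(\sum_{k=0}^\infty\|f^\lambda\ast_\lambda\varphi_k^\lambda\|_2^2\,e^{-2t(\lambda^2+(2k+n)|\lambda|)}\,\Psi_k^\lambda(r)\Big)\,d\mu(\lambda).
\end{equation*}
Since $\psi_k^\lambda$ is increasing, Lemma~\ref{lem:upperpsi} gives $\Psi_k^\lambda(r)=c_n\int_0^r\psi_k^\lambda(\rho)\rho^{Q-1}\,d\rho\le C_n\,r^Q\,e^{\frac12|\lambda|r^2}e^{2r\sqrt{(2k+n)|\lambda|}}$, and maximising in $|\lambda|$ and in $\sqrt{(2k+n)|\lambda|}$ against the heat factor yields
\begin{equation*}
e^{-2t(\lambda^2+(2k+n)|\lambda|)}\,\Psi_k^\lambda(r)\le C_n\,r^Q\,e^{r^4/(32t)}\,e^{r^2/(2t)}=:C(r)
\end{equation*}
uniformly in $k$ and $\lambda$; plugging this in and using the Plancherel theorem (which makes $C_n\int\sum_k\|f^\lambda\ast_\lambda\varphi_k^\lambda\|_2^2\,d\mu(\lambda)$ a fixed multiple of $\|f\|_2^2$) gives the first bound.

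For the Gaussian-weighted estimate I would repeat the change of variables from the proof of Theorem~\ref{thm:general}, now letting the real Heisenberg variable $(x,u,\xi)$ run over all of $\He^n$ and $(y,v,\eta)$ over all of $\He^n$ in place of $\Omega_\rho(0)$; since the weight depends only on the homogeneous norm $|(y,v,\eta)|$ of the imaginary part (the quantity in \eqref{tube}) it is $G_n$-invariant, so after averaging the inner $\He^n$-integral over $U(n)$ (using $U(n)$-invariance of $dy\,dv$) and applying Gutzmer's formula (Theorem~\ref{thm:gutzmer}) together with the polar decomposition of the Haar measure on $\He^n$, one obtains
\begin{equation*}
\int_{\C^{2n+1}}|\widetilde F|^2\,e^{-\frac2t|\Im(z,w,\zeta)|^4}\,dz\,dw\,d\zeta=c_n\int_0^\infty e^{-\frac2t r^4}\Big(\int_{-\infty}^\infty\sum_{k=0}^\infty\|f^\lambda\ast_\lambda\varphi_k^\lambda\|_2^2\,e^{-2t(\lambda^2+(2k+n)|\lambda|)}\psi_k^\lambda(r)\,d\mu(\lambda)\Big)r^{Q-1}\,dr.
\end{equation*}
By Tonelli I would move the $r$-integral inside, so that the whole estimate reduces to the single elementary bound
\begin{equation*}
e^{-2t(\lambda^2+(2k+n)|\lambda|)}\int_0^\infty e^{-\frac2t r^4}\psi_k^\lambda(r)\,r^{Q-1}\,dr\le C(n,t)
\end{equation*}
uniformly in $k,\lambda$; granting it, one more application of Plancherel finishes the proof. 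To prove this bound one invokes Lemma~\ref{lem:upperpsi} and keeps a fixed fraction of the quartic term, say $-\tfrac1{2t}r^4$, for integration, while using Young's inequality to absorb $\tfrac12|\lambda|r^2$ and $2r\sqrt{(2k+n)|\lambda|}$ into the remaining $-\tfrac3{2t}r^4$; the leftover contributions are bounded by $Ct\lambda^2$ and by $C_n((2k+n)|\lambda|)^{2/3}$, both absorbed by the heat factor $e^{-2t(\lambda^2+(2k+n)|\lambda|)}$.

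The only genuine point to check is this last uniform bound: one must see that the quartic Gaussian decay $e^{-2r^4/t}$ and the exponential heat decay $e^{-2t(\lambda^2+(2k+n)|\lambda|)}$ together beat the growth $e^{\frac12|\lambda|r^2+2r\sqrt{(2k+n)|\lambda|}}$ of $\psi_k^\lambda(r)$ simultaneously in the three parameters $r$, $|\lambda|$ and $k$. Since $\psi_k^\lambda(r)$ grows only like $e^{O(r^2)}$ in $r$ against an $e^{-O(r^4)}$ weight, with residual growth subquadratic in $|\lambda|$ and sublinear in $k$ against quadratic, respectively linear, decay, the competition is won with room to spare, so no real obstacle arises and the theorem is essentially a corollary of Theorem~\ref{thm:general} and Lemma~\ref{lem:upperpsi}. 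If one wishes to avoid citing \cite{KTX} for the entire extendability, it can be recovered from the above, which shows $e^{-t\Delta}f\in\mathcal{H}_\rho(\He^n)$ for every $\rho>0$; Theorem~\ref{thm:general} then furnishes a holomorphic representative on each $\Omega_\rho$, and these agree on overlaps and glue to an entire function because $\bigcup_{\rho>0}\Omega_\rho=\C^{2n+1}$.
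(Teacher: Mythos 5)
Your argument is correct and is essentially the paper's own proof: both rest on the Gutzmer-based identity \eqref{ident} coming out of Theorem \ref{thm:general}, the upper bound of Lemma \ref{lem:upperpsi} on $\psi_k^\lambda(r)$, and balancing the heat factor against that growth (your Young's-inequality absorption is exactly the Legendre-transform maximisation the paper performs to get \eqref{maxim}, after which the $r$-integral against $e^{-2r^4/t}r^{Q-1}\,dr$ converges). One caveat on your closing aside: recovering the entire extension from Theorem \ref{thm:general} instead of citing \cite{KTX} is circular, because the converse half of Theorem \ref{thm:general} itself invokes the entire extendability of $e^{-t\Delta}f$ established in \cite{KTX}.
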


\begin{proof} The proof, in Theorem \ref{thm:general}, that $ F $ has a holomorphic  extension $ \widetilde{F} \in \widetilde{\mathcal{H}}(\Omega_r)$ actually gives the identity
\begin{multline*}
 \int_{\He^n} \Big( \int_0^{\tau}  \int_{|b| =r} |\widetilde{F}(a+ib)|^2  r^{Q-1} \,d\sigma_r(b) \,dr \Big) \,da\\
  = C_{n}  \int_{-\infty}^\infty  \Big( \sum_{k=0}^\infty \|F^\lambda \ast_\lambda \varphi_k^\lambda\|_2^2 \,\int_0^{\tau} \psi_k^\lambda(r) r^{Q-1}\, dr  \Big)\, d\mu(\lambda).
\end{multline*}
Differentiating the above with respect to $ \tau$ we obtain the following identity,  valid for any $ r > 0 $:
\begin{equation}
\label{ident}
 \int_{\He^n} \Big(  \int_{|b| =r} |\widetilde{F}(a+ib)|^2   d\sigma_r(b)  \Big) \,da  = C_{n}  \int_{-\infty}^\infty  \Big( \sum_{k=0}^\infty \|f^\lambda \ast_\lambda \varphi_k^\lambda\|_2^2  w_k^\lambda(r) \Big)\, d\mu(\lambda)
\end{equation}
where $ w_k^\lambda(r) = e^{-2t |\lambda|^2} e^{-2t((2k+n)|\lambda|)} \psi_k^\lambda(r).$ In view of the estimate for $ \psi_k^\lambda(r) $ proved in Lemma \ref{lem:upperpsi},  by maximising the factors $ 
e^{-2t |\lambda|^2} e^{\frac{1}{2}|\lambda|r^2}$ and $ 
e^{-2t((2k+n)|\lambda|)}e^{2r \sqrt{(2k+n)|\lambda|}} $ separately, we can easily verify that
\begin{equation}
\label{maxim}
w_k^\lambda(r) \leq C e^{\frac{r^2}{2t}} e^{\frac{r^4}{32 t}} \leq C e^{\frac{r^4}{t}}.
\end{equation}
Integrating the identity \eqref{ident}   with respect to the measure $ r^{Q-1} e^{-\frac{2}{t} r^4} dr $ and using the estimate on $ w_k^\lambda(r) $ we complete the proof.
\end{proof}

Using the estimates proved in Lemma \ref{lem:estimatec} on the function $c_{k,\rho^2}^{\lambda/4}(s) $ we can now prove the following result on solutions of the extension problem \eqref{epg}. Recall the function $\Phi_{s,\rho}(\cdot)$ in  \eqref{Phi}.

\begin{thm}
\label{thm:solut}
For $ 0 < s \le 1/2 $, let  $ U(\cdot,\rho) =  \rho^{2s} f \ast \Phi_{s,\rho}(\cdot)$ where $ f \in L^2(\He^n) .$ Then for any $ 0 < \gamma \leq -1+\sqrt{2} $ the solution of the extension problem \eqref{epg} extends to $ \Omega_{ \gamma \rho} $ as a holomorphic function $ \widetilde{U}$, belongs to $ \widetilde{\mathcal{H}}(\Omega_{\gamma \rho}) $ and satisfies the uniform estimate  
$\| \widetilde{U}(\cdot,\rho) \|_{\widetilde{\mathcal{H}}(\Omega_{ \gamma \rho }) } \leq C \rho^{Q/2} \|  f\|_2$ for all $ \rho >0 $. 
\end{thm}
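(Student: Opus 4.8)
The plan is to reduce the statement to a single scalar inequality, comparing the Laguerre coefficient $c_{k,\rho^2}^{\lambda/4}(s)$ of the kernel $\Phi_{s,\rho}$ against the weight $\Psi_k^\lambda(\gamma\rho)$ attached to the domain $\Omega_{\gamma\rho}$, and then to read off that inequality from Lemma \ref{lem:upperpsi} and Lemma \ref{lem:estimatec}; the constant $-1+\sqrt2$ will enter exactly as the threshold at which the two competing Gaussian factors (one from the Poisson kernel, one from the central volume growth of $\Omega_{\gamma\rho}$) balance.

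Concretely, write $U_\rho(x,u,\xi):=U(x,u,\xi,\rho)=\rho^{2s}f\ast\Phi_{s,\rho}(x,u,\xi)$. As observed just after Proposition \ref{propck}, the Laguerre coefficients of the solution are
\[
U_\rho^\lambda\ast_\lambda\varphi_k^\lambda=C_{n,s}\,\rho^{2s}\,c_{k,\rho^2}^{\lambda/4}(s)\,f^\lambda\ast_\lambda\varphi_k^\lambda,
\qquad\text{so that}\qquad
\|U_\rho^\lambda\ast_\lambda\varphi_k^\lambda\|_2^2=C_{n,s}^2\,\rho^{4s}\big(c_{k,\rho^2}^{\lambda/4}(s)\big)^2\|f^\lambda\ast_\lambda\varphi_k^\lambda\|_2^2 .
\]
Since $f\in L^2(\He^n)$ and $\Phi_{s,\rho}\in L^1(\He^n)$, Young's inequality gives $U_\rho\in L^2(\He^n)$. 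Applying Theorem \ref{thm:general} with the domain $\Omega_{\gamma\rho}$, the function $U_\rho$ extends to a holomorphic $\widetilde U_\rho\in\widetilde{\mathcal{H}}(\Omega_{\gamma\rho})$ (restricting to $U_\rho$ on $\He^n$) precisely when $U_\rho\in\mathcal{H}_{\gamma\rho}(\He^n)$, and in that case, absorbing the constant of Theorem \ref{thm:general} and $C_{n,s}^2$ into $C_{n,s}$,
\[
\|\widetilde U_\rho\|_{\widetilde{\mathcal{H}}(\Omega_{\gamma\rho})}^2
=C_{n,s}\,\rho^{4s}\int_{-\infty}^\infty\Big(\sum_{k=0}^\infty\big(c_{k,\rho^2}^{\lambda/4}(s)\big)^2\|f^\lambda\ast_\lambda\varphi_k^\lambda\|_2^2\,\Psi_k^\lambda(\gamma\rho)\Big)\,d\mu(\lambda).
\]
Thus the theorem reduces to the uniform bound
\[
\rho^{4s}\big(c_{k,\rho^2}^{\lambda/4}(s)\big)^2\,\Psi_k^\lambda(\gamma\rho)\le C_{n,s}\,\rho^{Q}\qquad\text{for all }k\ge0,\ \lambda\in\R,\ \rho>0 ,
\]
since then the integral above is at most $C_{n,s}\,\rho^{Q}\int\sum_k\|f^\lambda\ast_\lambda\varphi_k^\lambda\|_2^2\,d\mu(\lambda)=C_{n,s}\,\rho^Q\|f\|_2^2$ by the Plancherel identity following \eqref{decol}; in particular $U_\rho\in\mathcal{H}_{\gamma\rho}(\He^n)$, which yields the holomorphic extension, and taking square roots gives $\|\widetilde U_\rho(\cdot,\rho)\|_{\widetilde{\mathcal{H}}(\Omega_{\gamma\rho})}\le C\rho^{Q/2}\|f\|_2$.

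To prove the bound I would estimate the two factors separately. For the weight: since $\psi_k^\lambda(r)$ is increasing in $r$ (as noted after \eqref{eq:psi}), formula \eqref{Psiint} gives $\Psi_k^\lambda(\gamma\rho)\le c_n\psi_k^\lambda(\gamma\rho)\int_0^{\gamma\rho}r^{Q-1}\,dr\le C_n\,\rho^{Q}\,\psi_k^\lambda(\gamma\rho)$ (using $\gamma<1$), and then the upper estimate of Lemma \ref{lem:upperpsi}, once made uniform in $k$ and $\lambda$, yields $\Psi_k^\lambda(\gamma\rho)\le C_n\,\rho^{Q}\,e^{\frac12\gamma^2\rho^2|\lambda|}\,e^{2\gamma\rho\sqrt{(2k+n)|\lambda|}}$. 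For the coefficient, Lemma \ref{lem:estimatec}, applied with its free parameter taken equal to the given $\gamma$ (permissible because $0<\gamma\le-1+\sqrt2<1/2$ and $0<s\le1/2$), gives $\rho^{4s}\big(c_{k,\rho^2}^{\lambda/4}(s)\big)^2\le C_{n,s}\,e^{-2\gamma\rho\sqrt{(2k+n+1-s)|\lambda|}}\,e^{-\frac12(1-2\gamma)\rho^2|\lambda|}$. Multiplying the two estimates, the square-root exponents combine into $2\gamma\rho\big(\sqrt{(2k+n)|\lambda|}-\sqrt{(2k+n+1-s)|\lambda|}\big)\le0$ because $s\le\tfrac12<1$, while the central exponents combine into $\tfrac12(\gamma^2+2\gamma-1)\rho^2|\lambda|\le0$ precisely because $\gamma\le-1+\sqrt2$, the positive root of $\gamma^2+2\gamma-1=0$. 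Hence $\rho^{4s}\big(c_{k,\rho^2}^{\lambda/4}(s)\big)^2\Psi_k^\lambda(\gamma\rho)\le C_{n,s}\rho^{Q}$, completing the argument.

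The routine but slightly delicate point is converting the Laguerre asymptotics of Lemma \ref{lem:upperpsi}, stated only in the regime $(2k+n)|\lambda|\to\infty$, into a bound valid uniformly over all $k,\lambda$: the bounded range of $(2k+n)|\lambda|$ is handled by the crude estimates already present in that lemma's proof (e.g. \eqref{seconde}). The genuinely conceptual step — and the only place the number $-1+\sqrt2$ appears — is the exact cancellation in the exponentials above: $\gamma=-1+\sqrt2$ is the largest dilation factor for which the Gaussian decay $e^{-\frac12(1-2\gamma)\rho^2|\lambda|}$ inherited from the kernel $\Phi_{s,\rho}$ still dominates the Gaussian growth $e^{\frac12\gamma^2\rho^2|\lambda|}$ produced by the volume of $\Omega_{\gamma\rho}$ in the central variable. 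The holomorphic extension so obtained agrees, by uniqueness of analytic continuation, with the explicit one of $\Phi_{s,\rho}$ noted earlier on the smaller slab $|y|^2+|v|^2+4|\eta|<\rho^2$.
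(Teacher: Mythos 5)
Your proposal is correct and follows essentially the same route as the paper: compute the Laguerre coefficients of $U_\rho$ via $c_{k,\rho^2}^{\lambda/4}(s)$, reduce via Theorem \ref{thm:general} to the pointwise bound $\rho^{4s}(c_{k,\rho^2}^{\lambda/4}(s))^2\Psi_k^\lambda(\gamma\rho)\le C\rho^Q$, and obtain it by multiplying the estimates of Lemma \ref{lem:upperpsi} and Lemma \ref{lem:estimatec}, with $\gamma^2\le 1-2\gamma$ giving the threshold $\gamma\le -1+\sqrt2$. Your write-up is in fact slightly more explicit than the paper's (the square-root exponent comparison and the uniformity caveat for the asymptotic regime), but the argument is the same.
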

\begin{proof}
 Let us suppose that the solution of the extension problem is given by  $U(x,u,\xi,\rho)=\rho^{2s}f\ast \Phi_{s,\rho}(x,u,\xi)$ for some $f\in L^2(\He^n)$.  With the notation  $ U_\rho(x,u,\xi) := U(x,u,\xi, \rho) $ we have the relation 
  $$ 
  U_\rho^\lambda \ast_\lambda \varphi_k^\lambda(x,u)  = \frac{2^{-(n+1+s)} }{ \pi^{n+1}\Gamma(s)}  \Gamma\Big(\frac{n+1+s}{2}\Big)^2  \rho^{2s} c_{k,\rho^2}^{\lambda/4}(s) f^\lambda \ast_\lambda \varphi_k^\lambda(x,u).
  $$ 
 Taking \eqref{Psiint}  into account and in view of the estimates proved in Lemmas \ref{lem:upperpsi} and \ref{lem:estimatec} it follows that
\begin{equation} 
\label{gesti}
 \rho^{4s} (c_{k,\rho^2}^{\lambda/4}(s))^2 \Psi_k^\lambda(\gamma \rho) \leq C \rho^Q  e^{\frac{1}{2} \gamma^2 |\lambda|\rho^2} e^{-\frac12 (1-2\gamma) |\lambda| \rho^2}
\end{equation}
  which is bounded by  $  C \rho^Q $ since $ \gamma^2 \leq 1-2\gamma $ under the assumption on $ \gamma.$
  This proves that
  $$ \int_{-\infty}^\infty  \Big( \sum_{k=0}^\infty \|U_\rho^\lambda \ast_\lambda \varphi_k^\lambda\|_2^2 \,\, \Psi_k^\lambda(\gamma \rho) \Big) d\mu(\lambda)\leq C \rho^Q  \int_{-\infty}^\infty  \Big( \sum_{k=0}^\infty    \|f^\lambda \ast_\lambda \varphi_k^\lambda\|_2^2  \Big) d\mu(\lambda) = C \rho^Q \|f\|_2^2.$$
  By Theorem \ref{thm:general} the solution $ U(x,u,\xi,\rho) $ extends to $  \Omega_{\gamma \rho} $ as a holomorphic function $ \widetilde{U}_{\rho}(z,w,\zeta)$ and we have the desired estimate.
  \end{proof} 
   
In using  the estimate \eqref{gesti} we have neglected the Gaussian factor $ e^{-\frac{1}{2} (1-2\gamma-\gamma^2) |\lambda|\rho^2}$ when proving Theorem \ref{thm:solut}. By keeping track of these factors we can actually prove the holomorphic extension of the solution under weaker assumptions on $ f.$ With this in mind, let us define
 \begin{equation}  
 \label{weight2}
 w_k^\lambda(\rho,r)  =\rho^{4s} (c_{k,\rho^2}^{\lambda/4}(s))^2 \psi_k^\lambda(r).
 \end{equation}  As in the proof of Theorem \ref{entire}, we obtain the following identity  valid for any $ 0 < r \leq \gamma \rho $:
$$
 \int_{\He^n} \Big(  \int_{|b| =r} |\widetilde{U}_\rho(a+ib)|^2   d\sigma_r(b)  \Big) da \\ = C_{n,s}  \int_{-\infty}^\infty  \Big( \sum_{k=0}^\infty \|f^\lambda \ast_\lambda \varphi_k^\lambda\|_2^2 \,\, w_k^\lambda(\rho,r) \Big) d\mu(\lambda).
$$
This suggests the introduction of a new space. 
\begin{defn}
For $s, \gamma>0$, we define the space $ \mathcal{H}^s_{\gamma, \rho}(\He^n) $ as the completion of $ C_0^\infty(\He^n) $ with respect to the norm
  $$ 
  \| f \|_{ \mathcal{H}^s_{\gamma,\rho}}^2  =    \int_{-\infty}^\infty  \Big( \sum_{k=0}^\infty \|f^\lambda \ast_\lambda \varphi_k^\lambda\|_2^2 \,\, w_k^\lambda(\rho,\gamma \rho) \Big) d\mu(\lambda).
  $$
  \end{defn}
We also introduce  the following  Hardy type space. 
\begin{defn}
For $s, \gamma >0$, we define the space
$ \widetilde{H}^2(\Omega_{\gamma \rho}) $ consisting of holomorphic functions on $ \Omega_{\gamma \rho} $ for which 
$$  
\| \widetilde{F}\|_{\widetilde{H}^2(\Omega_{\gamma \rho})}^2 =\sup_{0< r < \gamma \rho} \int_{\He^n} \Big(  \int_{|b| =r} |\widetilde{F}(a+ib)|^2  d\sigma_r(b)\Big) da < \infty.
$$
\end{defn}
In view of \eqref{ident}, Theorem \ref{thm:solut} and the fact that $ \psi_k^\lambda(r) $ is an increasing function of $ r $ we observe that, for solutions of the extension problem,
$$  
\| \widetilde{U}_\rho\|_{\widetilde{H}^2(\Omega_{\gamma \rho})}^2 =\int_{\He^n} \Big(  \int_{|b| = \gamma \rho} |\widetilde{U}_\rho(a+ib)|^2\,  d\sigma_{\gamma \rho}(b)\Big) da
$$
provided $ 0 < \gamma < -1+\sqrt{2}$.
With these notations, the proof of the Theorem \ref{thm:solut} leads to the following result, which is the analogue of Proposition \ref{unitary}.
\begin{thm}
For $ 0 < s \le 1/2, $ let  $ U(\cdot,\rho) =  \rho^{2s} f \ast \Phi_{s,\rho}(\cdot)$ where $ f \in  \mathcal{H}^s_{\gamma,\rho}(\He^n) .$ Then for any $ 0 < \gamma \leq -1+\sqrt{2} $ the solution of the extension problem \eqref{epg} extends to $ \Omega_{ \gamma \rho} $ as a holomorphic function $ \widetilde{U}(\cdot,\rho)$, belongs to $\widetilde{H}^2(\Omega_{\gamma \rho}) $ and satisfies the estimate  
$\| \widetilde{U}(\cdot,\rho) \|_{\widetilde{H}^2(\Omega_{ \gamma \rho}) } = C_s \| f \|_{ \mathcal{H}^s_{\gamma,\rho}(\He^n)} $ for all $ \rho >0 .$ Moreover, the map $ T_\rho :  \mathcal{H}^s_{\gamma,\rho}(\He^n) \rightarrow \widetilde{H}^2(\Omega_{ \gamma \rho})$ taking $ f $ into $ \widetilde{U}(\cdot,\rho)$ is surjective.
\end{thm}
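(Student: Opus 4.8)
The plan is to run the proof of Theorem~\ref{thm:solut} while retaining the Gaussian factors that were discarded there, and then to establish surjectivity by a duality argument patterned on Proposition~\ref{unitary}, with the weighted Bergman space replaced by $\widetilde H^2(\Omega_{\gamma\rho})$ and the Plancherel theorem replaced by Gutzmer's formula (Theorem~\ref{thm:gutzmer}). Throughout, $U_\rho=\rho^{2s}f\ast\Phi_{s,\rho}$ is to be read spectrally through the relation $U_\rho^\lambda\ast_\lambda\varphi_k^\lambda=c_{n,s}\rho^{2s}c_{k,\rho^2}^{\lambda/4}(s)\,f^\lambda\ast_\lambda\varphi_k^\lambda$ recorded after Proposition~\ref{propck}. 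For the direct part I would first note that $\Psi_k^\lambda(\gamma\rho)=c_n\int_0^{\gamma\rho}\psi_k^\lambda(r)r^{Q-1}\,dr\le C(\gamma\rho)^Q\psi_k^\lambda(\gamma\rho)$ since $\psi_k^\lambda$ is increasing; together with the spectral relation and the definition of $\mathcal H^s_{\gamma,\rho}(\He^n)$ this forces $U_\rho\in\mathcal H_{\gamma\rho}(\He^n)$ whenever $f\in\mathcal H^s_{\gamma,\rho}(\He^n)$, so Theorem~\ref{thm:general} provides the holomorphic extension $\widetilde U_\rho\in\widetilde{\mathcal H}(\Omega_{\gamma\rho})$. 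The identity recorded just before the definition of $\widetilde H^2(\Omega_{\gamma\rho})$ then gives, for $0<r\le\gamma\rho$, that $\int_{\He^n}\int_{|b|=r}|\widetilde U_\rho(a+ib)|^2\,d\sigma_r(b)\,da$ equals $C_{n,s}\int\sum_k\|f^\lambda\ast_\lambda\varphi_k^\lambda\|_2^2\,w_k^\lambda(\rho,r)\,d\mu(\lambda)$ with $w_k^\lambda(\rho,r)$ as in \eqref{weight2}; since $r\mapsto w_k^\lambda(\rho,r)$ is increasing, the supremum over $0<r<\gamma\rho$ of the left-hand side is the limit as $r\uparrow\gamma\rho$, equal to $C_{n,s}\|f\|_{\mathcal H^s_{\gamma,\rho}}^2$. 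Hence $\widetilde U_\rho\in\widetilde H^2(\Omega_{\gamma\rho})$ and $\|\widetilde U_\rho\|_{\widetilde H^2(\Omega_{\gamma\rho})}=C_s\|f\|_{\mathcal H^s_{\gamma,\rho}}$ with $C_s=C_{n,s}^{1/2}$ independent of $\rho$; in particular $T_\rho$ is a constant multiple of an isometry, so its range is closed and it remains only to prove that the range is dense.

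For the density I would first record that $\widetilde H^2(\Omega_{\gamma\rho})$ is a Hilbert space. For $\widetilde F$ holomorphic on $\Omega_{\gamma\rho}$, a Gutzmer expansion (legitimate after the heat regularization described below) represents $r\mapsto\int_{\He^n}\big(\int_{|b|=r}|\widetilde F(a+ib)|^2\,d\sigma_r(b)\big)da$ as a positive superposition of the functions $\psi_k^\lambda(r)$, each increasing in $r$; hence this quantity is non-decreasing, the $\widetilde H^2$-norm is its monotone limit as $r\uparrow\gamma\rho$, and $\widetilde H^2(\Omega_{\gamma\rho})$ is isometric to the space of spectral data $\{g^\lambda\ast_\lambda\varphi_k^\lambda\}$ of the restriction $g=\widetilde F|_{\He^n}$ with norm $\int\sum_k\|g^\lambda\ast_\lambda\varphi_k^\lambda\|_2^2\,\psi_k^\lambda(\gamma\rho)\,d\mu(\lambda)$, its inner product being given by the polarized Gutzmer formula.

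Suppose now $\widetilde F\in\widetilde H^2(\Omega_{\gamma\rho})$ is orthogonal to $T_\rho f$ for every $f\in\mathcal H^s_{\gamma,\rho}(\He^n)$. Polarizing the identity of the first paragraph turns this into
\begin{equation*}
\int_{-\infty}^{\infty}\Big(\sum_{k=0}^{\infty}c_{k,\rho^2}^{\lambda/4}(s)\,\psi_k^\lambda(\gamma\rho)\,\big\langle f^\lambda\ast_\lambda\varphi_k^\lambda,\,g^\lambda\ast_\lambda\varphi_k^\lambda\big\rangle\Big)\,d\mu(\lambda)=0
\end{equation*}
for all $f$, where $g=\widetilde F|_{\He^n}$. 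Since $c_{k,\rho^2}^{\lambda/4}(s)>0$ by the strictly positive integral formula of Proposition~\ref{propck} and $\psi_k^\lambda(\gamma\rho)>0$, letting $f$ range over $C_0^\infty(\He^n)$ (whose spectral projections are dense) forces $g^\lambda\ast_\lambda\varphi_k^\lambda=0$ for almost every $\lambda$ and every $k$; by the spectral decomposition \eqref{decol} this gives $g\equiv0$ on $\He^n$, and since $\He^n$ is totally real in $\C^{2n+1}$ we conclude $\widetilde F\equiv0$. This establishes density of the range and hence the surjectivity of $T_\rho$.

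The step I expect to be the main obstacle is justifying the use of Gutzmer's formula for $\widetilde U_\rho$ and for a general $\widetilde F\in\widetilde H^2(\Omega_{\gamma\rho})$, since Theorem~\ref{thm:gutzmer} is stated only for entire functions whereas these are merely holomorphic on the tube $\Omega_{\gamma\rho}$. I would overcome this exactly as in the converse part of Theorem~\ref{thm:general}: one checks that the restriction $g=\widetilde F|_{\He^n}$ belongs to $L^2(\He^n)$ (from the uniform bound of the sphere integrals together with the concentration of $\sigma_r$ at the origin as $r\downarrow0$), applies Gutzmer to the entire functions $e^{-\tau\Delta}g$, and passes to the limit $\tau\downarrow0$ using that $e^{-\tau\Delta}g\to\widetilde F$ uniformly on compact subsets of $\Omega_{\gamma\rho}$ together with monotone convergence on the spectral side and the increasing-in-$r$ property of $\psi_k^\lambda$. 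A secondary, harmless point to be spelled out is that $\mathcal H^s_{\gamma,\rho}(\He^n)$ is defined as an abstract completion, so the duality step needs the spectral data of $C_0^\infty(\He^n)$ to be dense in the relevant weighted space; this holds because the multipliers $\rho^{2s}c_{k,\rho^2}^{\lambda/4}(s)$ never vanish.
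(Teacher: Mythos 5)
Your proposal is correct and follows essentially the same route as the paper: the direct part is the identity for the sphere integrals combined with the monotonicity of $\psi_k^\lambda(r)$, and surjectivity is obtained by noting that $T_\rho$ is a constant multiple of an isometry (hence has closed range) and then proving density of the range by polarising the Gutzmer-type identity and using the strict positivity of $c_{k,\rho^2}^{\lambda/4}(s)$ and $\psi_k^\lambda(\gamma\rho)$ to force the restriction $g$ of an orthogonal element to vanish. The only difference is that you explicitly justify applying Gutzmer's formula to functions that are merely holomorphic on the tube (via heat regularisation and monotone convergence), a point the paper's proof passes over silently; this is a welcome extra precaution rather than a divergence in method.
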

\begin{proof} We only need to prove the surjectivity of the map $ T_\rho.$ As it is a constant multiple of an isometry it has closed range. Hence it is enough to show that the range is dense.
By polarising \eqref{ident} we obtain, for $ F, G \in \widetilde{H}^2(\Omega_{ \gamma \rho})$
$$
 \int_{\He^n} \Big(  \int_{|b| =r} F(a+ib)\overline{G(a+ib)}  d\sigma_r(b)  \Big) \,da  = C_{n}  \int_{-\infty}^\infty  \Big( \sum_{k=0}^\infty \langle f^\lambda \ast_\lambda \varphi_k^\lambda,   g^\lambda \ast_\lambda \varphi_k^\lambda \rangle_{L^2(\C^n)} w_k^\lambda(r) \Big)\, d\mu(\lambda)
$$
where $ f $ and $ g$ are the restrictions of $ F $ and $ G $ respectively to $ \He^n.$ If $ G $ is orthogonal to the range of $ T_\rho $ then for any $ f \in \mathcal{H}^s_{\gamma,\rho}(\He^n) $ we have
$$\int_{-\infty}^\infty  \Big( \sum_{k=0}^\infty \langle f^\lambda \ast_\lambda \varphi_k^\lambda,   g^\lambda \ast_\lambda \varphi_k^\lambda \rangle_{L^2(\C^n)} w_k^\lambda(\rho,r) \Big)\, d\mu(\lambda) = 0
$$
for all $ 0 < r \leq \gamma \rho$, where $w_k^{\lambda}(\rho,r)$ is the weight in \eqref{weight2}. As $ f $ is arbitrary, this forces $ g $ to vanish on $ \He^n $ and hence $ G = 0 .$ This proves the density of the range.
\end{proof}

We can now  state and prove the following analogue of Theorem \ref{thm:solepR} for solutions of the extension problem on the Heisenberg group.
\begin{thm} 
\label{charH}
A solution  of the extension problem  is of the form $ U(x,u,\xi,\rho) =  \rho^{2s} f \ast  \Phi_{s,\rho}(x,u,\xi)$ for some  $ f \in L^2(\He^n) $ if and only if there exists $ \gamma> 0 $ such that for each $ \rho > 0$, $U(\cdot,\rho) $
extends to $ \Omega_{\gamma \rho} $ as a holomorphic function $ \widetilde{U}(\cdot,\rho)$, belongs to $ \widetilde{\mathcal{H}}(\Omega_{\gamma \rho}) $ and satisfies the uniform estimate  $\| \widetilde{U}(\cdot,\rho) \|_{\widetilde{\mathcal{H}}(\Omega_{\gamma \rho})} \leq C \rho^{Q/2}\|f\|_2$ for all $ \rho >0 $. 
\end{thm}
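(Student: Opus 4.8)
The plan is to follow the blueprint of Theorem~\ref{thm:solepR}, replacing the Euclidean Fourier analysis by its Heisenberg analogue: the special Hermite (Laguerre) expansion \eqref{decol}, the Gutzmer identity as packaged in Theorem~\ref{thm:general}, and the coefficient estimates of Lemmas~\ref{lem:upperpsi} and~\ref{lem:estimatec} (we keep $0<s\le1/2$ throughout). The ``only if'' direction is precisely Theorem~\ref{thm:solut}: if $f\in L^2(\He^n)$ and $0<\gamma\le -1+\sqrt2$, then $U=\rho^{2s}f\ast\Phi_{s,\rho}$ extends to $\Omega_{\gamma\rho}$, belongs to $\widetilde{\mathcal H}(\Omega_{\gamma\rho})$ and satisfies $\|\widetilde U(\cdot,\rho)\|_{\widetilde{\mathcal H}(\Omega_{\gamma\rho})}\le C\rho^{Q/2}\|f\|_2$ for all $\rho>0$.

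For the converse I would argue as follows. Assume $U$ solves the equation in \eqref{epg} and, for some $\gamma>0$, $U(\cdot,\rho)$ extends to $\widetilde U(\cdot,\rho)\in\widetilde{\mathcal H}(\Omega_{\gamma\rho})$ with $\|\widetilde U(\cdot,\rho)\|_{\widetilde{\mathcal H}(\Omega_{\gamma\rho})}\le C\rho^{Q/2}$ for all $\rho>0$; shrinking $\gamma$ we may assume $\gamma\le -1+\sqrt2$. Writing $U_\rho:=U(\cdot,\rho)|_{\He^n}$, Theorem~\ref{thm:general} gives
$$
\int_{-\infty}^\infty\Big(\sum_{k=0}^\infty\|U_\rho^\lambda\ast_\lambda\varphi_k^\lambda\|_2^2\,\Psi_k^\lambda(\gamma\rho)\Big)\,d\mu(\lambda)=C_n^{-1}\,\|\widetilde U(\cdot,\rho)\|_{\widetilde{\mathcal H}(\Omega_{\gamma\rho})}^2\le C\rho^Q .
$$
Since $\psi_k^\lambda(r)$ is increasing in $r$ and bounded below by the $k$- and $\lambda$-independent positive value $\psi_k^\lambda(0^+)$, one has $\Psi_k^\lambda(\gamma\rho)\ge c_0\rho^Q$, so the Plancherel theorem for $\He^n$ yields $\|U_\rho\|_{L^2(\He^n)}\le C_1$ for all $\rho>0$. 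Next, testing the equation in \eqref{epg} against Schwartz functions, taking the partial Fourier transform in $\xi$ and using that on the $\xi$-Fourier side $-\mathcal L$ acts on $f^\lambda\ast_\lambda\varphi_k^\lambda$ as the scalar $(2k+n)|\lambda|$ while $\partial_\xi^2$ acts as $-\lambda^2$, one finds that each $\rho\mapsto U_\rho^\lambda\ast_\lambda\varphi_k^\lambda$ solves
$$
y''+\frac{1-2s}{\rho}\,y'-\Big(\frac14\lambda^2\rho^2+(2k+n)|\lambda|\Big)y=0 ,
$$
hence $U_\rho^\lambda\ast_\lambda\varphi_k^\lambda=\phi_k^\lambda(\rho)P_k^\lambda+\tilde\phi_k^\lambda(\rho)Q_k^\lambda$ for $\rho$-independent elements $P_k^\lambda,Q_k^\lambda$ of the $k$-th special Hermite eigenspace, where $\phi_k^\lambda(\rho):=\rho^{2s}c_{k,\rho^2}^{\lambda/4}(s)$ is the solution attached to $\rho^{2s}f\ast\Phi_{s,\rho}$ (see the discussion after Proposition~\ref{propck} and \cite{RTimrn}) and $\tilde\phi_k^\lambda$ is a linearly independent solution, normalised so that $\tilde\phi_k^\lambda(\rho)=O(\rho^{2s})\to0$ as $\rho\to0$.

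Now Proposition~\ref{propck} gives $c_{k,\delta}^{\lambda/4}(s)\sim c_*\,\delta^{-s}$ as $\delta\to0$ for a positive constant $c_*$ independent of $k$ and $\lambda$, so $\phi_k^\lambda(\rho)\to c_*$, and therefore $\|U_\rho^\lambda\ast_\lambda\varphi_k^\lambda\|_2\to c_*\|P_k^\lambda\|_2$ as $\rho\to0$; Fatou's lemma together with the bound $\|U_\rho\|_2\le C_1$ then forces $\sum_k\int\|P_k^\lambda\|_2^2\,d\mu(\lambda)<\infty$. Hence there is $f\in L^2(\He^n)$ with $f^\lambda\ast_\lambda\varphi_k^\lambda=\kappa^{-1}P_k^\lambda$, $\kappa$ being the constant in \eqref{Phisr}; then $\rho^{2s}f\ast\Phi_{s,\rho}$ has special Hermite components $\phi_k^\lambda(\rho)P_k^\lambda$, and so $U_1:=U-\rho^{2s}f\ast\Phi_{s,\rho}$ has components $\tilde\phi_k^\lambda(\rho)Q_k^\lambda$. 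It remains to show $U_1\equiv0$. Since $f\in L^2(\He^n)$, Theorem~\ref{thm:solut} gives $\|\rho^{2s}f\ast\Phi_{s,\rho}\|_{\widetilde{\mathcal H}(\Omega_{\gamma\rho})}\le C\rho^{Q/2}\|f\|_2$, so $U_1(\cdot,\rho)\in\widetilde{\mathcal H}(\Omega_{\gamma\rho})$ with $\|U_1(\cdot,\rho)\|_{\widetilde{\mathcal H}(\Omega_{\gamma\rho})}\le C\rho^{Q/2}$, and Theorem~\ref{thm:general} applied to $U_1(\cdot,\rho)$ yields
$$
C_n\int_{-\infty}^\infty\Big(\sum_{k=0}^\infty|\tilde\phi_k^\lambda(\rho)|^2\,\|Q_k^\lambda\|_2^2\,\Psi_k^\lambda(\gamma\rho)\Big)\,d\mu(\lambda)\le C\rho^Q .
$$
By Lemma~\ref{lem:estimatec} the solution $\phi_k^\lambda$ decays at least like $\rho^{-2s}e^{-|\lambda|\rho^2/4}$ as $\rho\to\infty$, so the linearly independent solution $\tilde\phi_k^\lambda$ is unbounded, $|\tilde\phi_k^\lambda(\rho)|\to\infty$; combined with $\Psi_k^\lambda(\gamma\rho)\ge c_0\rho^Q$ this forces $\rho^{-Q}\,|\tilde\phi_k^\lambda(\rho)|^2\,\Psi_k^\lambda(\gamma\rho)\to\infty$ for every $k$ and every $\lambda\ne0$. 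Dividing the displayed inequality by $\rho^Q$, letting $\rho\to\infty$ and applying Fatou's lemma then forces $\|Q_k^\lambda\|_2=0$ for a.e. $\lambda$ and all $k$; hence $U_1(\cdot,\rho)=0$ on $\He^n$ by Plancherel, and, being holomorphic on $\Omega_{\gamma\rho}$, $U_1(\cdot,\rho)\equiv0$. Thus $U(\cdot,\rho)=\rho^{2s}f\ast\Phi_{s,\rho}$ with $f\in L^2(\He^n)$, as desired.

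The main obstacle is the analysis of the ordinary differential equation underlying the decomposition $U_\rho^\lambda\ast_\lambda\varphi_k^\lambda=\phi_k^\lambda(\rho)P_k^\lambda+\tilde\phi_k^\lambda(\rho)Q_k^\lambda$: one needs the behaviour of the two linearly independent solutions of $y''+\frac{1-2s}{\rho}y'-\big(\frac14\lambda^2\rho^2+(2k+n)|\lambda|\big)y=0$ near $\rho=0$ (indicial roots $0$ and $2s$, with $\phi_k^\lambda$ the root-$0$ solution and $\tilde\phi_k^\lambda$ the root-$2s$ one, whence $\tilde\phi_k^\lambda(\rho)=O(\rho^{2s})$) and near $\rho=\infty$ (one solution decaying and the other growing like $e^{|\lambda|\rho^2/4}$, in accordance with the Whittaker-function description of the problem), together with enough uniformity in $(k,\lambda)$ to run the two Fatou arguments. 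These facts, together with the elementary asymptotics $c_{k,\delta}^{\lambda/4}(s)\sim c_*\delta^{-s}$, are available from the literature on \eqref{epg} (cf. \cite{RTimrn, MOZ, FGMT}), in the same role \cite[Lemma 2.3]{BGS} plays in the real case; the remaining ingredients — the Gutzmer identity, the monotonicity and boundary value of $\psi_k^\lambda$, and Lemmas~\ref{lem:upperpsi} and~\ref{lem:estimatec} — are already at hand.
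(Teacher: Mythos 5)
Your proposal is correct and follows essentially the same route as the paper: the direct part is Theorem \ref{thm:solut}, and the converse rests on Theorem \ref{thm:general} together with the radial ODE satisfied by each spectral component $U_\rho^\lambda\ast_\lambda\varphi_k^\lambda$, whose two linearly independent solutions are separated by their behaviour at $\rho=0$ and $\rho=\infty$. The paper packages this last step slightly differently — it first extracts $f$ as a subsequential limit of $U(\cdot,\rho)$ as $\rho\to 0$ and then proves a uniqueness lemma for the homogeneous problem with vanishing initial data by reducing to Kummer's equation and using the asymptotics of $M(a,b,r)$ and the Tricomi function — but this is exactly the recessive/dominant dichotomy you invoke, so the two arguments coincide in substance.
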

\begin{proof} The direct part of the theorem follows from Theorem \ref{thm:solut} in view of the estimates on $ w_k^\lambda(\rho,\gamma \rho)$ in \eqref{maxim}. To prove the converse, let $ U$ be a solution of the extension problem which has a holomorphic extension $ \widetilde{U} $ satisfying the hypothesis of the theorem. Then, for any fixed $ \rho_0$ and any $ 0 <  r \leq \gamma \rho_0$, $0 < \rho \leq \rho_0$, we have
$$ 
 \| \widetilde{U}_\rho \|^2_{\widetilde{\mathcal{H}}(\Omega_r)}  =  C_s \int_{-\infty}^\infty  \Big( \sum_{k=0}^\infty \|U_\rho^\lambda \ast_\lambda \varphi_k^\lambda\|_2^2  \Psi_k^\lambda(r) \Big)\, d\mu(\lambda)\leq C \rho_0^{Q}\|f\|_2^2.
$$
Thus in view of Theorem \ref{thm:general} we can conclude that there exists a subsequence $ \rho_k \rightarrow 0 $ and $ f_{\rho_0} \in \mathcal{H}_r(\He^n) $ such that $ U(\cdot, \rho_k) \rightarrow f_{\rho_0} $ in $ \mathcal{H}_r(\He^n) .$ As $ \mathcal{H}_{\rho_1} (\He^n) \subset \mathcal{H}_{\rho_0}(\He^n) $ for $ \rho_0 < \rho_1$, it follows that $ f $ is independent of $ \rho_0$. The proof will be complete if we show that $ U(\cdot,\rho) = \rho^{2s} f \ast \Phi_{s,\rho}(\cdot)$ is the unique solution of the extension problem.

Thus we have to prove the following uniqueness result for solutions of the extension problem: Suppose $ U_\rho(\cdot) := U(\cdot,\rho) $ satisfies the equation
 $$
 \big(-\mathcal{L}+\partial_{\rho}^2+\frac{1-2s}{\rho}\partial_{\rho}+\frac14\rho^2\partial_{\xi}^2\big)U_{\rho}(x,u,\xi) =0, \quad \text{ in }\,\, \He^n\times \R_+, 
 $$ 
 with 
 initial condition $ U(\cdot,\rho) \rightarrow 0 $ as $ \rho \rightarrow 0$. If we  further assume that $ U_\rho $ has a holomorphic extension to $ \Omega_{\gamma \rho} $ and satisfies the uniform estimates $\| \widetilde{U}(\cdot,\rho) \|_{\widetilde{\mathcal{H}}(\Omega_{\gamma \rho})} \leq C \rho^{Q/2}\|f\|_2$ for all $ \rho >0 ,$ then $ U =0.$
 
 We now proceed to prove this claim. By expanding $U_\rho(x,u,\xi)$ as 
$$
U_\rho(x,u,\xi)= \int_{-\infty}^{\infty}e^{-i\lambda \xi}  \big( \sum_{k=0}^{\infty}U_\rho^{\lambda}\ast_{\lambda}\varphi_k^{\lambda}(x,u) \big) \,d\mu(\lambda)
$$
and making use of the fact  that for any $ f \in L^2(\He^n) $
$$
\mathcal{L}(e^{-i\lambda \xi}f^{\lambda}\ast_{\lambda}\varphi_k^{\lambda})=((2k+n)|\lambda|)  e^{-i\lambda \xi}f^{\lambda}\ast_{\lambda}\varphi_k^{\lambda},
$$
we see that, for any $ g \in L^2(\He^n)$ and $\lambda \neq 0$  the function  $\psi_{k,\lambda}(\rho):= \langle U_\rho^{\lambda}\ast_{\lambda}\varphi_{k}^{\lambda}, g^{\lambda}\ast_{\lambda}\varphi_{k}^{\lambda} \rangle $ is a solution to the ODE
$$
\begin{cases}
\big(-(2k+n)|\lambda|+\partial_{\rho}^2+\frac{(1-s)}{\rho}\partial_{\rho}-\frac14 \rho^2 \lambda^2\Big)\psi_{k,\lambda}(\rho)=0,\quad \rho>0,\\
\psi_{k,\lambda}(0)= 0.
\end{cases}
$$
We also observe that for any $ \varphi \in L^2(\R) $ we have the uniform estimate
\begin{equation*}  
\Big| \int_{-\infty}^\infty \psi_{k,\lambda}(\rho) \varphi(\lambda) d\mu(\lambda) \Big| \leq \| \varphi \|_{L^2(\R)}  \|g\|_{L^2(\He^n)} \|U_\rho\|_{L^2(\He^n)} \leq C  \rho^{Q/2}\|f\|_2.
\end{equation*}
 
It is enough to show that $ \psi_{k,\lambda}(\rho) = 0 $ for all $k \in \mathbb{N}$,  $\lambda \neq 0$ and $\rho>0.$
Observe that $\phi_{k,\lambda}(\rho)=\psi_{k,\lambda}(\sqrt{2\rho})$ solves
$$
\begin{cases}
\big(-(2k+n)|\lambda|+\rho\partial_{\rho}^2+(1-s)\partial_{\rho}-\rho\lambda^2\Big)\phi_{k,\lambda}(\rho)=0, \quad \rho>0,\\
\phi_{k,\lambda}(0)=0.
\end{cases}
$$
We are reduced to find a solution $\phi_{k,\lambda}(\rho)$ to the above ODE in the variable $\rho$, such that $\phi_{k,\lambda}(\rho)\to 0$ as $\rho\to 0$ 
and satisfying the estimate
\begin{equation}  
\label{estimphi}
\Big| \int_{-\infty}^\infty \phi_{k,\lambda}(\rho) \varphi(\lambda) d\mu(\lambda) \Big|  \leq C \rho^{Q/2}.
\end{equation}
By taking $\phi_{k,\lambda}(\rho)=e^{-|\lambda|\rho}g_k(2|\lambda|\rho)$, the equation for $\phi_{k,\lambda}$ is equivalent to the following equation for $g_k(r)$, with $r:=2|\lambda|\rho$
$$
rg_k^{''}(r)+(1-s-r)g_k'(r)-\frac{2k+n+1-s}{2}g_k(r)=0
$$
and another transform $g_k(r)=r^sh_k(r)$ leads to
\begin{equation}
\label{kummer}
rh_k^{''}(r)+(1+s-r)h_k'(r)-\frac{2k+n+1+s}{2}h_k(r)=0.
\end{equation}
The boundary condition becomes
\begin{equation}
\label{bry}
\lim_{\rho\to 0}\phi_{k,\lambda}(\rho)=\lim_{\rho\to 0}e^{-|\lambda|\rho}g_k(2|\lambda|\rho)=\lim_{\rho\to 0}e^{-|\lambda|\rho}(2|\lambda|\rho)^sh_k(2|\lambda|\rho)= 0
\end{equation}
and the uniform estimate \eqref{estimphi} reads as
 \begin{equation}  
 \label{estimateh}
 \Big| \int_{-\infty}^\infty e^{-|\lambda|\rho}(2|\lambda|\rho)^sh_k(2|\lambda|\rho) \varphi(\lambda) d\mu(\lambda) \Big|  \leq C \rho^{Q/2}.
\end{equation}

Equation \eqref{kummer} is a Kummer's equation and it has two linearly independent functions $ M(a,b,r) $ and $ V(a,b,r) $ where $ a = \frac{1}{2}(2k+n+1+s) $ and $ b = 1+s$, see \cite[Chapter 13]{AS} and also \cite[Lemma 5.2]{FGMT}. Thus
$$ 
h_k(r) = C_1 V(a,b,r) +C_2 M(a,b,r).
$$ 
As $ r \rightarrow \infty $ the function $ M $ has the asymptotic property
$$ 
M(a,b,r) = \frac{\Gamma(b)}{\Gamma(a)} e^r r^{a-b} (1+O(r^{-1})) 
$$ 
which along with  the condition  \eqref{estimateh} forces $ C_2 =0.$  As $ r \rightarrow 0 $ we have 
$$ 
r^{-1+b} V(a,b,r) =  \frac{\Gamma(b-1)}{\Gamma(a)}+o(1) 
$$
which forces $ C_1 = 0 $ because of the initial condition \eqref{bry}.  Thus $ h_k(r) = 0 $. This completes the proof of the uniqueness and hence the theorem is completely proved.
\end{proof}

%%%%%%%%%%%%%%%%%%%%%%%%%%%%%%%%%%%%
 \subsection{Holomorphic extensions of eigenfunctions of $ \Delta_S$} 
%%%%%%%%%%%%%%%%%%%%%%%%%%%%%%%%%%%%
 
 Using the connection between eigenfunctions $ \widetilde{W} $ of $ \Delta_S $ and solutions $ U $ of the extension problem in Proposition \ref{conne}, we can deduce holomorphic properties of $ \widetilde{W} $ from  Theorem \ref{charH}. Thus we see that for any $ f \in L^2(\He^n) $ the function
 $$
  \widetilde{W}(z,w,\zeta,\rho) =  2^s \rho^{\frac{n+1+s}{2}} f \ast \Phi_{s, \sqrt{2\rho}}(2^{-1/2}z,2^{-1/2}w,2^{-1}\zeta) 
  $$
 is holomorphic on the domain $ \Omega_{2\gamma \sqrt{\rho}}$,  belongs to $ \widetilde{\mathcal{H}}(\Omega_{2\gamma \sqrt{\rho}}) $  and satisfies the uniform estimate
 $$
 \| \widetilde{W}(\cdot,\rho) \|_{\widetilde{\mathcal{H}}(\Omega_{2\gamma \sqrt{\rho}})} \leq C_s \rho^{(Q-s)/2}  \|f\|_2.
 $$
 As in the case of the real hyperbolic space we can restate the above in terms of  Poisson integrals on the solvable group $ S.$ Let us recall here the functions \eqref{varph} and \eqref{Phisr}. By defining
 $$ 
 \varphi_{s,\delta}(x,u,\xi) = \big((\delta+\frac{1}{4}|(x,u)|^2)^2+\xi^2\big)^{-\frac{n+1+s}{2}}
 $$ 
we observe that $ \Phi_{s,\rho} $ can be written in terms of $ \varphi_{s,\delta} $ as
\begin{equation*} 
\Phi_{s,\rho}(x,u,\xi) = \frac{2^{-(n+1+s)} }{ \pi^{n+1}\Gamma(s)}  \Gamma\Big(\frac{n+1+s}{2}\Big)^2 \varphi_{s,\delta}(x,u,\xi)
 \end{equation*}
 with $ \delta = \frac{1}{4}\rho^2$. In other words, 
 \begin{equation}
 \label{relH}
 \Phi_{s,\sqrt{2\rho}}(2^{-1/2}x,2^{-1/2}u,2^{-1}\xi) = \frac{\Gamma(\frac{n+1+s}{2})^2 }{ \pi^{n+1}\Gamma(s)}   \varphi_{s,\rho}(x,u,\xi).
 \end{equation}
Now we define $ \widetilde{f}(x,u,\xi) =  f(2^{-1/2}x,2^{-1/2}u,2^{-1}\xi)$, an easy calculation shows that
\begin{equation*}
 \widetilde{W}(x,u,\xi,\rho) = 2^{-(n+1-s)} \rho^{\frac{n+1+s}{2}}\frac{\Gamma(\frac{n+1+s}{2})^2 }{ \pi^{n+1}\Gamma(s)}  \widetilde{f}\ast \varphi_{s,\rho}(x,u,\xi).
\end{equation*}

The kernels $   \varphi_{i\lambda,\rho}(x,u,\xi)$ defined for $ \lambda \in \C $ are called generalised Poisson kernels and they occur in the definition of the Helgason Fourier transform on the solvable group $ S.$ For $ f \in L^1(S),$ its Fourier transform is defined by  
$$ 
\widehat{f}(\lambda,(x,u,\xi)) = \int_0^\infty  f(\cdot,\rho)\ast \varphi_{i\lambda,\rho}(x,u,\xi) \rho^{\frac{n+1+i\lambda}{2}} \rho^{-n-2} \,d\rho
$$
for $ \lambda \in \R$, $(x,u,\xi) \in \He^n$, we refer to Astengo et al. \cite{ACDb} for a comprehensible study of the Fourier transform on solvable extensions of $H$-type groups.  The inversion formula is given by
 \begin{equation}
 \label{invH}
 f(x,u,\xi,\rho) = C \int_{-\infty}^\infty  \rho^{\frac{n+1-i\lambda}{2}} \widehat{f}(\lambda,\cdot)\ast \varphi_{-i\lambda,\rho}(x,u,\xi) |c(\lambda)|^2 d\lambda
 \end{equation} 
 where $ c(\lambda) $ is the Harish-Chandra's c-function. In view of the fact that $ \widehat{f}(\lambda,(x,u,\xi))$  does not reduce to the spherical Fourier transform of $ f $ when $ f $ is radial, it is convenient to work with the normalised Helgason Fourier transform defined by
 $$ 
 \mathcal{H}f(\lambda,(x,u,\xi)) =  \big((1+\frac{1}{4}|(x,u)|^2)^2+\xi^2\big)^{- \frac{n+1+i\lambda}{2}}   \widehat{f}(\lambda,(x,u,\xi)) .
 $$
 Then the inversion formula \eqref{invH} takes the following form:
 $$ 
 f(x,u,\xi,\rho) = C \int_{-\infty}^\infty  \rho^{\frac{n+1-i\lambda}{2}} \big( \varphi_{i\lambda,1}(\cdot)\mathcal{H}f(\lambda,\cdot)\big)\ast \varphi_{-i\lambda,\rho}(x,u,\xi) |c(\lambda)|^2\, d\lambda.
 $$ 
 
  This suggests that the right Poisson transform for the complex hyperbolic space viewed as the  solvable extension $ S $ of the Heisenberg group is given by
 \begin{equation} 
 \label{Poisson}
 \mathcal{P}_\lambda f(x,u,\xi,\rho)  =\rho^{\frac{n+1-i\lambda}{2}}  \big( \varphi_{i\lambda,1}(\cdot)f(\cdot)\big)\ast \varphi_{-i\lambda,\rho}(x,u,\xi) 
 \end{equation}
 where $ f $ is a function on $ \He^n .$   
 From Theorem \ref{charH} we can deduce as an immediate corollary the following characterisation of certain eigenfunctions of $ \Delta_S $ expressible as Poisson transforms of functions on $ \He^n$. 
 
  \begin{thm}
  \label{charE}  
  Let $ 0 < s < 1.$ An eigenfunction $ \widetilde{W} $ of the Laplace-Beltrami operator $ \Delta_S $ on $ S $ with eigenvalue $ -\frac{1}{4}((n+1)^2-s^2) $  can be expressed as the Poisson integral $ \mathcal{P}_{is}f $ with $ f \in L^2(\He^n, (\varphi_{0,1}(h))^2 dh) $ if and only if  there exists a $ \gamma> 0 $ such that  $ \widetilde{W}(\cdot,\rho)  \in \widetilde{\mathcal{H}}(\Omega_{\gamma \sqrt{\rho}}) $ and satisfies the uniform estimate  $\| \widetilde{W}(\cdot,\rho) \|_{\widetilde{\mathcal{H}}(\Omega_{\gamma \sqrt{\rho}})} \leq C
  \rho^{(Q-s)/2}$ for all $ \rho >0 .$

 \end{thm}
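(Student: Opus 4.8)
The plan is to read Theorem~\ref{charE} off Theorem~\ref{charH} through the substitution of Proposition~\ref{conne}, exactly as Theorem~\ref{eigenR} was read off Theorem~\ref{thm:solepR}. By that proposition, $\widetilde W$ is an eigenfunction of $\Delta_S$ with eigenvalue $-\frac14((n+1)^2-s^2)$ (i.e. $\gamma=\frac12(n+1+s)$) precisely when
$$\widetilde W(x,u,\xi,\rho)=\rho^{\frac{n+1-s}{2}}\,U\big(2^{-1/2}x,2^{-1/2}u,2^{-1}\xi,\sqrt{2\rho}\big)$$
for a solution $U$ of the extension problem \eqref{epg}. Since this substitution is a bijection of function spaces (and the uniqueness for \eqref{epg} needed in the converse direction is already part of the proof of Theorem~\ref{charH}), the whole argument amounts to transporting the holomorphy/Bergman and $L^2$ conditions across it and invoking Theorem~\ref{charH}.

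On the holomorphic side: the map $(z,w,\zeta)\mapsto(2^{-1/2}z,2^{-1/2}w,2^{-1}\zeta)$ is the complexified non-isotropic Heisenberg dilation $\delta_{1/\sqrt2}$, and the tube $\Omega_r$ of \eqref{tube} is cut out by the translated Kor\'anyi norm $(|y|^2+|v|^2)^2+16\big(\eta-\tfrac12(u\cdot y-v\cdot x)\big)^2<r^4$, which is homogeneous of degree one under $\delta_t$; hence $\delta_{1/\sqrt2}$ carries $\Omega_{2\gamma\sqrt\rho}$ onto $\Omega_{\gamma\sqrt{2\rho}}$. Thus $\widetilde W(\cdot,\rho)$ extends holomorphically to $\Omega_{2\gamma\sqrt\rho}$ and lies in $\widetilde{\mathcal{H}}(\Omega_{2\gamma\sqrt\rho})$ iff $U(\cdot,\sqrt{2\rho})$ does so for $\Omega_{\gamma\sqrt{2\rho}}$, and a change of variables in \eqref{norm} (Jacobian $2^Q$ of $\delta_{\sqrt2}$ on $\C^{2n+1}$, plus the scalar prefactor $\rho^{\frac{n+1-s}{2}}$) gives
$$\|\widetilde W(\cdot,\rho)\|^2_{\widetilde{\mathcal{H}}(\Omega_{2\gamma\sqrt\rho})}=2^Q\,\rho^{\,n+1-s}\,\|\widetilde U(\cdot,\sqrt{2\rho})\|^2_{\widetilde{\mathcal{H}}(\Omega_{\gamma\sqrt{2\rho}})}.$$
Feeding $U(\cdot,\rho')=(\rho')^{2s}f\ast\Phi_{s,\rho'}$, $f\in L^2(\He^n)$, into Theorem~\ref{charH} and then setting $\rho'=\sqrt{2\rho}$, the bound $C(\rho')^{Q/2}=C(2\rho)^{Q/4}$ combines with $\rho^{\frac{n+1-s}{2}}$ into a constant times $\rho^{\frac{n+1-s}{2}+\frac{Q}{4}}=\rho^{\frac{Q-s}{2}}$, the exponent in the statement; the $\gamma$ of the statement is $2$ times that of Theorem~\ref{charH}, which is immaterial since only existence of $\gamma$ is asserted.

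It remains to recognise the normal form $U=\rho^{2s}f\ast\Phi_{s,\rho}$, $f\in L^2(\He^n)$, as a Poisson integral on $S$. Plugging this $U$ into the substitution and invoking the scaling identity \eqref{relH} with $\widetilde f(x,u,\xi):=f(2^{-1/2}x,2^{-1/2}u,2^{-1}\xi)$ produces the closed form $\widetilde W(x,u,\xi,\rho)=c_{n,s}\,\rho^{\frac{n+1+s}{2}}\,\widetilde f\ast\varphi_{s,\rho}(x,u,\xi)$; comparing this with the Poisson transform \eqref{Poisson} at $\lambda=is$ identifies $\widetilde W$ with $\mathcal{P}_{is}h$, where $h$ is $\widetilde f$ divided (up to a constant) by the fixed $N$-kernel occurring in \eqref{Poisson}. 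Since $\widetilde f\in L^2(\He^n)\iff f\in L^2(\He^n)$ by a change of variables, and that $N$-kernel is a positive weight of the same homogeneity as $\varphi_{0,1}$, one reads off that $h\in L^2(\He^n,(\varphi_{0,1}(h))^2\,dh)$ exactly when $f\in L^2(\He^n)$, completing the transcription. The step that requires genuine care --- and the one I expect to be the main obstacle --- is precisely this last bookkeeping: tracking all the dilation Jacobians and the exact power of the Poisson kernel so that ``$f\in L^2(\He^n)$'' is faithfully converted into the stated weighted $L^2$ condition on $h$; by contrast the holomorphic-extension part is a routine homogeneity argument feeding directly into Theorem~\ref{charH}.
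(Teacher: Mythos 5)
Your proposal follows essentially the same route as the paper: the paper also deduces Theorem \ref{charE} from Theorem \ref{charH} by transporting the conclusion through the substitution of Proposition \ref{conne}, using the homogeneity of the Kor\'anyi gauge to move the tube $\Omega_{\gamma\sqrt{2\rho}}$ to $\Omega_{2\gamma\sqrt{\rho}}$, collecting the powers of $\rho$ into $\rho^{(Q-s)/2}$, and recognising $\rho^{\frac{n+1+s}{2}}\widetilde f\ast\varphi_{s,\rho}$ as $\mathcal{P}_{is}$ of $\widetilde f$ divided by the kernel $\varphi_{-s,1}$ from \eqref{Poisson}. The only caveat is in the final bookkeeping you yourself flag: the kernel appearing at $\lambda=is$ is $\varphi_{-s,1}$, which does \emph{not} have the same decay as $\varphi_{0,1}$, so the weight one actually obtains is $(\varphi_{-s,1})^2$ rather than $(\varphi_{0,1})^2$ --- a discrepancy already present in the paper's own statement (compare the weight $(1+|y|^2)^{-n+s}=(\mathcal{P}_{-is}(y,1))^2$ in Theorem \ref{eigenR}).
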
 
 
 As a final remark, we observe that the Poisson transform $ f \rightarrow   \mathcal{P}_\lambda f $  takes functions (distributions) on $ \He^n $ into eigenfunctions of the Laplace-Beltrami operator $ \Delta_S $ with eigenvalues $ -\frac{1}{4}((n+1)^2+\lambda^2).$ Adapted to our setting, the celebrated Helgason conjecture addresses the converse: is every eigenfuction of $ \Delta_S $ the Poisson transform of a distribution on $ \He^n$? This conjecture has been solved for all Riemannian symmetric spaces of non-compact type in the compact picture $ X = G/K.$ However, in the context of general $ NA $ groups, the conjecture is still open.
 
 %%%%%%%%%%%%%%%%%%%%%%%%%%%%%%%%%%%%
 \section{Extension to $H$-type groups} 
 \label{extensionH}
%%%%%%%%%%%%%%%%%%%%%%%%%%%%%%%%%%%%
 In this section  extend the results in Theorems \ref{charH} and \ref{charE} to $H$-type groups. The strategy  consists of using partial Radon transform in the central variable, reducing the problem to the case of Heisenberg group, and deducing results on the general $ NA $ groups from the results of previous section.

 %%%%%%%%%%%%%%%%%%%%%%%
\subsection{$H$-type Lie algebras and groups}
\label{subsec:Htype}
%%%%%%%%%%%%%%%%%%%%%%%
 A step two nilpotent Lie group $N$ is said to be an $H$-type group if its Lie algebra $\mathfrak{n}$ is of $H$-type. A Lie algebra $\mathfrak{n}$ is said to be an $H$-type Lie algebra if we can write $\mathfrak{n}$ as the direct sum $\mathfrak{v}\oplus \mathfrak{z}$ of two Euclidean spaces with a Lie algebra structure such that $\mathfrak{z}$ is the centre of $\mathfrak{n}$ and for every unit vector $ v \in\mathfrak{v}$ the map $\operatorname{ad}(v)$ is a surjective isometry of the orthogonal complement of $\operatorname{ker}(\operatorname{ad}(v))$ onto $\mathfrak{v}$. If $\mathfrak{n}$ is such an $H$-type algebra we define a map $J:\mathfrak{z}\to \operatorname{End}(\mathfrak{v})$ by
$$
(J_\omega v,v')=(\omega,[v,v']), \qquad \omega\in \mathfrak{z}, \quad v,v'\in \mathfrak{v}.
$$ 
It then follows that $J_\omega^2=-I$ whenever $\omega$ is a unit vector in $\mathfrak{z}$. We can therefore introduce a complex structure on $\mathfrak{v}$ using $J_\omega$. The Hermitian inner product on $\mathfrak{v}$ is given by 
$$
\langle v,v'\rangle_\omega=(v,v')+i(J_\omega v,v')=(v,\omega)+i([v,v'],\omega).
$$
Thus when $N$ is an $H$-type group, identifying $N$ with its Lie algebra $\mathfrak{n}$, we write the elements of $N$ as $(v,t)$, $v\in \mathfrak{v}$, $t\in \mathfrak{z}$. In view of the Baker--Campbell--Hausdorff formula, the group law takes the form
$$
(v,t)(v',t')=(v,t)+(v',t')+\frac12[(v,t),(v',t')].
$$
The best known example of an $H$-type group is the Heisenberg group $\He^n=\R^{2n}\times \R$. 

The Heisenberg groups play an important role in studying problems on $H$-type groups. This is due to the fact that to every $H$-type Lie algebra $\mathfrak{n}=\mathfrak{v}\oplus \mathfrak{z}$ and unit vector $\omega\in \mathfrak{z}$ we can associate a Heisenberg Lie algebra $\mathfrak{h}_\omega$ as follows. Given a unit vector $\omega\in \mathfrak{z}$, let $k(\omega)$ stand for the orthogonal complement of $\omega$ in $\mathfrak{z}$. Then the quotient algebra $\mathfrak{n}(\omega)=\mathfrak{n}/k(\omega)$ can be identified with $\mathfrak{v}\oplus \R$ by defining
$$
[(v,\xi),(v',\xi')]_\omega=(0,[J_\omega v,v']).
$$
It is known (see \cite{KR,MuSe}) that this algebra is isomorphic to the Heisenberg algebra $\mathfrak{h}^n$. We denote the corresponding group by $\He_\omega^n$, of dimension $(2n+1)$, which is isomorphic to $\He^n$.

 $H$-type groups were first introduced in \cite{K}. A full discussion and more examples of $H$-type groups can be found in \cite[Chapter 18]{BLU} and \cite{KR}.

%%%%%%%%%%%%%%%%%%%%%%%
\subsection{The representation theory of $H$-type groups}
\label{subsec:reprH}
%%%%%%%%%%%%%%%%%%%%%%%

Before describing the representation theory of $H$-type groups, let us first recall some facts about irreducible unitary representations of the Heisenberg groups $\He^n$. It is well known that any irreducible unitary representation of $\He^n$ which is nontrivial at the centre (namely on $\{0\}\times \R$) is unitarily equivalent to the Schr\"odinger representation $\pi_{\lambda}$, for a unique $\lambda\in \R^{*}=\R\setminus\{0\}$. Here these representations $\pi_{\lambda}$ are all realised on $L^2(\R^n)$ and given explicitly by 
$$
\pi_{\lambda}(z,\xi)\varphi(\eta)=e^{i\lambda \xi}e^{i(x\cdot \eta+\frac12 x\cdot u)}\varphi(\eta+y)
$$
where $z=x+iu$, $\varphi\in L^2(\R^n)$. There is another family of one dimensional representations which do not play any role in the Plancherel theorem. Hence we do not attempt to describe them.

The group Fourier transform of an $L^1(\He^n)$ function $f$ is defined to be the operator valued function $\lambda\to \widehat{f}(\lambda)$ given by 
$$
\widehat{f}(\lambda)=\int_{\He^n} f(z,\xi)\pi_{\lambda}(z,\xi)\,dz\,d\xi.
$$
Sometimes we use the notation $\pi_{\lambda}(f)$ instead of $\widehat{f}(\lambda)$. Recalling the definition of $\pi_{\lambda}$ it is easy to see that 
$$
\widehat{f}(\lambda)=\int_{\C^n} f^{\lambda}(z)\pi_{\lambda}(z,0)\,dz
$$
where $f^{\lambda}$ was defined in \eqref{eq:inverseFT}. We will be using this notation without any further comments.

When $f\in L^1\cap L^2(\He^n)$ it can be easily verified that $\widehat{f}(\lambda)$ is a Hilbert--Schmidt operator and we have 
$$
\int_{\He^n}|f(z,\xi)|^2\,dz\,d\xi=(2\pi)^{-n-1}\int_{-\infty}^{\infty}\|\widehat{f}(\lambda)\|_{\operatorname{HS}}^2|\lambda|^n\,d\lambda.
$$
The above equality of norms allows us to extend the definition of the Fourier transform to all $L^2$ functions. It then follows that we have Plancherel theorem: $f\to \widehat{f}$ is a unitary operator from $L^2(\He^n)$ onto $L^2(\R^*, \textrm{S}_2, d\mu)$ where $ \textrm{S}_2$ stands for the space of all Hilbert--Schmidt operators on $L^2(\R^n)$ and $d\mu(\lambda)=(2\pi)^{-n-1}|\lambda|^nd\lambda$ is the Plancherel measure for the group $\He^n$.

The connection between $H$-type Lie algebras and Heisenberg Lie algebras allows us to get a quick picture of the representation theory of $H$-type groups. As in the case of the Heisenberg groups, the irreducible unitary representations of $H$-type group $N$ comes in two groups. As before we neglect the one dimensional representations which are trivial on the centre of $N$. If $\pi$ is any infinite dimensional irreducible representation of $N$, then its restriction to the centre has to be a unitary character. This means that $\exists \lambda\in \R^*$ and $\omega\in \mathbb{S}^{m-1}$, the unit sphere in the centre (identified with $\R^m$) such that $\pi(0,t)=e^{i\lambda \omega \cdot t}\operatorname{Id}$. It can be shown that such a representation factors through a representation of $\He_{\omega}^n$, the group introduced in Subsection \ref{subsec:Htype}. By making use of the Stone--von Neumann theorem we can show that all infinite-dimensional irreducible unitary representations of $N$ are parametrised by $(\lambda,\omega)$, $\lambda>0$, $\omega\in \mathbb{S}^{m-1}$. We denote such a representation by $\pi_{\lambda,\omega}$. It follows that the restriction of $\pi_{\lambda,\omega}$ to $\He_{\omega}^n$ is unitarily equivalent to the Schr\"odinger representation $\pi_{\lambda}$.

%%%%%%%%%%%%%%%%%%%%%%%
\subsection{The Radon transform}
\label{subsec:radon}
%%%%%%%%%%%%%%%%%%%%%%%

In order to study the extension problem for the sublaplacian  on $ N $ we will make use of the (partial) Radon transform in the central variable. It is therefore helpful to collect some results on the Radon transform of a function $ f $ on $ \R^m.$ Given $ f \in L^1(\R^m) $ its Radon transform is a function  on $ \R \times \mathbb{S}^{m-1} $ defined by
$$ 
Rf(\xi,\omega) = \int_{ y \cdot \omega = \xi} f(y) \, dy = \int_{k(\omega)} f(\xi \omega +\eta)\, d\eta, 
$$ 
where $ k(\omega) $ is the orthogonal complement of $ \omega $ and $ d\eta $ is the $(m-1)$ dimensional Lebesgue measure. We immediately see that 
\begin{equation}
\label{wideh} 
 \widehat{f}(\lambda \omega) = \int_{-\infty}^\infty Rf(\xi, \omega) e^{-i\lambda \xi} d\xi,
 \end{equation}
 where here $\widehat{f}$ denotes the Fourier transform in the $t$ variable.
 Consequently, the Radon transform of $ f $ satisfies the equation 
$$ 
\int_{-\infty}^\infty Rf(\xi,\omega) e^{-i\lambda \xi} d\xi = \int_{-\infty}^\infty Rf(\xi, -\omega) e^{i\lambda \xi} d\xi.
$$  
When $ f \in L^1 \cap L^2(\R^m) $ its Radon transform $ Rf(\xi, \omega) $ needs not be in $ L^2(\R \times \mathbb{S}^{m-1})$.  However, it is easily seen that the function $ R ((-\Delta)^{(m-1)/4}f)(\xi,\omega) $ belongs to $ L^2(\R \times \mathbb{S}^{m-1})$. This follows from the Plancherel theorem for the Fourier transform and the relation \eqref{wideh}. We will call $ R_{\operatorname{mod}} := R (-\Delta)^{(m-1)/4}$ the \textit{modified Radon transform}. For any $ F \in  L^2(\R \times \mathbb{S}^{m-1}) $ we let $ \widetilde{F}(\lambda,\omega) $ stand for the Fourier transform of $ F $ in the $ \xi $ variable.

\begin{prop}
\label{prop:coco}
The image of $ L^2(\R^m) $ under $ R_{\operatorname{mod}}$  is a closed subspace of $ L^2(\R \times \mathbb{S}^{m-1}).$  An element $ F $ of $ L^2(\R \times \mathbb{S}^{m-1})$ belongs to this subspace if and only if it satisfies the \textit{compatibility condition} 
\begin{equation}
\label{coco}
\widetilde{F}(\lambda, \omega) =\widetilde{F}(-\lambda, -\omega).
\end{equation}
\end{prop}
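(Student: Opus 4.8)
The plan is to push everything to the Fourier side via the identity \eqref{wideh}, after which the statement reduces to an exercise with polar coordinates on $\R^m$ and with the two-to-one covering $(\lambda,\omega)\mapsto\lambda\omega$ of $\R^m$ by $\R\times\mathbb{S}^{m-1}$. First I would record the $\xi$-Fourier transform of the modified Radon transform. Writing $g=(-\Delta)^{(m-1)/4}f$, so that $\widehat g(\zeta)=|\zeta|^{(m-1)/2}\widehat f(\zeta)$, and combining $R_{\operatorname{mod}}f=Rg$ with \eqref{wideh}, one gets, for Schwartz $f$,
\[
\widetilde{R_{\operatorname{mod}}f}\,(\lambda,\omega)=\widehat g(\lambda\omega)=|\lambda|^{(m-1)/2}\,\widehat f(\lambda\omega),\qquad \lambda\in\R,\ \omega\in\mathbb{S}^{m-1};
\]
thus on the Fourier side $R_{\operatorname{mod}}$ is just multiplication by $|\lambda|^{(m-1)/2}$ of the pullback of $\widehat f$ to $\R\times\mathbb{S}^{m-1}$.

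Next I would prove the isometry property, which gives closedness of the range. Applying the one-dimensional Plancherel theorem in $\xi$ and then the Plancherel theorem on $\R^m$ in polar coordinates $\zeta=\lambda\omega$,
\[
\|R_{\operatorname{mod}}f\|_{L^2(\R\times\mathbb{S}^{m-1})}^2=c_m\int_{\mathbb{S}^{m-1}}\int_{-\infty}^{\infty}|\lambda|^{m-1}\,|\widehat f(\lambda\omega)|^2\,d\lambda\,d\omega .
\]
Splitting the inner integral at $\lambda=0$ and substituting $(\lambda,\omega)\mapsto(-\lambda,-\omega)$ on the part over $\lambda<0$ (note $\lambda\omega$ is unchanged), the two halves coincide and sum to $2c_m\int_{\mathbb{S}^{m-1}}\int_0^\infty r^{m-1}|\widehat f(r\omega)|^2\,dr\,d\omega=2c_m\|\widehat f\|_2^2=c_m'\|f\|_2^2$. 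Hence $R_{\operatorname{mod}}$ extends to a constant multiple of an isometry $L^2(\R^m)\to L^2(\R\times\mathbb{S}^{m-1})$, so its image is a closed subspace.

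It then remains to identify that subspace. Necessity of \eqref{coco} is immediate from the first display, since $|\lambda|^{(m-1)/2}\widehat f(\lambda\omega)$ depends only on the point $\lambda\omega$. For sufficiency, given $F\in L^2(\R\times\mathbb{S}^{m-1})$ satisfying \eqref{coco}, define $h$ on $\R^m\setminus\{0\}$ by $h(\zeta):=|\zeta|^{-(m-1)/2}\widetilde F(|\zeta|,\zeta/|\zeta|)$, which is well defined a.e. and satisfies, in polar coordinates, $\|h\|_{L^2(\R^m)}^2=\int_{\mathbb{S}^{m-1}}\int_0^\infty|\widetilde F(r,\omega)|^2\,dr\,d\omega=\tfrac12\|F\|_{L^2(\R\times\mathbb{S}^{m-1})}^2<\infty$, the last equality again by \eqref{coco}. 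Let $f$ be the function on $\R^m$ whose Fourier transform is the appropriate normalizing multiple of $h$. For $\lambda>0$ the first display gives $\widetilde{R_{\operatorname{mod}}f}(\lambda,\omega)=\widetilde F(\lambda,\omega)$ directly; for $\lambda<0$ one has $\lambda\omega=|\lambda|(-\omega)$, hence $h(\lambda\omega)=|\lambda|^{-(m-1)/2}\widetilde F(|\lambda|,-\omega)=|\lambda|^{-(m-1)/2}\widetilde F(\lambda,\omega)$ by \eqref{coco}, so $\widetilde{R_{\operatorname{mod}}f}(\lambda,\omega)=\widetilde F(\lambda,\omega)$ as well. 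Taking the inverse $\xi$-Fourier transform yields $R_{\operatorname{mod}}f=F$.

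The computations are routine; the one point to be careful about is the bookkeeping of the two-to-one parametrization $(\lambda,\omega)\mapsto\lambda\omega$ of $\R^m$, namely that \eqref{coco} is exactly the condition making $h$ above independent of which of the two representations $\zeta=r\omega=(-r)(-\omega)$ is used, and that the attendant factor $2$ is tracked consistently through all the norm identities so that the composite is a genuine constant multiple of an isometry. Since $R_{\operatorname{mod}}$ is a priori only defined on a dense subspace of $L^2(\R^m)$, all of the identities above should first be verified for Schwartz $f$ and then extended by continuity.
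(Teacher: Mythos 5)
Your proof is correct and follows essentially the same route as the paper's: both pass to the Fourier side via \eqref{wideh}, use the polar-coordinate Plancherel identity to obtain a constant multiple of an isometry, and reconstruct $f$ from a compatible $F$ by inverting $\widehat f(\lambda\omega)=|\lambda|^{-(m-1)/2}\widetilde F(\lambda,\omega)$. The only cosmetic difference is that you deduce closedness of the range directly from the isometry, while the paper instead checks that the compatibility condition \eqref{coco} survives $L^2$ limits and then applies the inversion formula; both steps are immediate.
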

\begin{proof} When $ F = R_{\operatorname{mod}}f$, for $f \in L^2(\R^m) $, it is clear that $ \widetilde{F}(\lambda, \omega) =\widetilde{F}(-\lambda, -\omega).$
It is also easy to verify that 
$$  
\int_{\R^m}  |f(t)|^2\, dt = c \int_0^\infty \int_{\mathbb{S}^{m-1}} |R_{\operatorname{mod}}f(\xi,\omega)|^2 \,d\sigma(\omega)\, d\lambda.
$$
The  relation \eqref{wideh} allows us to reconstruct $ f $ from $ Rf(\xi,\omega) $ leading to an inversion formula for the Radon transform: 
\begin{equation*}
%\label{inver}
f(t) = (2\pi)^{-m} \int_0^\infty \int_{\mathbb{S}^{m-1}} \widetilde{Rf}(\lambda, \omega) \lambda^{m-1} \,d\sigma(\omega) \,d\lambda.
\end{equation*}

Given $ F \in L^2(\R \times \mathbb{S}^{m-1}) $ which satisfies the compatibility condition \eqref{coco},
let us define $ f $ on $ \R^m $ by  the prescription
\begin{equation}   
\label{presc}
f(t) = (2\pi)^{-m} \int_0^\infty \int_{\mathbb{S}^{m-1}} e^{i \lambda t \cdot \omega} \widetilde{F}(\lambda,\omega) \lambda^{(m-1)/2}\, d\sigma(\omega)\, d\lambda.
\end{equation} 
Then it follows that $ \widehat{f}(\lambda \omega) = \widetilde{F}(\lambda,\omega) |\lambda|^{(1-m)/2} $ for any $  (\lambda,\omega) \in \R \times \mathbb{S}^{m-1}.$ Consequently, we have
\begin{equation*}  
R ((-\Delta)^{(m-1)/4}f)(\xi,\omega) = F(\xi,\omega).
\end{equation*} 
Moreover, we also observe that
$$ 
\int_0^\infty \int_{\mathbb{S}^{m-1}} |\widehat{f}(\lambda \omega)|^2 \lambda^{m-1} \,d\sigma(\omega)\, d\lambda =  \int_0^\infty \int_{\mathbb{S}^{m-1}} |\widetilde{F}(\lambda,\omega)|^2 \,d\sigma(\omega)\, d\lambda 
$$
which simply means that $ f \in L^2(\R^m).$ Thus formula \eqref{presc} provides us with an inversion formula for the modified Radon transform. 

Finally, we can show that the image is a closed subspace. To see this, let $ F_n = R_{\operatorname{mod}}f_n $ converge to $ F $ in $L^2(\R \times \mathbb{S}^{m-1}) .$  In view of the inversion formula, we only need to verify the compatibility condition \eqref{coco}. Under the assumption, $ \widetilde{F_n}(\lambda,\omega) $ converges to $ \widetilde{F}(\lambda,\omega)$ in $ L^2(\R \times \mathbb{S}^{m-1})$. Since $ \widetilde{F_n}(\lambda,\omega) = |\lambda|^{(m-1)/2} \widehat{f_n}(\lambda \omega)  = \widetilde{F_n}(-\lambda,-\omega) ,$ it also converges to $ \widetilde{F}(-\lambda,-\omega)$. Hence the compatibility condition \eqref{coco}.
\end{proof}

In dealing with functions on the $H$-type group $ N $, we will use specific notations for the different Radon transforms introduced above. Given an integrable function $f$ on $N$ and  $\omega\in \mathbb{S}^{m-1}$, we define the (partial) Radon transform by 
$$
f_\omega(x,u,\xi)=\int_{y\cdot \omega =\xi}f(x,u,y)\,dy=\int_{k(\omega)}f(x,u,\xi\omega+\eta)\,d\eta,\qquad x,u\in \R^n, \quad \xi\in \R,
$$
where $d\eta$ is the $(m-1)$-dimensional Lebesgue measure on $k(\omega)$. 
From the definition, it follows that
$$ 
\int_{\R^m} e^{-i \lambda \omega \cdot t} f(x,u,t) dt =  \int_{-\infty}^\infty  e^{-i \lambda \xi} f_\omega(x,u,\xi) \,d\xi.
$$
The compatibility condition reads as follows: the above equation gives  
\begin{equation}
\label{cocoH} 
\widetilde{f_\omega}(x,u,\lambda) = \widetilde{f_{-\omega}}(x,u,-\lambda) \end{equation}
where $  \widetilde{f_\omega}(x,u,\lambda) $ stands for the Fourier transform of $ f_\omega(x,u,\xi) $ in the $\xi$ variable. Arguing as above, we see that if we have a family of functions $ F_\omega(x,u,\xi) $ indexed by $ \omega \in \mathbb{S}^{m-1} $   satisfying the compatibility condition 
$ \widetilde{F_\omega}(x,u,\lambda) = \widetilde{F_{-\omega}}(x,u,-\lambda)$ and for which 
$$  
\int_{\mathbb{S}^{m-1}} \int_{\He^n} |F_\omega(x,u,\xi)|^2 \,dx\, du\, d\xi \,d\sigma(\omega) < \infty ,
$$  
then the function $ f $ defined by
\begin{equation*} 
f(x,u,t) = (2\pi)^{-m}  \int_0^\infty \int_{\mathbb{S}^{m-1}} e^{i \lambda \omega \cdot t} \widetilde{F_\omega}(x,u,\lambda) \lambda^{(m-1)/2} \,d\sigma \,d\lambda 
\end{equation*}
satisfies the relation $ R_{\omega}f := ((-\Delta_t)^{(m-1)/4} f)_\omega= F_\omega$. Observe that here we denote $ R_\omega f $ for the modified partial Radon transform in the central variable.

For each $\omega$, $f_{\omega}$ can be considered as a function on $\He_{\omega}^n$.
The collection $f_{\omega}$ completely determines $f$. Moreover, it can be verified that   
$$
(f\ast g)_{\omega}=f_{\omega}\ast_{\omega}g_{\omega}
$$
for two functions $f,g\in L^1(N)$. In the above, the convolution on the left is on the group $N$ whereas $\ast_{\omega}$ on the right stands for the convolution on the Heisenberg group $\He^n_{\omega}$.
Using the above relation and the connection between $\pi_{\lambda,\omega}$ and $\pi_{\lambda}$ we can show that 
$$
\pi_{\lambda,\omega}(f)=\pi_{\lambda}(f_{\omega}), \qquad \omega\in \mathbb{S}^{m-1}, \quad \lambda>0.
$$
The function $f_{\omega}$ does not belong to $L^2(\He^n_{\omega})$, but the modified Radon transform 
\begin{equation*}
%\label{rela}
R_\omega f(x,u,\xi) = ((-\Delta_t)^{(m-1)/4} f)_\omega=  |\partial_{\xi}|^{(m-1)/2}f_\omega(x,u,\xi),
\end{equation*}
does  for almost every $ \omega$.  This  is a consequence of the easily verified relation 
 $$ 
 \int_{\R^m} |f(x,u,t)|^2 dt =  c \int_{\mathbb{S}^{m-1}}\int_0^\infty  |(\widetilde{R_\omega f})(x,u,\lambda)|^2  d\sigma(\omega)\, d\lambda. 
 $$
From  $ \pi_{\lambda,\omega}(f)=\pi_{\lambda}(f_{\omega})$,  we also obtain $
\pi_{\lambda}(R_{\omega}f)=|\lambda|^{(m-1)/2}\pi_{\lambda}(f_{\omega})$  for $ \omega\in \mathbb{S}^{m-1}$, $\lambda>0$.

%%%%%%%%%%%%%%%%%%%%%%%
\subsection{Holomorphic extensions of solutions of the extension problem in $H$-type groups}
%%%%%%%%%%%%%%%%%%%%%%%

We define the sublaplacian $\mathcal{L}_N$ on a $H$-type group $N$. We fix an orthonormal basis $X_j$, $j=1,2,\ldots,2n$ for the Lie algebra $\mathfrak{v}$ and $Z_j$, $j=1,2,\ldots, m$ for the Lie algebra $\mathfrak{z}$. We denote by $\Delta_t=\sum_{j=1}^mZ_j^2$ the ordinary Laplacian on the centre of $N$ (in $\mathbb{R}^m$). The sublaplacian $\mathcal{L}_N$ on $N$ is defined by $\mathcal{L}_N=-\sum_{j=1}^{2n}X_j^2$.  We consider the extension problem
\begin{equation}
\label{ep}
\big( -\mathcal{L}_N + \partial_\rho^2 +\frac{1-2s}{\rho} \partial_\rho +\frac{1}{4}\rho^2 \Delta_{t} \big) U(x,u,t,\rho) = 0,\qquad  U(x,u,t,0) = f(x,u,t).
\end{equation}
By denoting $L_s:= -\mathcal{L}_N + \partial_\rho^2 +\frac{1-2s}{\rho} \partial_\rho +\frac{1}{4}\rho^2 \Delta_{t} $, we have that $R_{\omega}(L_s U)=\mathcal{L}_s R_{\omega}U$, where $\mathcal{L}_s:= -\mathcal{L} + \partial_\rho^2 +\frac{1-2s}{\rho} \partial_\rho +\frac{1}{4}\rho^2 \partial_{\xi}^2$. This follows from the fact that under the Radon transform $R_{\omega}$, the sublaplacian $\mathcal{L}_N$ on $N$ becomes the sublaplacian $\mathcal{L}$, see \cite[Appendix]{Mu}, also \cite[Section 3]{MuSe}. Then, $R_{\omega}U(x,u,\xi,\rho)$ is a solution to the extension problem
\begin{equation}
\label{epr}
\big( -\mathcal{L} + \partial_\rho^2 +\frac{1-2s}{\rho} \partial_\rho +\frac{1}{4}\rho^2 \partial_{\xi}^2\big)R_{\omega}U(x,u,\xi,\rho)= 0,\qquad  R_{\omega}U(x,u,\xi,0) = R_\omega f(x,u,\xi),
\end{equation}
where $\mathcal{L}$ stands for the sublaplacian on $\He^n$.  
For $(x,u,t)\in N$ and $0<s<1$, let
\begin{equation}
\label{PhiH}
\Phi_{s,\rho}^N(x,u,t) :=\frac{2^{2m+n+s-1}\Gamma(\frac{n+1+s}{2})\Gamma(\frac{n+m+s}{2})}{\pi^{n+(m+1)/2}\Gamma(s)}\big((\rho^2+|x|^2+|u|^2)^2+16|t|^2\big)^{- \frac{n+m+s}{2}} . 
\end{equation}
We simply write $ \Phi_{s,\rho} $ when $ N = \He^n$ (and therefore $m=1$), that is exactly \eqref{Phi}.
We are in position to prove the result analogous to Theorem \ref{charH}. As usual let us employ the notation $ U_\rho(\cdot) := U(\cdot,\rho)$. 
\begin{thm} 
\label{charHtypecor}
Let $m>1$. A solution  of the extension problem  is of the form $ U(x,u,t,\rho) =  \rho^{2s} f \ast  \Phi_{s,\rho}^N(x,u,t)$ for some  $ f \in L^2(N) $ if and only if there exists $ \gamma> 0 $ such that for each $ \rho > 0$, the function $ (x,u,\xi,\omega) \rightarrow R_{\omega}U_\rho(x,u,\xi) $ extends to $ \Omega_{\gamma \rho} \times \mathbb{S}^{m-1} $ as an $ L^2(\mathbb{S}^{m-1})$-valued 
holomorphic function $ \widetilde{R_{\omega}U_\rho}(z,w,\zeta)$ and satisfies the uniform estimate 
$$  
\int_{\Omega_{\gamma \rho}} \Big( \int_{\mathbb{S}^{m-1}} | \widetilde{R_{\omega}U_\rho}(z,w,\zeta) |^2 d\sigma(\omega) \Big) \,dz\, dw\, d \zeta \leq C \rho^{2(n+1)}\|f\|_2^2
$$ 
for all $ \rho >0 $. 
\end{thm}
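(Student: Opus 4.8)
The plan is to exploit the identity $R_\omega(L_s U)=\mathcal{L}_s R_\omega U$ recorded above, which turns the extension problem \eqref{ep} on $N$ into the Heisenberg extension problem \eqref{epr} on $\He^n_\omega\cong\He^n$ for each $\omega\in\mathbb{S}^{m-1}$; one then applies Theorem \ref{charH} (together with Theorems \ref{thm:solut} and \ref{thm:general}) fibrewise and integrates over the sphere. Throughout I take $0<s\le 1/2$, as in Theorem \ref{charH}.

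For the direct implication, suppose $U_\rho=\rho^{2s}f\ast\Phi^N_{s,\rho}$ with $f\in L^2(N)$. First I would compute the partial Radon transform of the kernel: writing $t=\xi\omega+\eta$ with $\eta\in k(\omega)$, so that $|t|^2=\xi^2+|\eta|^2$, the integral defining $(\Phi^N_{s,\rho})_\omega(x,u,\xi)$ is a one-dimensional Beta integral in $|\eta|$, and the normalising constant in \eqref{PhiH} has been chosen precisely so that $(\Phi^N_{s,\rho})_\omega=\Phi_{s,\rho}$, the Heisenberg kernel \eqref{Phi}. Combining this with $(f\ast g)_\omega=f_\omega\ast_\omega g_\omega$ and with $R_\omega g=|\partial_\xi|^{(m-1)/2}g_\omega$ — the operator $|\partial_\xi|^{(m-1)/2}$ commuting past the $\lambda$-twisted convolution on the Fourier side in the central variable — gives $R_\omega U_\rho=\rho^{2s}(R_\omega f)\ast_\omega\Phi_{s,\rho}$, i.e. $R_\omega U$ solves the Heisenberg extension problem \eqref{epr} with datum $R_\omega f$, which lies in $L^2(\He^n)$ for a.e.\ $\omega$ by the Plancherel identity for the modified Radon transform. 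Theorem \ref{thm:solut} then shows that for $0<\gamma\le-1+\sqrt{2}$ the function $R_\omega U_\rho$ extends holomorphically to $\Omega_{\gamma\rho}$, belongs to $\widetilde{\mathcal{H}}(\Omega_{\gamma\rho})$, and $\|\widetilde{R_\omega U_\rho}\|^2_{\widetilde{\mathcal{H}}(\Omega_{\gamma\rho})}\le C\rho^{Q}\|R_\omega f\|^2_{L^2(\He^n)}$; integrating in $\omega$ and using $\int_{\mathbb{S}^{m-1}}\|R_\omega f\|^2_{L^2(\He^n)}\,d\sigma(\omega)=c\|f\|^2_{L^2(N)}$ yields the stated estimate (recall $Q=2(n+1)$). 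That the a.e.-$\omega$ pointwise holomorphic extensions assemble into an $L^2(\mathbb{S}^{m-1})$-valued holomorphic function is obtained by pairing against $\varphi\in L^2(\mathbb{S}^{m-1})$ and invoking Morera's theorem and Fubini, the uniform $L^2(\mathbb{S}^{m-1})$ bound making the function weakly, hence strongly, holomorphic.

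For the converse, assume $U$ solves \eqref{ep} and that for some $\gamma>0$ and every $\rho$ the map $(x,u,\xi,\omega)\mapsto R_\omega U_\rho(x,u,\xi)$ extends to an $L^2(\mathbb{S}^{m-1})$-valued holomorphic function on $\Omega_{\gamma\rho}$ obeying the uniform-in-$\rho$ bound; shrinking $\gamma$ we may take $\gamma\le-1+\sqrt{2}$. For a.e.\ $\omega$, $R_\omega U$ solves \eqref{epr}, and by the norm identity in Theorem \ref{thm:general} the hypothesis becomes
$$
\int_{\mathbb{S}^{m-1}}\Big(\int_{-\infty}^{\infty}\sum_{k=0}^{\infty}\|(R_\omega U_\rho)^\lambda\ast_\lambda\varphi_k^\lambda\|_2^2\,\Psi_k^\lambda(\gamma\rho)\,d\mu(\lambda)\Big)\,d\sigma(\omega)\le C\rho^{2(n+1)}.
$$
I would then carry out, fibrewise but keeping the $L^2(\mathbb{S}^{m-1})$-norm present, the limiting argument in the proof of Theorem \ref{charH}: extracting $\rho_j\to0$ and weak limits produces for a.e.\ $\omega$ a function $g_\omega\in L^2(\He^n)$ with $\int_{\mathbb{S}^{m-1}}\|g_\omega\|_2^2\,d\sigma(\omega)<\infty$, and the uniqueness theorem for the Heisenberg extension problem (established inside the proof of Theorem \ref{charH} via the Kummer equation) forces $R_\omega U_\rho=\rho^{2s}g_\omega\ast_\omega\Phi_{s,\rho}$ for all $\rho$. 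Since each $R_\omega U_\rho$ is a genuine modified partial Radon transform, $\widetilde{R_\omega U_\rho}(x,u,\lambda)=\widetilde{R_{-\omega}U_\rho}(x,u,-\lambda)$, and this passes to the limit: $\widetilde{g_\omega}(x,u,\lambda)=\widetilde{g_{-\omega}}(x,u,-\lambda)$. By the compatibility criterion for families indexed by $\mathbb{S}^{m-1}$ (the $H$-type version of Proposition \ref{prop:coco}) there is $f\in L^2(N)$ with $R_\omega f=g_\omega$ for a.e.\ $\omega$, whence $R_\omega U_\rho=R_\omega(\rho^{2s}f\ast\Phi^N_{s,\rho})$ for a.e.\ $\omega$; since the family $\{f_\omega\}$ determines a function on $N$, $U_\rho=\rho^{2s}f\ast\Phi^N_{s,\rho}$.

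The main obstacle is the converse. The hypothesis controls only the $\omega$-integrated norm, uniformly in $\rho$, and not the norm on each fibre uniformly in $\rho$, so the converse half of Theorem \ref{charH} cannot simply be quoted for a.e.\ fixed $\omega$; the weak-compactness and identification-of-the-limit steps, and the uniqueness argument via the confluent hypergeometric equation, must be run with the spherical $L^2$-norm carried throughout, and afterwards one must check that the compatibility condition \eqref{cocoH} survives the limiting process so that a genuine $L^2(N)$ datum can be reconstructed. By contrast the kernel identity $(\Phi^N_{s,\rho})_\omega=\Phi_{s,\rho}$, the commutation of $|\partial_\xi|^{(m-1)/2}$ with twisted convolution, and the vector-valued holomorphy are routine.
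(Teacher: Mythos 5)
Your reduction via the partial Radon transform, the kernel identity $(\Phi^N_{s,\rho})_\omega=\Phi_{s,\rho}$ (an Abel/Beta integral, exactly as in the paper), the identity $R_\omega U_\rho=\rho^{2s}(R_\omega f)\ast_\omega\Phi_{s,\rho}$, and the direct implication via Theorem \ref{thm:solut} plus integration in $\omega$ all coincide with the paper's argument. The divergence is in the converse, and you have correctly put your finger on the obstacle: the hypothesis only controls the $\omega$-integrated norm uniformly in $\rho$, so Theorem \ref{charH} cannot be invoked for a.e.\ fixed $\omega$. But your proposed remedy --- rerunning the weak-compactness, identification-of-limit and Kummer-equation uniqueness steps ``with the spherical $L^2$-norm carried throughout'' --- is left as a program rather than executed, and it is precisely there that the work lies: a genuinely vector-valued uniqueness theorem for the extension problem is not available in the paper and would have to be proved. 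The paper's device is simpler and worth internalising: expand $\widetilde{R_\omega U_\rho}$ in an orthonormal basis $\{Y_j\}$ of $L^2(\mathbb{S}^{m-1})$ and set $F^j_\rho(z,w,\zeta)=\int_{\mathbb{S}^{m-1}}\widetilde{R_\omega U_\rho}(z,w,\zeta)\,\overline{Y_j(\omega)}\,d\sigma(\omega)$. Each $F^j_\rho$ is a \emph{scalar} solution of the Heisenberg extension problem, and by Bessel's inequality it satisfies $\|F^j_\rho\|^2_{\widetilde{\mathcal{H}}(\Omega_{\gamma\rho})}\le C\rho^{2(n+1)}$ with a constant independent of $j$ and $\rho$; the scalar Theorem \ref{charH} then yields $F^j_\rho=\rho^{2s}g_j\ast_\omega\Phi_{s,\rho}$ with $g_j\in L^2(\He^n)$. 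The resummation $g^\omega=\sum_j g_j Y_j(\omega)$ is controlled by Fatou's lemma together with the two facts that $\rho^{-2(n+1)}\Psi_k^\lambda(\gamma\rho)\to C_\gamma$ as $\rho\to 0$ and that $\rho^{2s}g_j\ast\Phi_{s,\rho}\to g_j$ in $L^2$, giving $\sum_j\|g_j\|_2^2<\infty$. From that point your outline (compatibility condition \eqref{cocoH} passing to the limit, reconstruction of $f\in L^2(N)$ via Proposition \ref{prop:coco}, and the kernel identity) agrees with the paper. So: same skeleton, but you should replace the vector-valued limiting argument in your converse by the orthonormal-basis projection, which reduces everything to the already-proved scalar statement and avoids having to reprove uniqueness in a Bochner-space setting.
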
 

\begin{proof}
Analogously as in Theorem \ref{charH}, the direct part follows from Theorem \ref{thm:solut}. Indeed, since the modified Radon transform $R_{\omega}U_\rho(x,u,\xi)$ is a solution to the extension problem \eqref{epr}, Theorem~\ref{thm:solut} implies that $\big\|\|\widetilde{R_{\omega}U_\rho}\|_{L^2(\mathbb{S}^{m-1})}\big\|_{\Omega_{\gamma \rho}}\le C\rho^{(n+1)}\|f\|_2$.

Let us prove the converse.  As $ U $ is a solution of the extension problem for $ \mathcal{L}_N,$ it follows that  for each $\omega \in \mathbb{S}^{m-1}$, the modified Radon transform $R_{\omega}U_\rho(x,u,\xi) $ is a solution to \eqref{epr} which has a holomorphic extension $ \widetilde{R_{\omega}U_\rho}(z,w,\zeta)$  to the domain $ \Omega_{\gamma \rho}$. The hypothesis of the theorem says  that $ \omega \rightarrow  \widetilde{R_{\omega}U_\rho}(\cdot)$ is an $ L^2(\mathbb{S}^{m-1}) $ function taking values in $ \widetilde{\mathcal{H}}(\Omega_{\gamma \rho}) $. From this we need to conclude that there exists an $ L^2(\mathbb{S}^{m-1}) $ function $ g^\omega $ taking values in $ L^2(\He^n) $ such that 
$R_{\omega}U_\rho(x,u,\xi) = \rho^{2s} g^{\omega}\ast_{\omega} \Phi_{s,\rho}(x,u,\xi) $.

To this end, let $ Y_j$, $j \in \mathbb{N} $ be an orthonormal basis for $ L^2(\mathbb{S}^{m-1}) $ and consider 
$$  
F_\rho^j(z,w,\zeta) = \int_{\mathbb{S}^{m-1}}  \widetilde{R_\omega U_\rho}(z,w,\zeta) \overline{Y_j(\omega)} \,d\sigma(\omega).
$$
As $  F_\rho^j(x,u,\xi)$ is a solution of the extension problem on $ \He^n $ having a holomorphic extension to $ \Omega_{\gamma \rho} $  satisfying the required estimates, we can conclude, by Theorem \ref{charH}, that $  F_\rho^j = \rho^{2s} g_j \ast_{\omega} \Phi_{s,\rho}$  for some $ g_j \in L^2(\He^n)$. Now
$$ 
\widetilde{R_\omega U_\rho}(z,w,\zeta) = \sum_{j=0}^\infty  F_\rho^j(z,w,\zeta) Y_j(\omega) 
$$
where the series converges in $ L^2\big(\Omega_{\gamma \rho}, L^2(\mathbb{S}^{m-1})\big)$. From this fact, we then claim that the series 
$ \sum_{j=0}^\infty  g_j(x,u,\xi) Y_j(\omega)$ converges in $  L^2(\He^n, L^2(\mathbb{S}^{m-1}))$. Indeed, we are given that 
$$   
\sum_{j=0}^\infty  \Big( \int_{\Omega_{\gamma \rho}} |F_\rho^j(z,w,\zeta)|^2 \,dz\, dw\, d\zeta \Big) \leq C \rho^{2(n+1)}.
$$
Since $ F_\rho^j = \rho^{2s} g_j \ast_{\omega} \Phi_{s,\rho} ,$ in view of  Theorem \ref{thm:general} we have
\begin{equation} 
\label{ineqF}
  \sum_{j=0}^\infty  \int_{-\infty}^\infty  \Big( \sum_{k=0}^\infty \|  \rho^{2s}  g_j^\lambda \ast_\lambda \Phi_{s,\rho}^\lambda \ast_\lambda \varphi_k^\lambda\|_2^2 \,\, \Psi_k^\lambda(\rho) \Big) d\mu(\lambda) \leq C \rho^{2(n+1)}.
\end{equation}
Observe that $ \rho^{-2(n+1)} \Psi_k^\lambda(\gamma \rho) $ converges to $ C_\gamma  \psi_k^\lambda(0) = C_\gamma $ as $ \rho $ goes to $ 0$. Moreover, as $ \rho^{2s} g_j \ast_{\omega}  \Phi_{s,\rho} $ converges to $ g_j $ in $ L^2(\He^n)$ (because $\rho^{2s} \Phi_{s,\rho}$ is an approximate identity, see \cite{RTimrn}) we get that 
$$ 
\|  \rho^{2s}  g_j^\lambda \ast_\lambda \Phi_{s,\rho}^\lambda \ast_\lambda \varphi_k^\lambda\|_2^2  \rightarrow \|  g_j^\lambda \ast_\lambda \varphi_k^\lambda\|_2^2.
$$ 
Using Fatou's lemma in \eqref{ineqF} along with these two observations we conclude that
$$  
\sum_{j=0}^\infty  \| g_j\|_2^2 = c  \sum_{j=0}^\infty  \int_{-\infty}^\infty  \Big( \sum_{k=0}^\infty \|    g_j^\lambda  \ast_\lambda \varphi_k^\lambda\|_2^2  \Big) d\mu(\lambda) \leq C. 
$$
This proves that the series 
$ \sum_{j=0}^\infty  g_j(x,u,\xi) Y_j(\omega)$ converges in $  L^2(\He^n, L^2(\mathbb{S}^{m-1}))$ to a function which we denote by $ g^\omega(x,u,\xi)$. We have thus proved $ R_\omega U_\rho = \rho^{2s} g^\omega \ast_{\omega} \Phi_{s,\rho} $. As $ R_\omega U_\rho $ converges to $ g^\omega $ it follows that
$$  
\int_{-\infty}^\infty  \int_{\He^n} e^{-i\lambda \xi} R_\omega U_\rho(x,u,\xi) \,dx \,du\, d\xi \rightarrow  \int_{-\infty}^\infty  \int_{\He^n} e^{-i\lambda \xi} g^\omega(x,u,\xi) \,dx\, du\, d\xi.
$$
This allows us to conclude that (as in the proof of Proposition \ref{prop:coco}) $ g^\omega $ satisfies the compatibility condition \eqref{cocoH} and hence we can get $ f \in L^2(N) $ such that $ R_\omega f = g^\omega$. Consequently, $ R_\omega U_\rho = \rho^{2s}  R_\omega f \ast_{\omega} \Phi_{s,\rho}.$ If we can show that $ (\Phi_{s,\rho}^N)_\omega = \Phi_{s,\rho} $ then we have $ R_\omega U_\rho = \rho^{2s}  R_\omega f \ast_{\omega} (\Phi_{s,\rho}^N)_\omega$, which will prove the theorem by the inversion formula for the Radon transform.

Thus we need to show $ (\Phi_{s,\rho}^N)_\omega = \Phi_{s,\rho} $. The Radon transform of a radial function is radial, and there is a formula to compute the Radon transform in this case, see e.g. \cite{M}. Since $\Phi_{s,\rho}^N(x,u,t)$ is a radial function of $t\in \R^m$, we have, for $m>1$,
\begin{align*}
&\int_{ y \cdot \omega =\xi} ((\rho^2+|x|^2+|u|^2)^2+16|y|^2)^{-\frac{(n+m+s)}{2}}\,dy\\
&\quad = \frac{2\pi^{(m-1)/2}}{\Gamma((m-1)/2)}\int_{|\xi|}^{\infty}((\rho^2+|x|^2+|u|^2)^2+16r^2)^{-\frac{(n+m+s)}{2}}(r^2-\xi^2)^{\frac{m-3}{2}}r\,dr\\
 &\quad =
c_{n,m,s}((\rho^2+|x|^2+|u|^2)^2+16 \xi^2)^{-\frac{(n+1+s)}{2}}=\frac{\pi^{n+(m+1)/2}\Gamma(s)}{2^{2m+n+s-1}\Gamma(\frac{n+1+s}{2})\Gamma(\frac{n+m+s}{2})}\Phi_{s,\rho}(x,u,\xi).
\end{align*}
where $
c_{n,m,s}=\frac{\pi^{\frac{m-1}{2}}}{4^{m-1}}\frac{\Gamma(\frac{n+1+s}{2})}{\Gamma(\frac{n+m+s}{2})}$ (we computed an Abel type integral). In view of \eqref{PhiH}, we get that 
$(\Phi_{s,\rho}^N)_{\omega}(x,u,\xi)=\Phi_{s,\rho}(x,u,\xi)$.

The proof is complete.
\end{proof}

%%%%%%%%%%%%%%%%%%%%%%%%%%%%%%%%%%%%
 \subsection{Holomorphic extensions of eigenfunctions on solvable extensions of $H$-type groups} 
%%%%%%%%%%%%%%%%%%%%%%%%%%%%%%%%%%%%

Analogously as in the case of the Heisenberg group $\He^n$, we will observe the connection between the solutions of the extension problem \eqref{ep}
and the eigenfunctions of the Laplace--Beltrami operator $\Delta_S$ on the solvable extension $S$ of the $H$-type group $N$, and we will deduce holomorphic properties of these eigenfunctions from Theorem \ref{charHtypecor}.

Let us extend to $H$-type groups the concepts that we introduced in Section \ref{complex}.  Recall that the $H$-type group $N$ admits nonisotropic dilations. Thus there is an action of $A=\R^+$ on $N$. We can therefore form the semidirect product of $N$ and $A$ which is usually denoted by $S= NA$. This group $S$ is solvable and when $N$ is an Iwasawa group coming out of a semisimple Lie group, $S$ can be identified with a noncompact Riemannian symmetric space of rank one. A basis for the Lie algebra $ \mathfrak{s} $ of $S$ is given by $ E_j = \sqrt{\rho}X_j$, $j=1,2,...,2n$, $T_k = \rho Z_k$, $k =1,2,...,m $ and $ H = \rho \partial_\rho$. The Laplace--Beltrami operator $\Delta_S$ (by using the same notation as in Section \ref{complex}) on $S$ is defined by  
\begin{equation*}  
%\label{eq:LBel}
\Delta_S =  \sum_{j=1}^{2n} E_j^2 +\sum_{k=1}^m T_k^2 +H^2 -\frac{1}{2}Q H
\end{equation*} 
where $ Q = 2(n+m) $ is the homogeneous dimension of $ N$. It can be shown that $U$ is a solution of the equation
$$
\big( -\mathcal{L}_N + \partial_\rho^2 +\frac{1-2s}{\rho} \partial_\rho +\frac{1}{4}\rho^2 \Delta_{t} \big) U(x,u,t,\rho) = 0,
$$
 if and only if the function 
$$
\mathbb{W}(x,u,t,\rho)=\rho^{\frac{n+m-s}{2}}U(2^{-1/2}(x,u),2^{-1}t,\sqrt{2\rho})
$$
satisfies the eigenfunction equation
$$ 
- \Delta_S \mathbb{W}(x,u,t,\rho) = \gamma(n+m-\gamma)\mathbb{W}(x,u,t,\rho),\quad \gamma=\frac{1}{2}(n+m+s),\quad s>0.
$$
Moreover we can write, for any $ f \in L^2(N) $,
$$
 \mathbb{W}(x,u,t,\rho) =  2^s \rho^{\frac{n+m+s}{2}} f \ast \Phi_{s, \sqrt{2\rho}}^N(2^{-1/2}(x,u),2^{-1}t). 
  $$
By taking modified Radon transform on both sides of the identity, we have that, for each $\omega\in \mathbb{S}^{m-1}$
$$
R_{\omega} \mathbb{W}(x,u,\xi,\rho)= 2^s \rho^{\frac{n+m+s}{2}}2^{n+m+s}2^{-(n+1+s)}R_{\omega}f \ast_{\omega} \Phi_{s, \sqrt{2\rho}}(2^{-1/2}(x,u),2^{-1}\xi),
$$
since 
\begin{equation}
\label{wPhi}
( \Phi_{s, \sqrt{2\rho}}^N(2^{-1/2}(x,u),2^{-1}t))_{\omega}=2^{m-1} \Phi_{s, \sqrt{2\rho}}(2^{-1/2}(x,u),2^{-1}\xi).
\end{equation}
We will use the notation $\mathbb{W}_{\rho}(x,u,t):=\mathbb{W}(x,u,t,\rho)$. By Theorem \ref{charHtypecor}, the function $ R_{\omega} \mathbb{W}_\rho $ extends to $ \Omega_{\gamma \sqrt{2\rho}} \times \mathbb{S}^{m-1} $ as an $ L^2(\mathbb{S}^{m-1})$-valued 
holomorphic function $ \widetilde{R_{\omega}\mathbb{W}_\rho}(z,w,\zeta)$ and satisfies the uniform estimate 
$$  
\int_{\Omega_{\gamma \sqrt{2\rho}}} \Big( \int_{\mathbb{S}^{m-1}} | \widetilde{R_{\omega}\mathbb{W}_\rho}(z,w,\zeta) |^2 d\sigma(\omega) \Big) \,dz\, dw\, d \zeta \leq C  \rho^{2n+m+1-s}\|f\|_2^2
$$ 
for all $ \rho >0 $. 
 
Let us restate this estimate in terms of  Poisson integrals on the solvable group $ S$. We define
 $$ 
 \varphi_{s,\delta}^N(x,u,t) = \big((\delta+\frac{1}{4}|(x,u)|^2)^2+|t|^2\big)^{-\frac{n+m+s}{2}}
 $$ 
Observe that $ \Phi_{s,\rho}^N $ can be written in terms of $ \varphi_{s,\delta}^N $ as
\begin{equation*} 
\Phi_{s,\rho}^N(x,u,t) =\frac{2^{-(n+1+s)}\Gamma(\frac{n+1+s}{2})\Gamma(\frac{n+m+s}{2})}{\pi^{n+(m+1)/2}\Gamma(s)}\varphi_{s,\delta}^N(x,u,t)
 \end{equation*}
 with $ \delta = \frac{1}{4}\rho^2$. In other words, 
 \begin{equation}
 \label{relaN}
 \Phi_{s,\sqrt{2\rho}}^N(2^{-1/2}x,2^{-1/2}u,2^{-1}t) = \frac{2^{-(1-m)}\Gamma(\frac{n+1+s}{2})\Gamma(\frac{n+m+s}{2})}{\pi^{n+(m+1)/2}\Gamma(s)}   \varphi_{s,\rho}^N(x,u,t).
 \end{equation}
 From \eqref{wPhi}, \eqref{relaN} and \eqref{relH}, we have that
 \begin{align}
 \label{varphis}
 \notag \frac{2^{m-1}\Gamma(\frac{n+1+s}{2})\Gamma(\frac{n+m+s}{2})}{\pi^{n+(m+1)/2}\Gamma(s)}  ( \varphi_{s,\rho}^N(x,u,t))_{\omega}&=2^{m-1} \Phi_{s, \sqrt{2\rho}}(2^{-1/2}(x,u),2^{-1}\xi)\\
  &=2^{m-1}\frac{\Gamma(\frac{n+1+s}{2})^2 }{ \pi^{n+1}\Gamma(s)}   \varphi_{s,\rho}(x,u,\xi).
 \end{align}
 Now we define $ g(x,u,t) :=  f(2^{-1/2}x,2^{-1/2}u,2^{-1}t)$. A calculation shows that
\begin{equation*}
 \mathbb{W}(x,u,t,\rho) = 2^{-(n+m-s)} \rho^{\frac{n+m+s}{2}} \frac{2^{-(1-m)}\Gamma(\frac{n+1+s}{2})\Gamma(\frac{n+m+s}{2})}{\pi^{n+(m+1)/2}\Gamma(s)}  g\ast \varphi_{s,\rho}^N(x,u,t).
\end{equation*}

The kernels $\varphi_{i\lambda,\rho}^N(x,u,t)$ defined for $\lambda\in \C$ are the generalised Poisson kernels and they occur in the definition of the Helgason Fourier transform on the solvable group $S$. This suggests that the right Poisson transform for the solvable extension $S$ of $N$ is given by
 $$ 
 \mathcal{P}_\lambda^N f(x,u,t,\rho)  =\rho^{\frac{n+m-i\lambda}{2}}  \big( \varphi_{i\lambda,1}^N(\cdot)f(\cdot)\big)\ast \varphi_{-i\lambda,\rho}^N(x,u,t) 
 $$
 where $ f $ is a function on $N$. 

In view of the description above, from Theorem \ref{charHtypecor} we can deduce as a corollary the following characterisation of certain eigenfunctions of $ \Delta_S $ expressible as Poisson transforms of functions on $N$.

  \begin{thm}
  \label{charEH}  
  Let $ 0 < s < 1$ and $m>1$. An eigenfunction $ \mathbb{W}$ of the Laplace-Beltrami operator $ \Delta_S $ on $ S $ with eigenvalue $ -\frac{1}{4}((n+m)^2-s^2) $  can be expressed as the Poisson integral $ \mathcal{P}_{is}^Nf $ with $ f \in L^2(N, (\varphi_{0,1}^N(h))^2 dh) $ if and only if  there exists a $ \gamma> 0 $ such that the function $ (x,u,\xi,\omega) \rightarrow R_{\omega}\mathbb{W}_\rho(x,u,\xi) $ extends to $ \Omega_{\gamma \sqrt{\rho}} \times \mathbb{S}^{m-1} $ as an $ L^2(\mathbb{S}^{m-1})$-valued 
holomorphic function $ \widetilde{R_{\omega}\mathbb{W}_\rho}(z,w,\zeta)$ and satisfies the uniform estimate 
$$  
\int_{\Omega_{\gamma \sqrt{\rho}}} \Big( \int_{\mathbb{S}^{m-1}} |  \widetilde{R_{\omega}\mathbb{W}_\rho}(z,w,\zeta) |^2 d\sigma(\omega) \Big) \,dz\, dw\, d \zeta \leq C   \rho^{2n+m+1-s}
$$ 
for all $ \rho >0 $.  
  \end{thm}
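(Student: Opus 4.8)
The plan is to deduce this statement from Theorem \ref{charHtypecor} in precisely the way Theorem \ref{charE} was obtained from Theorem \ref{charH} in the Heisenberg case, by running it through the dictionary assembled in the paragraphs immediately preceding the statement. Three ingredients are needed. First, the bijection
$$\mathbb{W}(x,u,t,\rho)=\rho^{\frac{n+m-s}{2}}U\big(2^{-1/2}(x,u),2^{-1}t,\sqrt{2\rho}\big)$$
between eigenfunctions $\mathbb{W}$ of $\Delta_S$ with eigenvalue $-\frac{1}{4}((n+m)^2-s^2)$ and solutions $U$ of the extension problem \eqref{ep}. Second, Theorem \ref{charHtypecor}, which characterises those $U$ of the form $\rho^{2s}h\ast\Phi_{s,\rho}^N$ with $h\in L^2(N)$ by the $L^2(\mathbb{S}^{m-1})$-valued holomorphic extendability of $R_\omega U_\rho$ and the bound $C\rho^{2(n+1)}\|h\|_2^2$. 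Third, the kernel identities \eqref{wPhi}, \eqref{relaN}, \eqref{relH}, \eqref{varphis} together with the definition of $\mathcal{P}_\lambda^N$, which translate the $\Phi_{s,\rho}^N$-presentation of $U$ into a Poisson integral $\mathcal{P}_{is}^N f$ at the cost of a fixed dilation $(x,u,t)\mapsto(2^{-1/2}(x,u),2^{-1}t)$ and a multiplication of $f$ by a power of $\varphi_{0,1}^N$; consequently $h\in L^2(N)$ if and only if $f$ lies in the weighted space $L^2(N,(\varphi_{0,1}^N(h))^2\,dh)$ of the statement.

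For the forward implication I would start from $\mathbb{W}=\mathcal{P}_{is}^N f$ with $f$ in that weighted space. Unwinding the definitions above, the solution $U$ attached to $\mathbb{W}$ is exactly $\rho^{2s}h\ast\Phi_{s,\rho}^N$ with $h\in L^2(N)$; Theorem \ref{charHtypecor} then provides the $L^2(\mathbb{S}^{m-1})$-valued holomorphic extension $\widetilde{R_\omega U_\rho}$ on $\Omega_{\gamma\rho}$ with norm bounded by $C\rho^{2(n+1)}\|h\|_2^2$. Since the modified Radon transform $R_\omega$ acts only in the $(x,u,\xi)$-variables and thus commutes with holomorphic extension in $(z,w,\zeta)$, replacing the parameter $\rho$ by $\sqrt{2\rho}$, inserting the prefactor $\rho^{\frac{n+m-s}{2}}$, and performing the induced complex dilation on $\C^{2n+1}$ turns this into the claimed holomorphic extension of $(x,u,\xi,\omega)\mapsto R_\omega\mathbb{W}_\rho(x,u,\xi)$ to $\Omega_{\gamma'\sqrt{\rho}}\times\mathbb{S}^{m-1}$ together with the bound $C\rho^{2n+m+1-s}$ — this is exactly the computation already indicated just before the theorem statement. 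For the converse I would run the chain backwards: from the hypothesised holomorphic extension and estimate for $R_\omega\mathbb{W}_\rho$ one undoes the reparametrisation and the dilation to obtain a solution $U$ of \eqref{ep} whose modified Radon transforms extend holomorphically to $\Omega_{\gamma''\rho}$ and satisfy $\int_{\Omega_{\gamma''\rho}}\int_{\mathbb{S}^{m-1}}|\widetilde{R_\omega U_\rho}|^2\,d\sigma\,dz\,dw\,d\zeta\le C\rho^{2(n+1)}$; Theorem \ref{charHtypecor} yields $h\in L^2(N)$ with $U(\cdot,\rho)=\rho^{2s}h\ast\Phi_{s,\rho}^N$, and retracing \eqref{relaN}, \eqref{relH}, \eqref{varphis} and the definition of $\mathcal{P}_{is}^N$ identifies $\mathbb{W}$ with $\mathcal{P}_{is}^N f$ for an $f$ in the required weighted $L^2(N)$-space. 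The uniqueness of the solution of \eqref{ep} with a prescribed datum, needed here, is already built into Theorem \ref{charHtypecor}.

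The only genuinely delicate point is the bookkeeping of the three coupled rescalings — the spatial dilation $(x,u,t)\mapsto(2^{-1/2}(x,u),2^{-1}t)$, the parameter change $\rho\mapsto\sqrt{2\rho}$, and the prefactor $\rho^{\frac{n+m-s}{2}}$ — which must combine with the Jacobian $2^{-(2n+2)}$ of the induced complex dilation on $\C^{2n+1}$ so that the exponent $2(n+1)$ supplied by Theorem \ref{charHtypecor} is converted into the exponent $2n+m+1-s$ that appears here, and so that the image of $\Omega_{\gamma\sqrt{2\rho}}$ under this dilation is again a tube of the form $\Omega_{\gamma'\sqrt{\rho}}$. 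The latter is automatic, since $\Omega_r$ is homogeneous of degree one under the Heisenberg dilations $\delta_r$ and the statement only asks for the existence of some $\gamma'>0$; the former is a routine power count. Everything else is a mechanical transcription of Theorem \ref{charHtypecor}, so once this accounting is carried out the theorem follows.
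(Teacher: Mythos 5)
Your proposal is correct, and it follows the route that the paper's own preamble advertises (``from Theorem \ref{charHtypecor} we can deduce as a corollary\dots''): pass from the eigenfunction $\mathbb{W}$ to the extension-problem solution $U$ on $N$, invoke Theorem \ref{charHtypecor}, and translate $U=\rho^{2s}h\ast\Phi^N_{s,\rho}$ back into a Poisson integral via the kernel identities \eqref{wPhi}, \eqref{relaN}, \eqref{varphis}. The paper's written proof takes the other path around the same square: it Radon-transforms the Poisson integral itself, establishing the intertwining relation $R_\omega(\mathcal{P}^N_\lambda f)(h,\rho)=c_{n,m,s}\,\rho^{(m-1)/2}\,\mathcal{P}_\lambda(g^\omega_\lambda)(h,\rho)$ with $g^\omega_\lambda:=\varphi_{i\lambda,1}^{-1}R_\omega(\varphi^N_{i\lambda,1}f)$, and then applies the Heisenberg eigenfunction theorem (Theorem \ref{charE}) fiberwise in $\omega$. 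Both reductions ultimately rest on the same Heisenberg machinery (Theorem \ref{charE} is itself deduced from Theorem \ref{charH}, and Theorem \ref{charHtypecor} reduces to Theorem \ref{charH} by the Radon transform); the paper's route has the virtue of exhibiting the Heisenberg Poisson datum $g^\omega_\lambda$ explicitly, while yours has the virtue of delegating to Theorem \ref{charHtypecor} precisely the points that the paper's two-line proof leaves implicit in the converse direction --- the $L^2(\mathbb{S}^{m-1})$-valued holomorphy, the compatibility condition \eqref{cocoH}, and the reconstruction of a single $f$ on $N$ from the family indexed by $\omega$. Your power count converting the exponent $2(n+1)$ of Theorem \ref{charHtypecor} into $2n+m+1-s$ checks out ($\rho^{2n+m+1-s}\cdot\rho^{-(n+m-s)}=\rho^{n+1}=c\,(\sqrt{2\rho})^{2(n+1)}$), and the identification of the weight on $f$ inherits exactly the same bookkeeping conventions as the paper's own statement, so nothing further is owed there.
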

  \begin{proof} 
  When $ f \in L^2(N, (\varphi_{0,1}^N(h))^2 dh) $ we know that $ R_\omega  ( \varphi_{i\lambda,1}^N f) \in L^2(\He^n) $ and so the function $ g_\lambda^\omega(h) := \varphi_{i\lambda,1}(h)^{-1} R_\omega  (\varphi_{i\lambda,1}^N f)(h)  \in L^2(\He^n, \varphi_{0,1}(h)^2 dh)$.  Therefore, in view of \eqref{Poisson} and \eqref{varphis} $R_\omega( \mathcal{P}_\lambda^N f)(h,\rho) =  c_{n,m,s}\rho^{(m-1)/2} \mathcal{P}_\lambda (g_\lambda^\omega) (h,\rho)$. Hence the theorem follows from the corresponding result for the Heisenberg group, namely Theorem~\ref{charE}.
  \end{proof}
 
\subsection*{Acknowledgments}

L. Roncal is supported by the Basque Government through the BERC 2018-2021 program, by the Spanish Ministry of Economy and Competitiveness MINECO: BCAM Severo Ochoa excellence accreditation SEV-2017-2018  and through project MTM2017-82160-C2-1-P funded by (AEI/FEDER, UE) and acronym ``HAQMEC''. She also acknowledges the RyC project RYC2018-025477-I and IKERBASQUE. S. Thangavelu is supported by  J. C. Bose Fellowship from D. S. T., Government of India.

%%%%%%%%%%%%%%%%%%%%%%%%%%%%%%%%%%%%%%%%%%%%%%%%%%%%%%

\end{document}